\definecolor{LinkColor}{rgb}{0,0,1}
\definecolor{LinkColor2}{rgb}{0,0.5,0}
\definecolor{lg}{rgb}{.5,.5,.5}
\definecolor{rosso}{rgb}{0.75,0,0}
\numberwithin{equation}{section}  
\renewcommand{\@seccntformat}[1]{\csname the#1\endcsname.\hspace{1ex}}
\newtheorem{Thm}{Theorem}[section]
\newtheorem{Lem}[Thm]{Lemma}
\newtheorem{Pro}[Thm]{Proposition}
\newtheorem{Def}[Thm]{Definition}
\theoremstyle{definition}
\newtheorem{Rem}[Thm]{Remark} 
\newtheorem{Claim}[Thm]{Claim} 
\renewcommand\paragraph{\@startsection{paragraph}{4}{\z@}%
    {1ex \@plus1ex \@minus.2ex}%
    {-1em}%
    {\normalfont\normalsize\bfseries}}
\renewcommand\subparagraph{\@startsection{paragraph}{4}{\z@}%
    {1ex \@plus1ex \@minus.2ex}%
    {-1em}%
    {\normalfont\normalsize\itshape}}
\newlength{\bibitemsep}\setlength{\bibitemsep}{.2\baselineskip plus .05\baselineskip minus .05\baselineskip}
\newlength{\bibparskip}\setlength{\bibparskip}{0pt}
\let\oldthebibliography\thebibliography
\renewcommand\thebibliography[1]{%
  \oldthebibliography{#1}%
  \setlength{\parskip}{\bibitemsep}%
  \setlength{\itemsep}{\bibparskip}%
}
\newcommand{\HH}{\mathcal{H}}
\newcommand{\abs}[1]{\left\vert#1\right\vert}
\newcommand{\bs}{\B{s}}
\newcommand{\B}[1]{\boldsymbol{#1}}
\newcommand{\BE}[2]{\left\langle\mathcal{E}\left(#1\right),\mathcal{E}\left(#2\right)\right\rangle_{\mathbb{C}(\B{\varphi})}}
\newcommand{\tG}{\tilde{\B{\mathcal{G}}}^{\B{m}}}
\newcommand{\Hd}{H^1_D(\Omega;\mathbb{R}^d)}
\newcommand{\HL}{H^1(\Omega;\mathbb{R}^N)\cap L^{\infty}(\Omega;\mathbb{R}^N)}
\newcommand{\HN}{H^1(\Omega;\mathbb{R}^N)}
\newcommand{\LN}[1]{L^{#1}(\Omega;\mathbb{R}^N)}
\newcommand{\LuN}{L^{\infty}(\Omega;\mathbb{R}^N)}
\newcommand{\LzS}{L^2_{T\Sigma}(\Omega;\mathbb{R}^N)}
\newcommand{\norm}[1]{\left\Vert #1\right\Vert}
\newcommand{\pd}{\B{\varphi}_{\delta}}
\newcommand{\rsp}[3]{\left(#1,#2\right)_{\rho(\B{#3})}}
\newcommand{\bphi}{{\B\varphi}}
\newcommand{\bx}{{\B x}}
\newcommand{\bw}{{\B w}}
\newcommand{\bW}{\mathbf{W}}
\newcommand{\R}{\mathbb{R}}
\newcommand{\N}{\mathbb{N}}
\newcommand{\E}{\mathcal{E}}
\newcommand{\C}{\mathbb{C}}
\newcommand{\eps}{\varepsilon}
\newcommand{\bnorm}[1]{\big\| #1 \big\|}
\newcommand{\babs}[1]{\ensuremath\big|#1 \big|}
\newcommand{\suchthat}{\;\ifnum\currentgrouptype=16 \middle\fi|\;}
\DeclareMathOperator{\trace}{tr}
\DeclareMathOperator{\sign}{sign}
\begin{document}

%%%%%%%%%%%%%%%%%%%%%%%%%%%%%%%%%%%%
%%%%%%%%%% TITLEPAGE %%%%%%%%%%%%%%%
%%%%%%%%%%%%%%%%%%%%%%%%%%%%%%%%%%%%
\begin{titlepage}
	\begin{addmargin}{0.25in}
		\begin{center}
		\LARGE{\scshape 
		Sharp-Interface Limit
		\\of a Multi-Phase Spectral 
		\\Shape Optimization Problem 
		\\for Elastic Structures
		}\\
		\rmfamily\mdseries
		\vspace{0.03\paperheight} 
		%%%%%
		\normalsize
		\textsc{Harald Garcke$^1$, Paul H\"uttl$^1$, Christian Kahle$^2$ and Patrik Knopf$^1$}\\
		\vspace{0.02\paperheight} 
		\begin{minipage}[t]{0.75\textwidth}
		\scriptsize\flushleft
		$^1$\textit{Fakul\"at f\"ur Mathematik, Universit\"at Regensburg, 93053 Regensburg, Germany},\\
		$\quad$
		\href{mailto:harald.garcke@ur.de}{harald.garcke@ur.de},
		\href{mailto:paul.huettl@ur.de}{paul.huettl@ur.de},
		\href{mailto:patrik.knopf@ur.de}{patrik.knopf@ur.de}.\\[1ex]
		$^2$\textit{Mathematisches Institut, Universit\"at Koblenz, 56070~Koblenz, Germany},\\
		$\quad$
		\href{mailto:christian.kahle@uni-koblenz.de}{christian.kahle@uni-koblenz.de}.
		\end{minipage}
		\\
		\vspace{0.02\paperheight} 
		\scriptsize
		\color{white}
		{
			\textit{This is a preprint version of the paper: Please cite as:} \\  
			H. Garcke, P. H\"uttl, Christian Kahle and P. Knopf, [Journal] (2023) \\ 
			\texttt{https://doi.org/...}
		}
		\end{center}
		\vspace{0.02\paperheight} 
		\footnotesize
		\textsc{Abstract.}
		\normalfont
		We consider an optimization problem for the eigenvalues of a multi-material elastic structure that was previously introduced by \textsc{Garcke et al.}
		[\textit{Adv.~Nonlinear Anal.}~11 (2022), no.~1, 159--197]. There, the elastic structure is represented by a vector-valued phase-field variable,
		and a corresponding optimality system consisting of a state equation and a gradient inequality was derived. In the present paper, we pass to the sharp-interface limit in this optimality system by the technique of formally matched asymptotics. Therefore, we derive suitable Lagrange multipliers to formulate the gradient inequality as a pointwise equality.
		Afterwards, we introduce inner and outer expansions, relate them by suitable matching conditions and formally pass to the sharp-interface limit by comparing the leading order terms in the state equation and in the gradient equality. Furthermore, the relation between these formally derived first-order conditions and results of \textsc{Allaire \& Jouve} [\textit{Comput.~Methods Appl.~Mech.~Engrg.}, 194 (2005), pp.~3269--3290] obtained in the framework of classical shape calculus is discussed. Eventually, we provide numerical simulations for a variety of examples. In particular, we illustrate the sharp-interface limit and also consider a joint optimization problem of simultaneous compliance and eigenvalue optimization.
        \\[1ex]
		\textsc{Keywords.} Shape and topology optimization; structural optimization; eigenvalue problem; sharp-interface limit; formally matched asymptotics; phase-field models; linear elasticity.
		\\[1ex]
		\textsc{AMS Subject Classifications.} 
		35C20, %%% Asymptotic expansions of solutions to PDEs
		35P05, %%% General topics in linear spectral theory for PDEs
		35R35, %%% Free boundary problems for PDEs
		49Q10, %%% Optimization of shapes other than minimal surfaces 
		49R05, %%% Variational methods for eigenvalues of operators
		74B05, %%% Classical linear elasticity
		74P05, %%% Compliance or weight optimization in solid mechanics
		74P15. %%% Topological methods for optimization problems in solid mechanics
	\end{addmargin}
\end{titlepage}

%%%%%%%%%%%%%%%%%%%%%%%%%%%%%%%%%%%%
%%%%%%%%%% DOCUMENT %%%%%%%%%%%%%%%%
%%%%%%%%%%%%%%%%%%%%%%%%%%%%%%%%%%%%

\bigskip
\normalsize
\setlength{\parskip}{1ex}
\setlength{\parindent}{0ex}
% \allowdisplaybreaks
\normalsize

%%%%%%%%%%%%%%%%%%%%%%%%%%%%%%%%%%%%
%%%%%%%%%%%%INTRODUCTION%%%%%%%%%%%%
%%%%%%%%%%%%%%%%%%%%%%%%%%%%%%%%%%%

\section{Introduction}\label{SEC:Intro}

The goal of structural shape and topology optimization is to find the optimal distribution of materials in a prescribed region, the so-called design domain.
Here, in addition to pure shape optimization, also the topology of the structure is to be optimized. This includes the formation of holes (void regions) in the structure as well as the merging and splitting of connected material components.
In many applications, certain properties of the materials (such as their elastic properties) as well as additional side conditions (e.g., volume constraints or support conditions) need to be taken into account within the optimization problem.

Besides the optimization of shape and topology, the optimization of eigenvalues is an important task in engineering science to make structures robust against vibrations. 
It has been observed that structures are less susceptiple against vibrations if their principal eigenvalue is large, see \cite[Section 2]{Bendsoe}, \cite{AllaireEig} and also \cite{Garcke} for concrete examples and further references.
Heuristically, this can be explained by the fact that larger principal eigenvalues are associated with higher temporal frequencies which correspond to smaller wavelengths of the oscillations.

The traditional mathematical tool to handle shape optimization problems is the calculus of shape derivatives based on boundary variations (see, e.g., \cite{Allaire-Jakabcin,Allaire-Dapogny,Delfour1,Murat-Simon,Simon,Sokolowski}). 
However, frequent remeshing leads to high computational costs and it cannot deal with topological changes, see also \cite{Oudet} for a comprehensive discussion. In some situations, it is possible to handle topology changes by means of homogenization methods (see, e.g., \cite{AllaireBook}) or variants of this approach such as the SIMP method (see, e.g., \cite{Bourdin,Bendsoe}). 
A drawback of this method occuring in applications to spectral problems is the phenomenon of so-called \textit{localized eigenmodes} 
(also often referred to as \textit{spurious eigenmodes}),
see \cite{BucurMartinetOudet,Bendsoe,Pedersen,Allaire}. In this context localized eigenmodes are eigenfunctions which are supported only in the void regions and pollute the spectrum with low eigenvalues. 
Especially in recent times, the level-set method has become a popular approach for topology optimization problems. 
After the method was developed in \cite{Osher-Sethian}, it has been used extensively in the literature (see, e.g., \cite{Burger,Osher-Santosa,Oudet,Allaire,AntunesOudet,KaoOsher}). Although the level-set method is capable of dealing with topological changes, difficulties can arise if voids are to be created.

In this paper, we consider an optimization problem that was introduced in \cite{Garcke}. There, the authors employed a different method to optimize the shape and the topology as well as a finite selection of eigenvalues of an elastic structure, namely the so-called \textit{(multi-)phase-field approach}. This method for shape and topology optimization was first developed in \cite{Bourdin} and subsequently used frequently in the literature. We refer the reader to \cite{Auricchio2,Blank,Blank2,Blank3,Bourdin-Chambolle,Burger-Stainko,Carraturo,Dede,Ebeling-Rump,Ebeling-Rump2,Penzler,Marino,DondlPohRumpfSimon}
to at least mention some of the various contributions.

In \cite{Garcke}, an elastic structure consisting of $N-1$ materials is described by a \textit{multi-phase-field variable}. This is a vector-valued function $\bphi:\Omega\to \R^N$ whose components $\varphi^1,...,\varphi^{N-1}$ represent the volume fractions of the materials, and $\varphi^N$ represents the void (i.e., the region where no material is present). In particular, the components of $\bphi$ are restricted to attain their values only in the interval $[0,1]$.
In most parts of the design domain, the materials are expected to appear in their pure form, meaning that the corresponding component of the multi-phase-field $\bphi$ attains the value one, whereas all other components are zero. 
These regions are separated by \textit{diffuse interfaces}, which are thin layers between the pure phases  
whose thickness is proportional to a small parameter $\eps>0$. 
In particular, $\bphi$ is expected to exhibit a continuous transition between the values zero and one at these diffuse interfaces. The main advantage of the phase-field approach in the context of shape and topology optimization is that topological changes (such as merging or splitting of material components or the creation of holes) during the optimization process can be handled without any problems.
The optimization problem in \cite{Garcke} is formulated as a minimization problem for an objective functional which involves a selection of eigenvalues as well as a Ginzburg--Landau type penalisation term for the phase-field.
For this problem, the existence of at least one global minimizer was established and a first-order necessary optimality condition for local minimizers was derived. A detailed mathematical formulation of the optimization problem from \cite{Garcke} will be presented in Section~\ref{SEC:Formulation}.

The main goal of this paper is to derive the \textit{sharp-interface limit} of the aforementioned optimization problem from \cite{Garcke}.
This means that we want to send the parameter $\eps$, that is related to the thickness of the diffuse interface, to zero. 
In this way, we can relate the diffuse-interface approach from \cite{Garcke} to the physically reasonable scenario of sharp-interfaces.
In particular, one of our key goals is to show that minimizers of the problem in the diffuse-interface framework converge to minimizers of a corresponding sharp-interface optimization problem. 

Qualitatively, there are two ways to deal with this passage to the limit: the rigorous investigation of the \textit{$\Gamma$-limit} of the involved cost functional, and the formal method of \textit{matched asymptotic expansions}.
For a rigorous discussion of the sharp-interface limit of diffuse-interface models describing elastic systems, we refer the reader to \cite{Blank-SharpInt,AlmiStefanelli}. There, the void is modeled as a further material having low but non-degenerate stiffness, which is crucial for the analysis.
Up to now, to the best of our knowledge, there is no rigorous $\Gamma$-limit analysis for spectral problems in the case of degenerating stiffness in the void regions.

As a first step towards the task of dealing with this delicate problem,
the sharp-interface $\Gamma$-limit for an optimization problem involving a selection of eigenvalues of the Dirichlet Laplacian was rigorously established in \cite{GarHKK_OptLapEV}. A relation between the minimization of the principal eigenvalue of the Dirichlet Laplacian on the phase-field level and the Faber--Krahn inequality on the sharp-interface level was discussed in \cite{HKL}. 

In order to understand the sharp interface limit, we thus intend to apply the technique of \textit{formally matched asymptotic expansions} on the optimization problem from \cite{Garcke}. This technique has already been employed on different phase-field models (especially of Allen--Cahn or Cahn--Hilliard type), see, e.g., \cite{Abels, Abels-Liu, Barrett, Blank, Bronsard, Hilhorst, GLSS, GLNS,GHHKL}. For comprehensive overviews of this technique, we refer to \cite{Fife, Eck, Kevorkian}.

The basic strategy of this formal approach is as follows: We assume that the phase-field as well as the corresponding eigenvalues and eigenfunctions each possess an \textit{inner asymptotic expansion} and an \textit{outer asymptotic expansion}, both given by a power series with respect to the interface parameter $\varepsilon$. 
The inner expansions approximate the aforementioned quantities ``close'' to the diffuse interface where the phase-transition takes place, whereas the outer expansions approximate these quantities in regions that are ``far'' away from the interface where only the pure phases are present.

Plugging the outer expansions into the eigenvalue equation on the diffuse-interface level, a comparison of the leading order terms leads to limit eigenvalue equations on the sharp-interface level.
At this point we will include a discussion about localized eigenmodes. As also mentioned above, in numerical simulations the formation of eigenmodes that are supported only in void areas and produce eigenvalues which pollute the low part of the spectrum (which we are interested in) is a major problem. We will see that our asymptotic approach is able to deal with such localized eigenmodes. More precisely, we will see that if such modes appear, then the corresponding eigenvalues will diverge to infinity as $\eps\to 0$. Thus, if $\eps>0$ is sufficiently small, localized eigenmodes do not affect the lower part of the spectrum that is considered in our optimization problem.

The inner expansions are used to describe the aforementioned quantities in tubular neighborhoods around the interfaces. The distinction between inner and outer regions is needed as we expect the phase-field to change its values rapidly in regions close to the interface. This is because the diffuse interface will become infinitesimally thin as $\eps\to 0$. Here, the main idea is to introduce a rescaled coordinate system which takes the $\eps$-scaling of this region into account.

After studying these two forms of expansions separately, it is crucial to match both expansions in a so-called intermediate region. This means we compare both extensions by exploiting the two different coordinate systems they are formulated in. Plugging these relations into the optimality system and comparing the leading order terms, we obtain boundary conditions for the previously obtained limit eigenvalue equations. We observe that the boundary condition on the free boundary will essentially be of homogeneous Neumann type. Furthermore, we use the inner expansions to derive a limit equality from the strong formulation of the gradient inequality.
The limit eigenvalue equations together with this gradient equality will then constitute the optimiality system of a corresponding sharp-interface optimization problem. This will be justified from the viewpoint of classical shape calculus (see, e.g., \cite{Allaire}) by relating the limit of the gradient inequality to the shape derivative of the associated cost functional.

However, in order to apply the technique of formally matched asymptotics, we first need to reformulate the gradient inequality on the diffuse-interface level as a pointwise gradient equality by introducing suitable Lagrange multipliers.
Under an additional regularity assumption on the involved eigenfunctions, this is achieved by employing a regularization technique following the ideas of \cite{Sarbu}, in which we eventually pass to the limit. A key benefit of this strategy is that it provides an explicit construction of the Lagrange multipliers arising from the constraints of our optimization problem. This specific knowledge about the Lagrange multipliers will turn out to be essential for the asymptotic analysis. As a byproduct, we also prove that the phase-field variable $\bphi$ solving the original gradient inequality is actually $H^2$-regular under the aforementioned assumption of suitably regular eigenfunctions.

The present paper is structured as follows. In Section~\ref{SEC:Formulation}, we first introduce the theory that is necessary to formulate the diffuse-interface optimization problem along with its first-order necessary optimality condition. The derivation of the strong formulation of the gradient inequality will then be performed in Section~\ref{SEC:Ana}.
Using the outer expansions, we derive the state equations of the limit problem in Section~\ref{SEC:outer}.
In order to construct inner expansions we first analyze in Section~\ref{SEC:inner} how the involved differential operators are reformulated in a suitable rescaled coordinate system. Suitable matching conditions connecting the outer expansions with the inner expansions are then derived in Section~\ref{SEC:Match}. In Section~\ref{SEC:Leading}, we use the inner expansions to derive boundary conditions on the free boundary in the sharp-interface setting as well as the sharp-interface limit of the gradient inequality. Then, in Section~\ref{SEC:limitpr}, we comprehensively state the limit optimality system, and in Section~\ref{SEC:Rel}, the first-order necessary optimality condition on the sharp-interface level is related to classical shape calculus. Eventually, in Section~\ref{sec:num}, we present several numerical solutions for concrete optimization problems on the diffuse interface-level. In this context, we also discuss suitable choices of the model parameters. In particular, we observe that our results using the phase-field approach compare very well to similar numerical results obtained in \cite{Allaire} by means of the level-set method combined with classical shape calculus on the sharp interface level.

%%%%%%%%%%%%%%%%%%%%%%%%%%%%%%%%%%%
%%%%%%%%%%%%FORMULATION%%%%%%%%%%%%
%%%%%%%%%%%%%%%%%%%%%%%%%%%%%%%%%%%

\section{Formulation of the problem}\label{SEC:Formulation}
In this section, we recall the framework introduced in \cite{Garcke} in order to formulate and understand the optimality system our analysis is based on. Therefore, we first introduce the key assumptions which shall hold throughout the paper.
\subsection{General assumptions}
\begin{enumerate}[label = (A\arabic*), leftmargin=*]
    \item \label{ASS:A:1} The \textit{design domain} $\Omega\subset \mathbb{R}^d$ is a bounded Lipschitz domain with $d\in \N$ and outer unit normal vector field $\B{n}$.
    Its boundary is split into two disjoint parts: A homogeneous Dirichlet boundary $\Gamma_D$ with strictly positive $(d-1)$-dimensional Hausdorff measure and a homogeneous Neumann boundary $\Gamma_0$. We define
    \begin{align*}
        H^1_D(\Omega;\R^d)\coloneqq
        \left\{\B{\eta}\in H^1(\Omega;\R^d)\suchthat \B{\eta}=\B{0}\text{ a.e.~on }\Gamma_D\right\}.
    \end{align*}
    \item \label{ASS:A:2} 
    The potential $\psi: \mathbb{R}^N \to \mathbb{R}_+\cup \left\{+\infty\right\}$ attains exactly $N$ global minima of value $0$ attained at the points $\B{e}_i$, i.e.,
    \begin{equation*}
    	\min\psi = \psi(\B{e}_i) = 0 \quad\text{for all $i\in\{1,...,N\}$},
    \end{equation*}
    where $\B{e}_i\in \R^N$ denotes the $i$-th standard basis vector in $\R^N$.
    Additionally, we assume $\psi$ to be decomposed into $\psi(\B{\varphi})=\psi_0(\B{\varphi})+I_{\B{G}}(\B{\varphi})$
    with $\psi_0\in C^1(\R^N,\R)$ and
    the indicator functional
    \begin{equation*}
        I_{\B{G}}(\B{\varphi})=
            \begin{cases}
                0&\text{if } \B{\varphi}\in\B{G},\\
                +\infty& \text{otherwise},
        \end{cases}
    \end{equation*}
    where $\B{G}:=\mathbb{R}^N_{+}\cap \Sigma^N$ with
    \begin{align}
        \label{DEF:SIGMA}
        \Sigma^N&:=\left\{\B{\xi}\in \mathbb{R}^N \;\left|\;\, \sum_{i=1}^{N}\xi^{i}=1\right.\right\},
        \\
        \mathbb{R}^N_{+}
        \label{DEF:RNP}
        &:=
            \left\{
                \left.\B{\xi}\in \mathbb{R}^N \,\right|\, \forall i\in \left\{1,\dots, N\right\}:\; \xi^{i}\ge 0 
            \right\}.
    \end{align}
    The set $\B{G}$ is referred to as the \textit{Gibbs simplex}.
    A prototype example for the continuous part $\psi_0$ would be $\psi_0(\bphi)=\frac{1}{2}(1-\bphi\cdot\bphi)$ (cf.~\cite{Blank}).
    \item \label{ASS:A:3} 
    The function $\Psi: \left(\mathbb{R}_{>0}\right)^l\to \mathbb{R}$
    is continuously differentiable.
\end{enumerate}
\subsection{The phase-field variable}\label{Sec:Phase}
To describe the material distribution of $(N-1)$ different materials in the design domain $\Omega$, we introduce the phase-field $\B{\varphi}:\Omega\to \mathbb{R}^N$. Its components $\varphi^i$, $i=1,...,N-1$ represent the materials, whereas $\varphi^N$ represents the void. We expect $\B{\varphi}$ to \textit{continuously} change its values at the diffuse interface. From a physical point of view, this means that the considered materials can be mixed at the interfacial region. 
In order for the phase-field to behave in a physically reasonable way we impose suitable constraints.
First of all, we fix the total amount of each material by the mean value constraint
\begin{align}\label{Volcon}
\fint_{\Omega}\B{\varphi}\mathrm{\,d}x=\B{m}=\big(m^i\big)_{i=1}^{N},
\end{align}
with $m^i\in (0,1)$ and $\B{m}\in \Sigma^N$ (cf.~\eqref{DEF:SIGMA}).
The constraint $\B{m}\in \Sigma^N$ is a consequence of the physical assumption that the amount of the individual volume fractions $\varphi^i$ needs to sum up to $1$ at each point in the domain. 
Furthermore, it is physically reasonable to assume that each volume fraction shall attain its values only in the interval $[0,1]$. This property is incorporated by assuming that any $\bphi$ belongs to the \textit{set of admissible phase-fields}
\begin{equation*}
    \B{\mathcal{G}}^{\B{m}}=\left\{\B{\varphi}\in \B{\mathcal{G}}\left|\,\fint_{\Omega}\B{\varphi}\mathrm{\,d}x=\B{m}\right.\right\},
\end{equation*}
with
\begin{align*}
    \B{\mathcal{G}}
        :=\left\{
                \left.\B{\varphi}\in \HN \right|\,\B{\varphi}(\bx)\in \B{G}\;\;\text{for almost all $\bx\in\Omega$}
            \right\}.
\end{align*}
Here, $\B{G}$ is the Gibbs simplex that was introduced in \ref{ASS:A:2}.

\subsection{The Ginzburg--Landau energy} \label{SECT:GL}
In order to make our optimzation problem well-posed, we need to include a regularizing term for the phase-field in the cost functional. For this purpose, we use the so-called \textit{Ginzburg--Landau} energy 
\begin{equation}     
\label{eq:prob:GinzLan}
        E^{\varepsilon}(\B{\varphi})
    =
        \int_{\Omega}
        \left(
            \frac{\varepsilon}{2}\abs{\nabla\B{\varphi}}^2+\frac{1}{\varepsilon}\psi(\B{\varphi})
        \right) \,\mathrm dx, 
\end{equation}
for all $\bphi\in H^1(\Omega;\R^N)$.
Here, the parameter $\varepsilon>0$ is related to the the thickness of the diffuse-interface and therefore, it is usually chosen very small. In the sharp-interface limit, we intend to (formally) send this parameter to zero. Due to assumption \ref{ASS:A:2}, the potential $\psi$ enforces the phase-field $\B{\varphi}$ to attain its values only in the Gibbs simplex. However, as we already include the the Gibbs simplex constraint in the set of admissible phase-fields, it suffices to merely consider the regular part $\psi_0$ of the potential $\psi$ in the Ginzburg--Landau energy as long as $\bphi\in\B{\mathcal{G}}$. This means that
\begin{equation*}
    E^{\eps}(\bphi)=
            \int_{\Omega}
        \left(
            \frac{\varepsilon}{2}\abs{\nabla\B{\varphi}}^2+\frac{1}{\varepsilon}\psi_0(\B{\varphi})
        \right) \,\mathrm dx
\end{equation*}
for all $\bphi\in\B{\mathcal{G}}$.

\subsection{The elasticity tensor and the density function}
As we intend to consider an elastic structure, we next introduce the two tensors of linear elasticity, which will be used to formulate the state equation.
The \textit{strain tensor} of a vector-valued function $\B{u}\in \Hd$ is given as
\begin{align*}
    \mathcal{E}(\B{u})\coloneqq\left(\nabla \B{u}\right)^{\text{sym}}=\frac{1}{2}\left(\nabla \B{u}+\nabla\B{u}^T\right).
\end{align*}
The \textit{elasticity tensor} $\mathbb{C}: \R^N\to \R^{d\times d\times d\times d}$ is a fourth order tensor with the following properties.
\begin{enumerate}[label = (B\arabic*), leftmargin=*]
    \item \label{ASS:B:1} $\mathbb{C}_{ijkl}\in C^{1,1}_\text{loc}(\mathbb{R}^N;\mathbb{R})$.
    \item \label{ASS:B:2} $\mathbb{C}$ is symmetric, i.e.,%
    \begin{equation*}
        \mathbb{C}_{ijkl}=\mathbb{C}_{jikl}=\mathbb{C}_{ijlk}=\mathbb{C}_{klij}\,,
    \end{equation*}
    for $i,j,k,l=1,\dots,d$.
    \item \label{ASS:B:3} $\mathbb{C}$ is coercive for any fixed $\eps>0$, i.e., there exists $\theta_\eps>0$ such that
    \begin{align*}
    \theta_\eps \abs{\mathcal{B}}^2\le \mathbb{C}(\bphi)\, \mathcal{B}:\mathcal{B}\,,
    \end{align*}
    for all $\bphi\in \mathbb{R}^N$ and all symmetric matrices $\mathcal{B}\in \mathbb{R}^{d\times d}$.
    For two matrices $\mathcal{A},\mathcal{B}\in \mathbb{R}^{d\times d}$ this product is defined as
    \begin{equation*}
    \mathcal{A}:\mathcal{B}\coloneqq 
    \sum_{i,j=1}^{d}\mathcal{A}_{ij}\mathcal{B}_{ij}\;.
    \end{equation*}
\end{enumerate}

The component specific densities are modeled by a density function $\rho: \R^N\to \R$ with the following properties.
\begin{enumerate}[label = (C\arabic*), leftmargin=*]
    \item \label{ASS:C:1} $\rho\in C^{1,1}_\text{loc}(\mathbb{R}^N;\mathbb{R})\,$.
    \item \label{ASS:C:2} $\rho$ is uniformly positive for any fixed $\eps>0$, i.e., there is a constant $\rho_{0,\eps}>0$ such that $\rho(\bphi)\ge \rho_{0,\eps}$ for all $\bphi\in \R^N$.
\end{enumerate}
 
As in \cite{Blank}, we want $\mathbb{C}$ and $\rho$ to possess a decomposition that reflects the material specific elasticity and density of the $N-1$ materials. Therefore, for $\bphi\in \B{G}$, we set
\begin{align}\label{elT}
\begin{aligned}
	\mathbb{C}(\B{\varphi})
	&=\overline{\mathbb{C}}(\B{\varphi})
	    +\tilde\C^N \eps^k \alpha_V(\varphi^N)
	=\sum_{i=1}^{N-1}\mathbb{C}^{i}\alpha_M(\varphi^i)
	    +\tilde\C^N \eps^k \alpha_V(\varphi^N),\\
	\rho(\B{\varphi})
	&=\overline{\rho}(\B{\varphi})+\tilde\rho^N \eps^l \beta_V(\varphi^N)
	=\sum_{i=1}^{N-1}\rho^{i}\beta_M(\varphi^i)+\tilde\rho^N \eps^l \beta_V(\varphi^N),
\end{aligned}
\end{align}
for any $k,l\in\N$ and any $\alpha_M,\alpha_V,\beta_M,\beta_V \in C^{1,1}_{\text{loc}}(\R)$ with
\begin{align}\label{eq:def_inter}
    \begin{split}
    &\alpha_M(0)=\alpha_V(0)=
    \beta_M(0)=\beta_V(0)=0,\\
    &\alpha_M(1)=\alpha_V(1)=
    \beta_M(1)=\beta_V(1)=1,\\
    &\alpha_V(s), \beta_V(s), \alpha_M(s),\beta_M(s) \in (0,1) \qquad \text{ for all } s\in (0,1).
   \end{split}
\end{align}
 In this way, we have 
 \begin{align}\label{eq:interpol}
    \begin{aligned}
    \mathrm{supp}\, \alpha_M(\varphi^i) &= \mathrm{supp}\, \beta_M(\varphi^i) = \mathrm{supp}\, \varphi^i
    \qquad \text{for $i=1,\dots,N-1$},\\
    \mathrm{supp}\, \alpha_V(\varphi^N) &= \mathrm{supp}\, \beta_V(\varphi^N) = \mathrm{supp}\, \varphi^N.
    \end{aligned}
 \end{align}
The positivity conditions are necessary to satisfy the assumptions \ref{ASS:B:3} and \ref{ASS:C:2}. The global non-negativity of $\alpha_M,\beta_M$ will be essential in avoiding spurious eigenmodes, see Section~\ref{intermezzo}.

This means, for $i\in\{1,...,N-1\}$, we choose component specific but constant elasticity tensors $\mathbb{C}^i\in \R^{d\times d\times d\times d}$ and densities $\rho^i >0$. As the void obviously has neither a stiffness nor a density, we approximate the void components by some fixed elasticity tensor $\tilde{\mathbb{C}}^N\in \R^{d\times d\times d\times d}$ and density $\tilde{\rho}^N > 0$ that are multiplied by the small interface parameter $\eps$ that was introduced in Section~\ref{SECT:GL} in the context of the Ginzburg--Landau energy. Of course, these constant prefactors need to be chosen such that the assumptions \ref{ASS:B:2}, \ref{ASS:B:3} and \ref{ASS:C:2} are satisfied, see \cite{Garcke}.

Even though an adequate scaling of the void components $\tilde{\C}^N$ and $\tilde{\rho}^N$ with respect to $\eps$ combined with an appropriate choice of interpolation functions $\alpha_M,\alpha_V,\beta_M,\beta_V$ will be crucial for the numerical simulations in order to avoid spurious eigenmodes, see also Section~\ref{intermezzo}, we emphasize that our formal analysis works for any kind of decomposition as in \eqref{elT} as long as the void components are scaled with  $\eps^p$ for some  $p\in \N$. Thus, in terms of our analysis, we will work with the general decomposition in \eqref{elT}, but we will also justify in the framework of asymptotic expansions how a \emph{specific} choice of $k,l$ and $\alpha_M,\alpha_V,\beta_M,\beta_V$ in \eqref{elT}  is capable of dealing with localized eigenmodes, see Section~\ref{intermezzo}.

As in \cite{Blank} and \cite{Garcke}, we extend the definition \eqref{elT} to the whole hyperplane $\Sigma^N$ by introducing a cut-off function for a small parameter $\omega>0$. We define
\begin{align}\label{cutoff}
    \sigma_{\omega}:\mathbb{R}\to \mathbb{R},\quad
    s\mapsto
    \begin{cases}
        -\omega \quad&\text{if\;}s\le -\omega,\\
        a_{\omega}\quad&\text{if\;}-\omega<s<0,\\
        s\quad&\text{if\;}0\le s\le 1,\\
        b_{\omega}\quad&\text{if\;} 1<s<1+\omega,\\
        1+\omega \quad&\text{if\;} s\ge 1+\omega,
    \end{cases}
\end{align}
where $a_{\omega}$ and $b_{\omega}$ are monotonically increasing $C^{1,1}$ functions that are constructed in such a way that $\sigma_{\omega}$ is also a $C^{1,1}$ function. Then we consider the extensions
\begin{align}\label{exrho}
\begin{aligned}
&\overline{\rho}:\R^N\to \mathbb{R},\quad
\B{\varphi}\mapsto \sum_{i=1}^{N-1}\rho^{i}\big(\sigma_{\omega}\circ\beta_M\big)([P_{\Sigma}(\bphi)]^i),
\\
&\rho:\R^N\to \mathbb{R},\quad
\B{\varphi}\mapsto \overline{\rho}(\B{\varphi})+ 
\tilde{\rho}^N\varepsilon^l \big(\sigma_{\omega}\circ\beta_V\big)([P_{\Sigma}(\bphi)]^N),
\end{aligned}
\end{align}
where 
\begin{align*}
    P_{\Sigma}: \R^N\to \Sigma^N,\quad
    \B{\varphi}\mapsto \underset{\B{v}\in \Sigma^N}{\arg\min}\frac12\norm{\B{\varphi}-\B{v}}_{\ell^2}
\end{align*}
denotes the $\ell^2$ projection of $\mathbb{R}^N$ onto the convex set $\Sigma^N$. Note that in order for \ref{ASS:C:2} to be satisfied, the demanded positivity of the interpolation functions in \eqref{eq:interpol} is in general not enough, as we allow $\alpha_V,\beta_V$ to become negative outside the unit interval. However, the special choice 
\begin{align}\label{eq:num_choice}
    \begin{aligned}
    \beta_M(\varphi^i)&=(\varphi^i)^2,\\
    \beta_V(\varphi^N)&=-(\varphi^N-1)^2+1,
    \end{aligned}
\end{align}
for $\varphi\in\Sigma^N$, which will be used in the numerical simulations, see also \eqref{eq:num_quad}, satisfies \ref{ASS:C:2} due to the fact that the $\ell^1$ and $\ell^2$ norms are equivalent on $\R^N$. In our analysis we will stick to the general decomposition \eqref{elT} for full generality.
The tensor $\mathbb{C}$ is dealt with analogously.

To conclude this subsection, let us introduce some further notation. For $\bphi\in \LuN$, we define a weighted scalar product on $L^2(\Omega;\R^d)$ by 
\begin{align*}
        (\B{f},\B{g})_{\rho(\bphi)}
    \coloneqq 
        \int_{\Omega}\rho(\bphi)\B{f}\cdot \B{g}\mathrm{\,d}x\quad \text{for all } \B{f},\B{g}\in L^2(\Omega;\R^d),
\end{align*}
and a weighted scalar product on $H^1_D(\Omega;\R^d)$ by
\begin{align*}
    \langle\mathcal{E}(\B{u}),\mathcal{E}(\B{v})\rangle_{\mathbb{C}(\bphi)}\coloneqq 
    \int_{\Omega}\mathbb{C}(\bphi)\mathcal{E}(\B{u}):\mathcal{E}(\B{v})\mathrm{\,d}x\quad \text{for all }\B{u},\B{v}\in H^1_D(\Omega;\R^d).
\end{align*}
In the following, we write $L^2_{\bphi}(\Omega;\R^d)$ in order to emphasize the fact that we equip $L^2(\Omega;\R^d)$ with the scalar product $(\cdot,\cdot)_{\rho(\bphi)}$.
\subsection{The state equation}\label{SUB:Se}
We now introduce the system of equations describing the elastic structure, which will be referred to as the \textit{state equation}. It reads as
\begin{align}\tag{$SE^\eps$}\label{state}
    \begin{cases}
        \begin{array}{rll}
                -\nabla\cdot\left[\mathbb{C}(\B{\varphi})\mathcal{E}(\B{w}^{\eps,\B{\varphi}})\right]
            &=
                \lambda^{\eps,\B{\varphi}}
                \rho(\B{\varphi})\B{w}^{\eps,\B{\varphi}}
            &\quad
                \text{in }\Omega,\\
                \B{w}^{\eps,\B{\varphi}}
            &=
                \B{0}
            &\quad
                \text{on }\Gamma_D,\\
                \left[
                    \mathbb{C}(\B{\varphi})\mathcal{E}(\B{w}^{\eps,\B{\varphi}})
                \right]\B{n}
            &=
                \B{0}
            &\quad
                \text{on }\Gamma_0,
        \end{array}
    \end{cases}
\end{align}
and its weak formulation is given by
\begin{align}\label{WWP}
    \BE{\B{w}^{\eps,\bphi}}{\B{\eta}}=\lambda^{\eps,\B{\varphi}}\rsp{\B{w}^{\eps,\bphi}}{\B{\eta}}{\B{\varphi}}
\end{align}
for all $\B{\eta}\in \Hd$. 
In \cite{Garcke}, using classical spectral theory, it was shown that for any $\bphi\in L^\infty(\Omega,\R^N)$, there exists a sequence of eigenvalues (with multiple eigenvalues being repeated according to their multiplicity) which can be ordered as
\begin{align}\label{lamSeq}
0<\lambda_1^{\eps,\B{\varphi}}
\le\lambda_2^{\eps,\B{\varphi}}
\le\lambda_3^{\eps,\B{\varphi}}
\le\cdots \to \infty.
\end{align}
This comprises all eigenvalues of \eqref{WWP}.
Moreover, the corresponding eigenfunctions $$\{\B{w}_1^{\eps,\bphi},\B{w}_2^{\eps,\bphi},...\}\subset H^1_D(\Omega;\R^d)$$ can be chosen as an orthonormal basis of $L^2_\bphi(\Omega;\R^d)$, meaning that
\begin{align}\label{EveNorm}
   (\B{w}_i,\B{w}_j)_{\rho(\bphi)} = \int_{\Omega}\rho(\bphi)\, \B{w}_i\cdot \B{w}_j \mathrm{\,d}x= \delta_{ij}
\end{align}
for all $i,j\in\N$.
This property will be crucial when considering the formal asymptotics of the eigenfunctions.
In the following, when we talk about eigenvalues and eigenfunctions, we will always refer to the pairs $(\lambda_i^{\eps,\bphi},\bw_i^{\eps,\bphi})$ with $i\in \N$, which have the aforementioned properties.

\subsection{The optimization problem and the gradient inequality}
Finally, we are in a position to state the optimization problem 
\begin{align}\tag{$\mathcal{P}^{\varepsilon}_{l}$}\label{Pepsla}
    \left\{
        \begin{array}{ll}
            \min&	
                J^{\varepsilon}_l(\B{\varphi}),\\
            \text{over}&
                \B{\varphi}\in \mathcal{\B{\mathcal{G}}}^{\B{m}},\\
            \text{s.t.}& 
                \lambda^{\eps,\B{\varphi}}_{n_1},\dots, \lambda^{\eps,\B{\varphi}}_{n_l}
                \text{ are eigenvalues of } \eqref{WWP},
           \end{array}
    \right.
\end{align}
with
\begin{align*}
        J_{l}^{\varepsilon}(\B{\varphi})
    \coloneqq
        \Psi(\lambda_{n_1}^{\eps,\B{\varphi}},\dots, \lambda_{n_l}^{\eps,\B{\varphi}})
        +
        \gamma E^{\varepsilon}(\B{\varphi}),
\end{align*}
for some $l\in \mathbb{N}$, where $n_1,\dots,n_l\in \mathbb{N}$ indicate a selection of eigenvalues.
Here, $\gamma>0$ is a fixed constant related to surface tension.

\begin{Rem}
    It is worth mentioning that we do not need any boundedness assumption on $\Psi$ in order to prove the existence of a minimizer to \eqref{Pepsla} in the same way as in \cite[Theorem 6.1]{Garcke}. In analogy to \cite[Lemma 3.7]{GarHKK_OptLapEV}, one can show that there are constants $C_{1,\eps},\,C_{2,\eps} > 0$ depending only on the choice of $\C_{\eps}$ and $\rho_{\eps}$ such that
    \begin{align*}
        C_{1,\eps}\lambda_k^M\le \lambda_k^{\eps,\bphi}\le C_{2,\eps} \lambda_k^M
        \quad\text{for all $\bphi\in \B{\mathcal{G}}$.}
    \end{align*}
    Here, $\lambda_k^M$ denotes the $k$-th eigenvalue of the problem \eqref{WWP} with $\mathbb{C}\equiv \mathrm{Id}$ and $\rho\equiv 1$.
    Qualitatively speaking, $\lambda_k^M$ denotes an eigenvalue in the situation when the whole design domain is occupied by one material.
\end{Rem}

In \cite[Theorem 6.2]{Garcke}, the following first-order necessary optimality conditions was derived.
\begin{Thm}
Let $\bphi\in \mathcal{\B{\mathcal{G}}}^{\B{m}}$ be a local minimizer of \eqref{Pepsla}, i.e., there exists $\delta>0$ such that $J_{l}^{\varepsilon}(\B{\varphi}) \le J_{l}^{\varepsilon}(\B{\zeta})$ for all $\B{\zeta} \in \mathcal{\B{\mathcal{G}}}^{\B{m}}$ with $\norm{\B{\zeta}-\bphi}_{H^1(\Omega;\R^N)\cap L^\infty(\Omega;\R^N)}<\delta$.
We further assume that the eigenvalues $\lambda_{n_1}^{\eps,\bphi},\dots,\lambda_{n_l}^{\eps,\bphi}$ are simple.
Then the gradient inequality 
\begin{alignat}{2}\tag{$GI^\eps$}\label{GIp}
    \begin{aligned}
        &\sum_{r=1}^{l}
        \Bigg\{
        [\partial_{\lambda_{n_r}}\hspace{-0.7ex}\Psi]
        \big(
        \lambda^{\eps,\B{\varphi}}_{n_1},\dots,\lambda^{\eps,\B{\varphi}}_{n_l}
        \big)
        \\
        &\qquad\cdot
        \Big(
        \langle
        \E(\B{w}^{\eps,\B{\varphi}}_{n_r}):\E(\B{w}^{\eps,\B{\varphi}}_{n_r})
        \rangle_{\C^\prime(\bphi)(\tilde{\B{\varphi}}-\bphi)}
        -\lambda^{\eps,\B{\varphi}}_{n_r}
        \int_{\Omega}
        \rho^{\prime}(\B{\varphi})\big(\tilde{\B{\varphi}}-\B{\varphi}\big)
        \big|\B{w}^{\eps,\B{\varphi}}_{n_r}\big|^2
        \textup{\,d}x
        \Big)
        \Bigg\}\\
        &\;\;\;+
        \gamma\varepsilon
        \int_{\Omega}
        \nabla\B{\varphi}:\nabla(\tilde{\B{\varphi}}-\B{\varphi})
        \textup{\,d}x
        +\frac{\gamma}{\varepsilon}
        \int_{\Omega}
        \psi_0^{\prime}(\B{\varphi})(\tilde{\B{\varphi}}-\B{\varphi})
        \textup{\,d}x
        \;\ge\; 0
    \end{aligned}
\end{alignat}
holds for all $\tilde{\B{\varphi}}\in \B{\mathcal{G}}^{\B{m}}$.
\end{Thm}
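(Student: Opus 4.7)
The plan is to exploit convexity: the admissible set $\B{\mathcal{G}}^{\B{m}}$ is defined by the pointwise Gibbs-simplex constraint together with a single linear mean-value constraint, hence is convex. For any $\tilde{\bphi}\in\B{\mathcal{G}}^{\B{m}}$, the path $\bphi_t := \bphi + t(\tilde{\bphi} - \bphi)$ therefore lies in $\B{\mathcal{G}}^{\B{m}}$ for all $t\in[0,1]$, and for $t$ sufficiently small it also lies in the $(H^1\cap L^\infty)$-ball of radius $\delta$ around $\bphi$. Local minimality then gives
\[
\liminf_{t\searrow 0}\frac{J_l^\eps(\bphi_t)-J_l^\eps(\bphi)}{t}\;\ge\;0,
\]
and the entire task reduces to computing this one-sided directional derivative and identifying it with the left-hand side of \eqref{GIp}.

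The Ginzburg--Landau contribution is the straightforward ingredient: a direct calculation using the $C^1$-regularity of $\psi_0$ from \ref{ASS:A:2} shows that $t\mapsto E^\eps(\bphi_t)$ is differentiable at $t=0$ with derivative
\[
\eps\int_\Omega \nabla\bphi:\nabla(\tilde{\bphi}-\bphi)\,dx + \frac{1}{\eps}\int_\Omega \psi_0'(\bphi)\cdot(\tilde{\bphi}-\bphi)\,dx,
\]
which are exactly the last two integrals in \eqref{GIp} after multiplication by $\gamma$.

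The essential and harder step is to differentiate each eigenvalue map $t\mapsto\lambda_{n_r}^{\eps,\bphi_t}$; this is where simplicity is crucial. Starting from $\BE{\bw_{n_r}^{\eps,\bphi_t}}{\B{\eta}}=\lambda_{n_r}^{\eps,\bphi_t}(\bw_{n_r}^{\eps,\bphi_t},\B{\eta})_{\rho(\bphi_t)}$ and the normalization $(\bw_{n_r}^{\eps,\bphi_t},\bw_{n_r}^{\eps,\bphi_t})_{\rho(\bphi_t)}=1$, I would first establish continuity of $t\mapsto(\lambda_{n_r}^{\eps,\bphi_t},\bw_{n_r}^{\eps,\bphi_t})$ via the Courant--Fischer min-max characterization together with the uniform coercivity \ref{ASS:B:3} and the $C^{1,1}_{\mathrm{loc}}$-regularity from \ref{ASS:B:1} and \ref{ASS:C:1}. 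Simplicity of $\lambda_{n_r}^{\eps,\bphi}$ then yields a uniform spectral gap for small $t>0$, and classical Rellich--Kato perturbation theory produces a differentiable branch of the eigenvalue together with a suitably normalized branch of the eigenfunction at $t=0$. Formally differentiating the weak eigenvalue equation in $t$, testing with $\bw_{n_r}^{\eps,\bphi}$ itself, and eliminating all terms containing the derivative of $\bw_{n_r}^{\eps,\bphi_t}$ by means of the eigenvalue equation and the differentiated normalization (the standard Hellmann--Feynman mechanism) then delivers
\[
\frac{d}{dt}\bigg|_{t=0^+}\lambda_{n_r}^{\eps,\bphi_t}
=
\langle\E(\bw_{n_r}^{\eps,\bphi}),\E(\bw_{n_r}^{\eps,\bphi})\rangle_{\C'(\bphi)(\tilde{\bphi}-\bphi)}
-\lambda_{n_r}^{\eps,\bphi}\int_\Omega\rho'(\bphi)(\tilde{\bphi}-\bphi)\,|\bw_{n_r}^{\eps,\bphi}|^2\,dx.
\]

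Applying the chain rule for $\Psi\in C^1$ and combining the resulting contributions with the Ginzburg--Landau derivative above, the one-sided directional derivative of $J_l^\eps$ at $\bphi$ in direction $\tilde{\bphi}-\bphi$ coincides precisely with the left-hand side of \eqref{GIp}, so the desired inequality is immediate. I expect the principal obstacle to lie in making the eigenvalue differentiation rigorous rather than merely formal: one needs a $t$-uniform spectral gap around $\lambda_{n_r}^{\eps,\bphi}$ together with a coherent choice of eigenfunction branch and its normalization along the path $\bphi_t$, so that the Hellmann--Feynman computation is genuinely justified (and not only a directional derivative on the labelled eigenvalue sequence \eqref{lamSeq}, which could otherwise fail to be smooth when eigenvalues cross). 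All remaining steps are routine chain-rule manipulations that go through under the structural hypotheses \ref{ASS:A:1}--\ref{ASS:C:2}.
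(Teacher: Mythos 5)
Your proposal is correct and follows essentially the same route as the proof the paper relies on (Theorem 6.2 of the cited work of Garcke et al.): convexity of $\B{\mathcal{G}}^{\B{m}}$ reduces local minimality to a one-sided directional derivative along $\bphi+t(\tilde{\bphi}-\bphi)$, and the key ingredient is the differentiability of the simple eigenvalues $\lambda_{n_r}^{\eps,\bphi}$ with respect to the phase-field, with the Hellmann--Feynman-type formula for the generalized eigenvalue problem yielding exactly the two terms involving $\C'(\bphi)$ and $\rho'(\bphi)$, combined with the chain rule for $\Psi$ and the elementary derivative of the Ginzburg--Landau energy. Your cautionary remark about needing a $t$-uniform spectral gap and a coherent normalized eigenfunction branch is precisely the point addressed in the cited reference, so no genuine gap remains.
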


The upcoming sharp-interface analysis will be concerned with passing to the limit in the state equation \eqref{state} as well as in the gradient inequality \eqref{GIp}.

%%%%%%%%%%%%%%%%%%%%%%%%%%%%%%%%%%%
%%Analysis of the gradient inequality
%%%%%%%%%%%%%%%%%%%%%%%%%%%%%%%%%%%

\section{Analysis of the gradient inequality}\label{SEC:Ana}
In this section, we will show under a suitable regularity assumption on the eigenfunctions involved in \eqref{GIp} that there exists a solution of the above gradient inequality possessing even the regularity $\B{\varphi}\in H^2(\Omega; \mathbb{R}^N)$.
This will be carried out by applying a regularization process to the non-smooth potential $\psi$, which was employed in a similar fashion in \cite{Blowey,BloweyEll, Elliott, Sarbu}.
Our approach mainly follows the ideas of \cite{Sarbu}.

We regularize the gradient inequality in order to deal with the indicator functional $I_{\B{G}}$ contained in the definition of the potential $\psi$. This will yield a sequence of $H^2$-regular approximating phase-fields $(\pd)_{\delta>0}$ solving regularized equations and converging to the desired phase-field $\B{\varphi}$. Another convenient aspect of this procedure is that it will generate Lagrange multipliers that will allow us to transform the gradient inequality into an equality. This strong formulation of \eqref{GIp} will be the starting point for our asymptotic analysis in Section~\ref{SEC:Leading}.

\subsection{Regularization of the potential \texorpdfstring{$\psi$}{x} and rewriting the constraints}
We notice that $\B{\varphi}\in \B{\mathcal{G}}^{\B{m}}$ needs to satisfy the constraint
\begin{align*}
    \varphi^i(x)\ge 0
\end{align*}
for almost every $x\in \Omega$ and $i=1,\dots,N$. To deal with this constraint we regularize the potential appearing in the Ginzburg-Landau energy which was initially given as
\begin{align*}
    \psi(\bphi)=\psi_0(\bphi)+I_{\B{G}}(\bphi).
\end{align*}
\begin{Def}\label{DEF:Reg}
    For $\delta>0$ we define the regularized potential
    \begin{align}\label{defreg}
        \psi_{\delta}:\mathbb{R}^N\to\mathbb{R},\quad
        \psi_{\delta}(\bphi)=\psi_{0}(\bphi)+\frac{1}{\delta}\hat{\psi}(\bphi),
    \end{align}
    where
    \begin{equation}\label{defHat}
        \hat{\psi}(\bphi)\coloneqq\sum_{i=1}^{N}\big(\min(\varphi^i,0)\big)^2.
    \end{equation}
\end{Def}
\begin{Rem}
 We see that the regularization now approximates the indicator functional $I_{\mathbb{R}^N_{+}}$ by the function $\frac{1}{\delta}\hat{\psi}$.
 For $\delta \searrow 0$, exactly the negative parts of the components of $\B{\varphi}$ are penalized. 
\end{Rem}
 To deal with the remaining constraints hidden in $\B{\mathcal{G}}^{\B{m}}$ namely the integral constraint $\fint_{\Omega}\B{\varphi}\mathrm{\,d}x=\B{m}$ and the sum constraint $\sum_{i=1}^{N}\varphi^i=1$ a.e.~in $\Omega$, we introduce linear orthogonal projections.
\begin{Def}\label{DEF:Proj}
Let us define the linear orthogonal projections
\begin{align}\label{defoint}
    \begin{aligned}
        P_{\int}: L^2(\Omega;\mathbb{R}^N)&\to L^2_0(\Omega;\mathbb{R}^N),\\
        \B{u}&\mapsto \B{u}-\fint_{\Omega}\B{u}\mathrm{\,d}x
    \end{aligned}    
\end{align}
with $L^2_0(\Omega;\mathbb{R}^N)\coloneqq \big\{\B{u}\in L^2(\Omega;\mathbb{R}^N)\,\big|\, \int_{\Omega}\B{u}\mathrm{\,d}x=\B{0}\big\}$
and
\begin{align}\label{defosum}
    \begin{aligned}
        P_{T\Sigma}: L^2(\Omega;\mathbb{R}^N)&\to L^2_{T\Sigma}(\Omega;\mathbb{R}^N),\\
        \B{u}&\mapsto \B{u}-\left(\frac{1}{N}\sum_{i=1}^{N}{u}^i\right)\B{1},
    \end{aligned}    
\end{align}
where $\B{1}=(1,\dots,1)^T\in \mathbb{R}^N$ and 
\begin{align*}
L^2_{T\Sigma}(\Omega;\mathbb{R}^N)\coloneqq \left\{\B{u}\in L^2(\Omega;\mathbb{R}^N)\;\left|\; \sum_{i=1}^{N}u^i=0 \text{ a.e.~in } \Omega\right.\right\}.
\end{align*}
To simplify the notation, we further define the composition $P\coloneqq P_{T\Sigma}\circ P_{\int}=P_{\int}\circ P_{T\Sigma}$.
\end{Def}

\begin{Rem}
    Note that for the constraint $\varphi(\bx)\in \R^N_{+}$, we cannot introduce a linear orthogonal projection as there is no vector space corresponding to this constraint. Thus, the approximation of the indicator function in Definition~\ref{DEF:Reg} is actually necessary.
\end{Rem}

\subsection{Smoothness assumption and rewriting the gradient inequality}\label{Sub:Ext}
In order to obtain a suitable regularization of the gradient inequality, we need to find a way to test \eqref{GIp} with arbitrary functions in $\HN$ and not only in $\HL$. For this reason and to obtain higher regularity of the phase-field, we will need to assume higher regularity of the corresponding eigenfunctions.

We fix a parameter $\eps>0$ as well as a solution $\bphi^\eps\in\B{\mathcal{G}}^{\B{m}}\subset \HL$ of \eqref{GIp}.
For a cleaner presentation we omit the superscript $\eps$ in the eigenvalues and eigenfunctions.

A priori, the term
\begin{equation*}
        \big\langle
              \mathcal{E}\big(\B{w}_{n_r}^{\bphi}\big),
              \mathcal{E}\big(\B{w}_{n_r}^{\bphi}\big)
         \big\rangle_{\mathbb{C}^{\prime}(\bphi)\B{\eta}}
     =
        \int_{\Omega}
            \big[\mathbb{C}^{\prime}(\bphi)\B{\eta}\big]
            \mathcal{E}\big(\B{w}_{n_r}^{\bphi}\big):
            \mathcal{E}\big(\B{w}_{n_r}^{\bphi}\big)
        \mathrm{\,d}x
\end{equation*}
is well defined only for $\B{\eta}\in L^{\infty}(\Omega;\mathbb{R}^N)$ as the expression $\mathcal{E}(\bw_{n_r}):\mathcal{E}(\bw_{n_r})$
merely belongs to $L^1(\Omega)$. However, in order to consider a suitable regularized problem associated to \eqref{GIp}, we need this term to be an element in $L^2(\Omega)$. For this purpose, we require the regularity $\B{w}_{n_r}\in W^{1,4}(\Omega)$.

Therefore, we now make the following crucial regularity assumption which shall hold for the rest of this paper.
\begin{enumerate}[label = (R), leftmargin=*]
 \item\label{ASS:RegEig} For $r=1,\dots,l$, let the eigenfunctions $\B{w}_{n_r}$ involved in \eqref{GIp} belong to $W^{1,4}(\Omega;\mathbb{R}^d)$.
\end{enumerate}
\begin{Rem}
    Note that there exists a regularity theory for the equations of linear and nonlinear elasticity, see, e.g. \cite{Shi,Herzog}. However, due to the fact that the coefficient $\mathbb{C}(\bphi)$ is merely essentially bounded, we could only prove the existence of an (in general arbitrarily small) parameter $\iota>0$ such that
    \begin{equation}\label{regEF}
        \mathcal{E}(\bw_{n_r})\in L^{2+\iota}(\Omega).
    \end{equation}
    Note that there exist counterexamples going back to De Giorgi for linear systems of elliptic PDEs (see, e.g., \cite[Section 4.1]{Beck}) providing \textit{unbounded} solutions $\B{u}\in W^{1,2}(B;\R^d)$ for $d\ge 3$ to a system of the form
    \begin{align*}
        \text{div}(\mathcal{A}(x)D\B{u}(x))=\B{0} \textup{ in } B\subset \mathbb{R}^d,
    \end{align*}
    where $\mathcal{A}$ is bounded and coercive and $B$ denotes the unit ball.
    In particular, in the physically relevant case $d=3$ where $W^{1,4}(\Omega;\mathbb{R}^d)\hookrightarrow
    C^0(\overline{\Omega};\mathbb{R}^d)$, the condition $\bw_{n_r}\in W^{1,4}(\Omega;\mathbb{R}^d)$ seems to be a real assumption as unbounded eigenfunctions might exist.
\end{Rem}

In the following, let $(\cdot,\cdot)$ denote the classical scalar product on $L^2(\Omega;\mathbb{R}^N)$. 
Recalling 
\begin{align*}
    \mathbb{C}^{\prime}(\bphi)\B{\eta}=
    \left(
        \sum_{m=1}^{N}\partial_m\mathbb{C}_{ijkl}(\bphi)\eta^m
    \right)_{i,j,k,l=1}^{d}
\end{align*}
for $\B{\eta}\in \LN{2}$,
we have
\begin{align*}
    \langle
        \mathcal{E}\big(\B{w}_{n_r}^{\bphi}\big),
        \mathcal{E}\big(\B{w}_{n_r}^{\bphi}\big)
    \rangle_{\mathbb{C}^{\prime}(\bphi)\B{\eta}}
    &=
    \int_{\Omega}
    \Big(
      \sum_{m=1}^{N}
        [\partial_m\mathbb{C}(\bphi)]\eta^m
    \Big)
        \mathcal{E}\big(\B{w}_{n_r}^{\bphi}\big):
        \mathcal{E}\big(\B{w}_{n_r}^{\bphi}\big)
        \mathrm{\,d}x\\     
    &=
     \int_{\Omega}
      \sum_{m=1}^{N}
     \Big(
        [\partial_m\mathbb{C}(\bphi)]
        \mathcal{E}\big(\B{w}_{n_r}^{\bphi}\big):
        \mathcal{E}\big(\B{w}_{n_r}^{\bphi}\big)
     \Big)\eta^m
        \mathrm{\,d}x\\ 
    &=
     \int_{\Omega}
      \sum_{m=1}^{N}
     \Big[\Big(
        \mathbb{C}^{\prime}(\bphi)
        \mathcal{E}\big(\B{w}_{n_r}^{\bphi}\big):
        \mathcal{E}\big(\B{w}_{n_r}^{\bphi}\big)
     \Big)\Big]_m\eta^m
        \mathrm{\,d}x
    \\[1ex]
    &=\left(
        \mathbb{C}^{\prime}(\bphi)
        \mathcal{E}\big(\B{w}_{n_r}^{\bphi}\big):
        \mathcal{E}\big(\B{w}_{n_r}^{\bphi}\big),
        \B{\eta}
        \right).
\end{align*}
Note that the term in the last line is to be understood as
\begin{align*}
    \mathbb{C}^{\prime}(\bphi)
    \mathcal{E}\big(\B{w}_{n_r}^{\bphi}\big):
    \mathcal{E}\big(\B{w}_{n_r}^{\bphi}\big)
    =
    \Big([\partial_m\mathbb{C}(\bphi)]
    \mathcal{E}\big(\B{w}_{n_r}^{\bphi}\big):
    \mathcal{E}\big(\B{w}_{n_r}^{\bphi}\big)\Big)_{m=1}^{N}    
    \in \LN{2}.
\end{align*}
Thus, the projection of this term is well defined and the $L^2$ regularity of this object is ensured by the assumptions \ref{ASS:RegEig} and \ref{ASS:B:1}. For later purposes, we point out that a straightforward computation reveals
\begin{align*}
    P_{T\Sigma}\left[\mathbb{C}^{\prime}(\bphi)
    \mathcal{E}\big(\B{w}_{n_r}^{\bphi}\big):
    \mathcal{E}\big(\B{w}_{n_r}^{\bphi}\big)\right]
    =
    \left[\left(P_{T\Sigma}\left[\mathbb{C}_{ijkl}^{\prime}(\bphi)\right]\right)_{i,j,k,l=1}^{d}\right]
    \mathcal{E}\big(\B{w}_{n_r}^{\bphi}\big):
    \mathcal{E}\big(\B{w}_{n_r}^{\bphi}\big),
\end{align*}
where
\begin{align*}
    \mathbb{C}_{ijkl}^{\prime}(\bphi)=
    \left(\partial_m\C_{ijkl}\right)_{m=1}^N\in 
    L^2(\Omega;\R^N).
\end{align*}
To have a more concise notation, we will write
\begin{align*}
    \langle \mathcal{E}\big(\B{w}_{n_r}^{\bphi}\big):
    \mathcal{E}\big(\B{w}_{n_r}^{\bphi}\big)\rangle_{P_{T\Sigma}\left[\mathbb{C}^{\prime}(\bphi)\right]}
    :=
    P_{T\Sigma}\left[\mathbb{C}^{\prime}(\bphi)
    \mathcal{E}\big(\B{w}_{n_r}^{\bphi}\big):
    \mathcal{E}\big(\B{w}_{n_r}^{\bphi}\big)\right].
\end{align*}
Analogously, we use the notation
\begin{align*}
    \left(
        \B{w}_{n_r}^{\bphi},\B{w}_{n_r}^{\bphi}
    \right)_{\rho^{\prime}(\bphi)\B{\eta}} 
    =
    \left(
    \rho^{\prime}(\bphi)
       \B{w}_{n_r}^{\bphi}\cdot
        \B{w}_{n_r}^{\bphi},
    \B{\eta}    
    \right)
\end{align*}
for the density term.
To reformulate the gradient inequality \eqref{GIp}, we further define the function
\begin{align}
\label{DEF:FPHI}
\begin{aligned}
    \B{f}^{\B{\varphi}}\coloneqq&   
        -\sum_{r=1}^{l}\Big\{
                    [\partial_{\lambda_{i_j}}\hspace{-0.7ex}\Psi]\big(\lambda_{n_1}^{\bphi},\dots,\lambda_{n_l}^{\bphi}\big)
                    \Big(
                        \mathbb{C}^{\prime}(\bphi) \mathcal{E}\big(\B{w}_{n_r}^{\bphi}\big):\mathcal{E}\big(\B{w}_{n_r}^{\bphi}\big)
        \\
        &\qquad\qquad
                        -\lambda_{n_r}^{\bphi}
                           \rho^{\prime}(\bphi)   \B{w}_{n_r}^{\bphi}\cdot\B{w}_{n_r}^{\bphi}
                    \Big)
                \Big\}
        % \\
        % &\quad
        -\frac{\gamma}{\varepsilon}\psi_{0}^{\prime}(\bphi) \,.
\end{aligned}    
\end{align}
By means of assumption \ref{ASS:RegEig}, we infer $\B{f}^{\bphi}\in \LN{2}$. This is crucial for the subsequent analysis, especially for the absorption argument at the end of Lemma~\ref{lem:phi_apriori}, and therefore, assumption \ref{ASS:RegEig} cannot be waived. As $\bphi\in \B{\mathcal{G}}^{\B{m}}$ is fixed, we write $\B{f}=\B{f}^{\bphi}$ in the following.
Using this notation, we obtain:

\begin{Pro}

The gradient inequality \eqref{GIp} is equivalent to
    \begin{align}\label{GIvar}
            \gamma\eps\left(\nabla\bphi,\nabla(\tilde{\bphi}-\bphi)\right)_{L^2}
        \ge
            \left(\B{f},\tilde{\bphi}-\bphi\right)_{L^2}
        \quad \text{for all $\tilde{\bphi}\in \B{\mathcal{G}^{\B{m}}}$}.
    \end{align}
\end{Pro}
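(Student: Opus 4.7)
The plan is to verify that \eqref{GIvar} is obtained from \eqref{GIp} by rewriting every term as an $L^2$-inner product and collecting everything except the gradient contribution on the right-hand side; since each manipulation is an identity, the two formulations will automatically be equivalent.

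First, I would handle the elasticity-type contribution. The computation carried out in the paragraph preceding the proposition shows that, for any $\B{\eta}\in L^{\infty}(\Omega;\R^N)$,
\begin{align*}
    \big\langle\mathcal{E}(\B{w}_{n_r}^{\bphi}),\mathcal{E}(\B{w}_{n_r}^{\bphi})\big\rangle_{\mathbb{C}^{\prime}(\bphi)\B{\eta}}
    =\big(\mathbb{C}^{\prime}(\bphi)\mathcal{E}(\B{w}_{n_r}^{\bphi}):\mathcal{E}(\B{w}_{n_r}^{\bphi}),\B{\eta}\big),
\end{align*}
and the choice $\B{\eta}=\tilde{\bphi}-\bphi\in H^1(\Omega;\R^N)\cap L^{\infty}(\Omega;\R^N)$ is admissible. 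Analogously, expanding the derivative $\rho^{\prime}(\bphi)(\tilde{\bphi}-\bphi)$ componentwise and using Fubini yields
\begin{align*}
    \int_{\Omega}\rho^{\prime}(\bphi)(\tilde{\bphi}-\bphi)\,\big|\B{w}_{n_r}^{\bphi}\big|^{2}\,dx
    =\big(\rho^{\prime}(\bphi)\,\B{w}_{n_r}^{\bphi}\cdot\B{w}_{n_r}^{\bphi},\,\tilde{\bphi}-\bphi\big).
\end{align*}
The last non-gradient term is $\int_{\Omega}\psi_0^{\prime}(\bphi)(\tilde{\bphi}-\bphi)\,dx=(\psi_0^{\prime}(\bphi),\tilde{\bphi}-\bphi)$, directly by definition of the $L^2$-pairing.

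Second, I would insert these identities into \eqref{GIp}, keeping the gradient term as $\gamma\eps(\nabla\bphi,\nabla(\tilde{\bphi}-\bphi))$, and then move every term that is linear in $\tilde{\bphi}-\bphi$ other than the gradient term to the right-hand side. Comparing the resulting prefactor of $\tilde{\bphi}-\bphi$ with the definition \eqref{DEF:FPHI} of $\B{f}^{\bphi}$, one reads off exactly $-\B{f}^{\bphi}$, so that \eqref{GIp} becomes
\begin{align*}
    \gamma\eps\,(\nabla\bphi,\nabla(\tilde{\bphi}-\bphi))+(-\B{f}^{\bphi},\tilde{\bphi}-\bphi)\ge 0,
\end{align*}
which is \eqref{GIvar}. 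All steps are equalities, so the same manipulations in reverse recover \eqref{GIp} from \eqref{GIvar}, and the equivalence follows.

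There is no real obstacle here beyond careful bookkeeping with the vector-valued nature of $\bphi$ and the fourth-order tensor $\C^{\prime}(\bphi)$; the only place where an assumption is invoked is the well-definedness of the $L^2$-pairing on the right of \eqref{GIvar}. This requires $\B{f}^{\bphi}\in L^2(\Omega;\R^N)$, which in turn needs $\mathcal{E}(\B{w}_{n_r}^{\bphi}):\mathcal{E}(\B{w}_{n_r}^{\bphi})\in L^2(\Omega)$ and $|\B{w}_{n_r}^{\bphi}|^2\in L^2(\Omega)$; both are guaranteed by the regularity assumption \ref{ASS:RegEig} combined with \ref{ASS:B:1} and \ref{ASS:C:1}, as already noted in the text preceding \eqref{DEF:FPHI}. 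I would mention this briefly to justify that the right-hand side of \eqref{GIvar} is meaningful.
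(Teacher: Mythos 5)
Your proposal is correct and follows essentially the same route as the paper: the paper's own justification is precisely the computation preceding the proposition, rewriting the elasticity, density and potential terms as $L^2$-pairings so that \eqref{GIp} reads $\gamma\eps(\nabla\bphi,\nabla(\tilde{\bphi}-\bphi))-(\B{f}^{\bphi},\tilde{\bphi}-\bphi)\ge 0$ by the definition \eqref{DEF:FPHI}, with assumption \ref{ASS:RegEig} (together with \ref{ASS:B:1}, \ref{ASS:C:1}) guaranteeing $\B{f}^{\bphi}\in L^2(\Omega;\R^N)$. Your remark that all steps are identities, hence reversible, correctly yields the claimed equivalence.
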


\subsection{The regularized problem and its limit}
Now that we have introduced the regularized potential and suitable orthogonal projections, and have made the necessary regularity assumption, we can formulate a regularized problem which will approximate our initially fixed solution $\bphi\in \B{\mathcal{G}}^{\B{m}}$ of \eqref{GIp} in order to provide the desired $H^2$-regularity of $\bphi$.

Using all the previously introduced notation, we are now in a position to state the so-called regularized problem.
\begin{Def}
Let 
\begin{align}
    \label{DEF:GMT}
    \tilde{\B{\mathcal{G}}}^{\B{m}}\coloneqq \left\{ \tilde{\bphi}\in \;\HN\left|\; \fint_{\Omega}\tilde{\bphi}=\B{m}\;\,\text{and}\;\;\sum_{i=1}^{N}\tilde{\varphi}^i = 1 \text{\,a.e.~in }\Omega\right.\right\}.
\end{align}
We say that $\pd\in \tilde{\B{\mathcal{G}}}^{\B{m}}$ is a solution to the regularized problem if it solves

\begin{align*}\tag{$RE$}\label{reg}
        \gamma\varepsilon \left(\nabla\pd,\nabla \B{\eta}\right)
        +\frac{\gamma}{\delta\varepsilon}\left(P[\hat{\B{\phi}}(\pd)],\B{\eta}\right)=
        \left(P\B{f},\B{\eta}\right)\quad \text{for all $\B{\eta}\in \HN$}.
\end{align*}
\end{Def}

Before proving the existence of a solution to \eqref{reg}, we recall some properties proven in \cite{Sarbu} that will be important for the upcoming analysis.

\pagebreak[2]

\begin{Pro}\label{propsi}
Let $\hat{\psi}$ be as defined in \eqref{defHat}. Then the following properties hold true.

\begin{enumerate}[label = \textnormal{(\alph*)}, leftmargin=*]
        \item The weak derivative fulfills
          \begin{align}\label{prepsid}
             \nabla\hat{\psi}=\hat{\B{\phi}}
          \end{align}
          where, for $\B{\xi}\in \mathbb{R}^N$,
         $\hat{\phi}^i(\B{\xi})\coloneqq\hat{\phi}^i({\xi}^i)\coloneqq2\left[\xi^i\right]_{-}$ with $[{s}]_{-}\coloneqq \min(s,0)$ for all $s\in \mathbb{R}$.
         \item \textbf{Monotonicity:} $\hat{\B{\phi}}$ is non-decreasing in each component, i.e.,
         \begin{align}\label{phmon}
            0\le \left(\hat{\phi}^i(r)-\hat{\phi}^i(s)\right)(r-s)
         \end{align}
         for all $r,s\in\mathbb{R}$ and $i=1,\dots,N$.\\
         \item \textbf{Convexity:} $\hat{\psi}$ is convex, i.e.,
        \begin{align}\label{gpsip}
            (\B{\xi}-\B{\eta})\cdot \hat{\B{\phi}}(\B{\eta})
            \le \hat{\psi}(\B{\xi})-\hat{\psi}(\B{\eta})
        \end{align}
        for all $\B{\xi},\B{\eta}\in\mathbb{R}^N$.
\end{enumerate}
\end{Pro}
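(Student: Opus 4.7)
The plan is to reduce everything to a one-dimensional analysis of the scalar function $g(s) := (\min(s,0))^2 = ([s]_-)^2$, since $\hat\psi(\B\xi)=\sum_{i=1}^N g(\xi^i)$ is just a sum of such scalar functions applied componentwise. This decoupling means the gradient, monotonicity, and convexity claims each follow from the corresponding one-dimensional facts about $g$ and its derivative $g'(s)=2[s]_-$.

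For part (a), I would first verify that $g$ is $C^1(\R)$ with classical derivative $g'(s)=2[s]_-$. On $\{s>0\}$, $g\equiv 0$, so $g'=0$; on $\{s<0\}$, $g(s)=s^2$, so $g'(s)=2s$; at $s=0$, the left and right derivatives both vanish, matching $2[0]_-=0$. Hence $g\in C^1(\R)$ and in particular is locally Lipschitz. Since $\hat\psi$ is a finite sum of compositions of $g$ with the coordinate projections $\bphi\mapsto \varphi^i$, it is itself $C^1$, and by componentwise differentiation $\partial_i\hat\psi(\bphi)=g'(\varphi^i)=2[\varphi^i]_-=\hat\phi^i(\bphi)$, which gives $\nabla\hat\psi=\hat{\B\phi}$. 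Since $\hat\psi\in C^1$, the weak gradient agrees with the classical one.

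For part (b), monotonicity of $\hat\phi^i$ in its $i$-th argument is immediate: the function $s\mapsto\min(s,0)$ is non-decreasing (it is the minimum of two non-decreasing functions), so $s\mapsto 2[s]_-$ is non-decreasing, which gives $(\hat\phi^i(r)-\hat\phi^i(s))(r-s)\ge 0$.

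For part (c), I would first show that $g$ is convex on $\R$: on $(-\infty,0]$ it equals $s^2$ which is convex, on $[0,\infty)$ it is identically zero hence convex, and the one-sided derivatives at the junction $s=0$ match (both equal $0$), so the standard gluing criterion gives convexity on all of $\R$. Then $\hat\psi$ is convex as a sum of convex functions of the individual components. Since $\hat\psi$ is additionally $C^1$ by part (a), the first-order subgradient characterization of convex differentiable functions gives $\hat\psi(\B\xi)-\hat\psi(\B\eta)\ge \nabla\hat\psi(\B\eta)\cdot(\B\xi-\B\eta)=\hat{\B\phi}(\B\eta)\cdot(\B\xi-\B\eta)$ for all $\B\xi,\B\eta\in\R^N$, which is exactly \eqref{gpsip}.

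The only delicate point, and the one I would write out carefully, is the regularity of $g$ at $s=0$ in part (a); everything else is a routine consequence of the componentwise structure together with standard facts about convex $C^1$ functions. No high-powered regularity theory is needed, since the formula $g'(s)=2[s]_-$ is continuous and the gluing of the two branches is smooth enough to give a genuine $C^1$ function rather than merely an a.e.\ identity.
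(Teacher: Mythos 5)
Your proof is correct. Note that the paper does not actually prove this proposition at all: it is stated as a recollection of properties established in the cited reference [Sarbu], so there is no in-paper argument to compare against. Your componentwise reduction to the scalar function $g(s)=([s]_-)^2$ — checking $g\in C^1(\mathbb{R})$ with $g'(s)=2[s]_-$ by matching the one-sided derivatives at $s=0$, deducing (b) from the monotonicity of $s\mapsto\min(s,0)$, and obtaining (c) from convexity of $g$ on each half-line with matching derivatives at the junction together with the first-order characterization of convex $C^1$ functions — is a complete and elementary verification that fully substitutes for the external citation.
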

Using these results, we now prove the well-posedness result for the regularized problem. 
In order to show $H^2$-regularity of the solution $\pd$, we need the following regularity assumption on the design domain which shall hold for the rest of the paper.

\begin{enumerate}[label = (D), leftmargin=*]
\item \label{ass:Omega} In addition to \ref{ASS:A:1}, we assume that $\Omega$ has at least one of the following properties:
\begin{enumerate}[label = (\roman*)]
    \item The boundary $\partial\Omega$ is of class $C^{1,1}$.
    \item $\Omega$ is convex.
\end{enumerate}
\end{enumerate}

The well-posedness result for the regularized problem \eqref{reg} reads as follows.

\begin{Lem}\label{Exreg}
    For $\delta>0$ there exists a unique solution $\pd \in \tilde{\B{\mathcal{G}}}^{\B{m}}\subset \HN$ of \eqref{reg}. 
    The solution possesses the regularity $\pd \in H^2(\Omega;\mathbb{R}^N)$ and it holds
    \begin{alignat}{3}\label{RegPw}\tag{$PRE$}
    \begin{aligned}
        -\Delta \pd&=
        -\frac{1}{\delta\varepsilon^2}P[\hat{\B{\phi}}(\pd)]
        +\frac{1}{\gamma\varepsilon}P\B{f}\quad &&\text{a.e.~in }\Omega\\
        \nabla\pd \, \B{n}&=\B{0}\quad &&\text{a.e.~on\;}\partial\Omega,
    \end{aligned}
    \end{alignat}
\end{Lem}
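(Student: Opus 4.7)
The plan is to recognise \eqref{reg} as the Euler--Lagrange equation of a coercive, strictly convex functional on the affine constraint set $\tilde{\B{\mathcal{G}}}^{\B{m}}$, use the direct method of the calculus of variations for existence and uniqueness, and then bootstrap to $H^2$ via standard regularity for the Neumann Laplacian.

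\textbf{Step 1 (Variational formulation).} I would consider the functional
\begin{equation*}
    F(\B{\varphi}) \;=\; \frac{\gamma\varepsilon}{2}\int_\Omega|\nabla\B{\varphi}|^2\,\textup{d}x \;+\; \frac{\gamma}{\delta\varepsilon}\int_\Omega \hat\psi(\B{\varphi})\,\textup{d}x \;-\; \int_\Omega \B{f}\cdot\B{\varphi}\,\textup{d}x
\end{equation*}
on the weakly closed, convex, affine set $\tilde{\B{\mathcal{G}}}^{\B{m}}\subset H^1(\Omega;\R^N)$. Convexity of $\hat\psi$ (Proposition~\ref{propsi}~(c)) together with the quadratic Dirichlet term makes $F$ weakly lower semicontinuous and strictly convex, and the fixed mean $\fint\bphi=\B{m}$ activates Poincaré--Wirtinger, yielding coercivity in $H^1(\Omega;\R^N)$. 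The direct method then produces a unique minimiser $\pd\in\tilde{\B{\mathcal{G}}}^{\B{m}}$.

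\textbf{Step 2 (Euler--Lagrange via the projection $P$).} Admissible variations at $\pd$ are precisely the elements of the tangent space $T:=\{\B{\eta}\in H^1(\Omega;\R^N) \suchthat \int_\Omega\B{\eta}\,\textup{d}x=\B{0},\; \sum_{i=1}^N\eta^i=0\text{ a.e.}\}$, i.e.\ the range of $P=P_{\int}\circ P_{T\Sigma}$. Differentiating $F(\pd+s\B{\eta})$ at $s=0$ (using $\nabla\hat\psi=\hat{\B{\phi}}$ from Proposition~\ref{propsi}~(a)) produces
\begin{equation*}
    \gamma\varepsilon\bigl(\nabla\pd,\nabla\B{\eta}\bigr) + \tfrac{\gamma}{\delta\varepsilon}\bigl(\hat{\B{\phi}}(\pd),\B{\eta}\bigr) = \bigl(\B{f},\B{\eta}\bigr) \qquad \text{for all }\B{\eta}\in T.
\end{equation*}
To extend this to arbitrary $\B{\eta}\in H^1(\Omega;\R^N)$, I substitute $P\B{\eta}$ for $\B{\eta}$: self-adjointness of $P$ transfers the projection onto $\hat{\B{\phi}}(\pd)$ and $\B{f}$, while a short computation exploiting $\sum_i\partial_k\pd^i=0$ (a pointwise consequence of $\sum_i\pd^i=1$) establishes $(\nabla\pd,\nabla P\B{\eta})=(\nabla\pd,\nabla\B{\eta})$. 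This is exactly \eqref{reg}. Conversely, strict convexity of $F$ shows that any solution of \eqref{reg} in $\tilde{\B{\mathcal{G}}}^{\B{m}}$ minimises $F$, upgrading uniqueness from minimisers to solutions.

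\textbf{Step 3 ($H^2$-regularity and pointwise form).} Rewriting \eqref{reg}, $\pd$ is a distributional solution of
\begin{equation*}
    -\gamma\varepsilon\,\Delta\pd \;=\; P\B{f} \;-\; \tfrac{\gamma}{\delta\varepsilon}\,P\hat{\B{\phi}}(\pd) \qquad\text{in }\Omega,
\end{equation*}
coupled with the natural Neumann condition $\nabla\pd\,\B{n}=\B{0}$ on $\partial\Omega$, which arises because the test space in \eqref{reg} has unrestricted trace. The right-hand side lies in $L^2(\Omega;\R^N)$: $P\B{f}\in L^2$ thanks to assumption \ref{ASS:RegEig} built into the definition \eqref{DEF:FPHI} of $\B{f}$, and $P\hat{\B{\phi}}(\pd)\in L^2$ because $\hat{\B{\phi}}$ is globally Lipschitz and $\pd\in H^1\hookrightarrow L^2$. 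Applying the standard $H^2$-regularity theory for the Neumann Laplacian componentwise---admissible under \ref{ass:Omega} (either $C^{1,1}$ boundary or convexity of $\Omega$)---yields $\pd\in H^2(\Omega;\R^N)$, and division by $\gamma\varepsilon$ gives \eqref{RegPw} pointwise almost everywhere.

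\textbf{Main obstacle.} The variational and regularity components are essentially classical; the delicate part is the bookkeeping of the two linear constraints encoded in $\tilde{\B{\mathcal{G}}}^{\B{m}}$. One has to verify that testing with $P\B{\eta}$ in place of $\B{\eta}$ absorbs both Lagrange multipliers exactly into the projections $P\B{f}$ and $P\hat{\B{\phi}}(\pd)$, and that the Dirichlet term is left untouched by virtue of $\sum_i\pd^i=1$. This algebraic identification---rather than any deep PDE estimate---is what I expect to require the most care.
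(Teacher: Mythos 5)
Your proposal is correct and follows essentially the same route as the paper: minimization of the same functional $I_\delta$ over $\tilde{\B{\mathcal{G}}}^{\B{m}}$ via the direct method (coercivity from the fixed mean, weak lower semicontinuity from convexity of $\hat\psi$), the identical projection bookkeeping to pass from variations in the tangent space to arbitrary $\B{\eta}\in\HN$ using self-adjointness of $P$ and the identity $\sum_i\partial_k\varphi_\delta^i=0$, and the same elliptic regularity argument under \ref{ass:Omega} for the $H^2$-statement and \eqref{RegPw}. The only deviation is cosmetic: you obtain uniqueness by showing every solution of \eqref{reg} minimizes the strictly convex $F$ (strictness holding on the fixed-mean constraint set), whereas the paper tests the difference of two solutions and invokes the monotonicity \eqref{phmon} of $\hat{\B{\phi}}$ — two formulations of the same convexity fact.
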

\begin{proof}
First of all, we want to show that there exists at most one solution to \eqref{reg}. To this end, we assume that there are two solutions $\bphi_{\delta,1},\bphi_{\delta,2}\in\tilde{\B{\mathcal{G}}}^{\B{m}}$. Then, by subtracting the corresponding equations, we obtain
\begin{align*}
    \gamma\varepsilon \left(\nabla[\bphi_{\delta,1}-\bphi_{\delta,2}],\nabla \B{\eta}\right)+
    \frac{\gamma}{\varepsilon\delta}\left(P[\hat{{\phi}}(\bphi_{\delta,1})-\hat{{\phi}}(\bphi_{\delta,2})],\B{\eta}\right)=0
\end{align*}
for all $\B{\eta}\in \HN$. Testing with $\bphi_{\delta,1}-\bphi_{\delta,2}\in \LzS\cap L_0^2(\Omega;\mathbb{R}^N)$, we can drop the projection $P$ in the second term. Using the monotonicity property \eqref{phmon}, we infer
\begin{align*}
    \gamma\varepsilon \left(\nabla[\bphi_{\delta,1}-\bphi_{\delta,2}],\nabla [\bphi_{\delta,1}-\bphi_{\delta,2}]\right)\le 0.
\end{align*}
This yields $\bphi_{\delta,1}=\bphi_{\delta,2}$ as these functions have identical mean value.

In order to prove the existence of a solution, we consider a suitable minimization problem. 
Therefore, we define the functional
\begin{alignat}{1}
\begin{aligned}\label{defId}
    I_{\delta}(\B{\xi})\coloneqq& 
    \frac{\gamma\varepsilon}{2}\int_{\Omega}\abs{\nabla \B{\xi}}^2\mathrm{\,d}x
    +\frac{\gamma}{\varepsilon\delta}\int_{\Omega}{\hat{\psi}}(\B{\xi})\mathrm{\,d}x
    -\int_{\Omega}\B{f}\cdot\B{\xi}\mathrm{\,d}x
\end{aligned}
\end{alignat}   
for all $\B{\xi}\in \HN$.
If we can now show that there exists a $\pd\in \tilde{\B{\mathcal{G}}}^{\B{m}}$
that solves the minimization problem
\begin{align}\label{mind}
\underset{\B{\xi}\in \tG}{\min}I_{\delta}(\B{\xi}),
\end{align}
the existence result is proven since then the G\^{a}teaux derivative of $I_{\delta}'(\pd)$, which is given by
\begin{alignat}{2}\label{RegMin}
        I_{\delta}^{\prime}(\pd)\B{\eta}
    &=&&               
    \gamma\varepsilon \left(\nabla\pd,\nabla \B{\eta}\right)
    +\frac{\gamma}{\varepsilon\delta}\left(\hat{\B{\phi}}(\pd),\B{\eta}\right)
    -(\B{f},\B{\eta}),
\end{alignat}
for all directions $\B{\eta}\in \HN\cap L^2_{T\Sigma}(\Omega;\mathbb{R}^N)\cap L^2_{0}(\Omega;\mathbb{R}^N)$, vanishes. By applying the projections $P_{T\Sigma}$ and $P_{\int}$ to any $\B{\eta}\in \HN$ and then switching them to the other component in the $L^2$ scalar product, it follows that solving \eqref{mind} is equivalent to solving \eqref{reg}.

Note that there is no need to project the gradient term. This is justified as follows. 
By construction, we have
\begin{align*}
    P_{T\Sigma}\B{\eta}=\B{\eta}-\left[\frac{1}{N}\sum_{k=1}^N\eta^k\right]\B{1}.
\end{align*}
On the other hand, we compute
\begin{align*}
    \nabla \left[\sum_{k=1}^N\eta^k\B{1}\right]
    =\left(\left[\sum_{k=1}^N\partial_1\eta^k\right]\B{1},\dots,\left[\sum_{k=1}^N\partial_d\eta^k\right]\B{1}\right),
\end{align*}
and therefore, the entries in each column are identical. Now, we compute
\begin{align}\label{gradvan}
    \nabla\pd :\nabla \left[\sum_{k=1}^N\eta^k\B{1}\right]
    =\sum_{i=1}^N \left\{\left[\sum_{k=1}^N\partial_i\eta^k\right]\sum_{j=1}^N\partial_i\varphi_{\delta}^j\right\}.
\end{align}
We see that this term vanishes as by construction, as $\sum_{i=1}^{N}\varphi_{\delta}^i=1$ a.e.~in $\Omega$ because $\pd\in \tilde{\B{\mathcal{G}}}^{\B{m}}$. In other words, $\partial_i \pd\in L^2_{T\Sigma}(\Omega;\mathbb{R}^N)$.

 As the gradient term is invariant under addition of constants we can also omit the projection $P_{\int}$.

It remains to show that there exists a minimizer of \eqref{mind}.
By construction, $\hat{\psi}\ge 0$.
Furthermore, using Young's inequality, we find a constant $C>0$ such that
\begin{align}\label{YAbs}
    \int_{\Omega}\abs{\nabla \B{\xi}}^2\mathrm{\,d}x+\int_{\Omega}\B{f}\cdot \B{\xi}\mathrm{\,d}x\ge -C 
    \quad\text{for all $\B{\xi}\in\tilde{\B{\mathcal{G}}}^{\B{m}}$}.
\end{align}
This is obtained by absorbing the quantity $\norm{\B{\xi}}_{L^2}^2$ by the term $\norm{\nabla \B{\xi}}_{L^2}^2$ which controls the whole $\HN$-norm as all $\B{\xi}\in\tilde{\B{\mathcal{G}}}^{\B{m}}$ have a fixed mean value.
Hence, $I_{\delta}$ is bounded from below on $\tilde{\B{\mathcal{G}}}^{\B{m}}$ and thus,
the infimum exists. Consequently, we find a minimizing sequence $(\bphi_{\delta,k})_{k\in \mathbb{N}}\subset \tG$ such that
\begin{align*}
    \underset{k\to\infty}{\lim}I_{\delta}(\bphi_{\delta,k})=\underset{\bphi\in \tG}{\inf}I_{\delta}(\bphi).
\end{align*}
In particular, $\|\bphi_{\delta,k}\|_{H^1(\Omega;\R^N)}$ remains bounded and thus,
there exists $\pd\in H^1(\Omega;\mathbb{R}^N)$ and such that the convergences
\begin{alignat*}{2}
    \bphi_{\delta,k}&\rightharpoonup \pd &&\quad\text{in\;}\HN,\\
    %\bphi_{\delta,k}&\rightharpoonup \pd &&\quad\text{in\;}L^{p'}(\Omega;\mathbb{R}^N),\\
    \bphi_{\delta,k}&\to\pd &&\quad \text{in\;}L^2(\Omega;\mathbb{R}^N),\\
    \bphi_{\delta,k}&\to\pd &&\quad \text{a.e.~in }\Omega
\end{alignat*}
hold along a non-relabeled subsequence.
From these convergences, we deduce $\pd\in\tilde{\B{\mathcal{G}}}^{\B{m}}$.
Using the estimate $\hat{\psi}(\bphi_{\delta,k})\le |\bphi_{\delta,k}|^2$ a.e.~in $\Omega$ along with the generalized majorized convergence theorem of Lebesgue (see \cite[p.~1015]{Zeidler}), and further employing the weak lower semincontinuity of norms, we conclude
\begin{align*}
    I_{\delta}(\pd)\le \underset{k\to\infty}{\liminf}\,I_{\delta}(\bphi_{\delta,k})
    =\underset{\bphi\in \tG}{\inf}I_{\delta}(\bphi).
\end{align*}
This implies that $\pd\in \tilde{\B{\mathcal{G}}}^{\B{m}}$ is a minimizer.

Now that we have shown the existence of a solution $\pd\in \HN$ to \eqref{reg}, it remains to prove that it possesses the desired regularity $H^2(\Omega;\mathbb{R}^N)$.
Since $\pd$ is a weak solution of \eqref{reg}, it can be interpreted as a weak solution of 
\begin{alignat}{2}
\left\{
\begin{aligned}\label{strongreg}
    -\Delta \pd&=\B{\mathcal{F}}\quad &&\text{in\;}\Omega,\\
    \nabla\pd \, \B{n}&=\B{0}\quad &&\text{on\;}\partial\Omega,
\end{aligned}  
\right.
\end{alignat}
 with
\begin{align}\label{rhs}
\B{\mathcal{F}}=\B{\mathcal{F}}(\pd)=
-\frac{1}{\delta\varepsilon^2}P[\hat{\B{\phi}}(\pd)]
%-p'\abs{\pd}^{p'-2}P_{T\Sigma}P_{\int}\pd
+\frac{1}{\gamma\varepsilon}P\B{f}
    \in L^2(\Omega;\mathbb{R}^N).
\end{align}
Due to assumption \ref{ass:Omega}, elliptic regularity theory (see e.g., \cite[Theorem~2.4.2.7]{Grisvard} in the case \ref{ass:Omega}(i) or \cite[Theorem~3.2.3.1]{Grisvard} in the case \ref{ass:Omega}(ii), respectively) yields $\pd\in H^2(\Omega;\mathbb{R}^N)$.
In particular, using this regularity, we conclude from \eqref{reg} that $\pd$ satisfies \eqref{RegPw}.
\end{proof}
As we want to pass to the limit in the regularized equation, we need some uniform bounds to apply classical compactness results.
\begin{Lem}\label{lem:phi_apriori}
Let $\pd\in H^2(\Omega;\mathbb{R}^N)$ be the solution of \eqref{reg}. Then there exist a constant $C>0$ such that
\begin{align}
    \bnorm{\pd}_{H^2(\Omega;\mathbb{R}^N)}&\le C,\label{boundH2}\\
    \bnorm{\big[\pd\big]_{-}}_{L^2(\Omega;\mathbb{R}^N)}&\le C\delta^{\frac{1}{2}},\label{pdmin}\\
    \frac{1}{\delta}\bnorm{P[\hat{\B{\phi}}(\pd)]}_{L^2(\Omega;\mathbb{R}^N)}&\le C\label{boundpsi},
\end{align}
for all $\delta>0$.
\end{Lem}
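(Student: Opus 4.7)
The plan is to establish the three bounds sequentially: first an energy estimate yielding \eqref{pdmin} and a uniform $H^1$ bound on $\pd$; then a monotonicity-based testing of the strong form \eqref{RegPw} by $\hat{\B{\phi}}(\pd)$ giving \eqref{boundpsi}; and finally elliptic regularity applied to the resulting uniformly bounded right-hand side giving \eqref{boundH2}. Throughout, every constant $C$ is independent of $\delta$, since $\B{f}=\B{f}^{\B{\varphi}}$ is fixed and belongs to $L^2(\Omega;\R^N)$ by assumption~\ref{ASS:RegEig}.

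\textbf{Step 1 (energy estimate).} I would test the weak formulation \eqref{reg} with $\B{\eta}=\pd\in\HN$. Since $P$ is an orthogonal projection and $\pd\in\tG$ satisfies $\fint_\Omega \pd\,\mathrm dx=\B{m}$ as well as $\sum_{i=1}^N\varphi_\delta^i=1$ a.e., a direct computation using $\sum_i m^i=1$ (as $\B{m}\in\Sigma^N$) shows $P\pd=\pd-\B{m}$. Combined with $\hat{\phi}^i(\pd)=2[\varphi_\delta^i]_-$, this yields
\begin{align*}
\big(P[\hat{\B{\phi}}(\pd)],\pd\big)=\big(\hat{\B{\phi}}(\pd),\pd-\B{m}\big)=2\bnorm{[\pd]_-}_{L^2}^2-2\sum_{i=1}^N m^i\int_\Omega[\varphi_\delta^i]_-\,\mathrm dx\ge 2\bnorm{[\pd]_-}_{L^2}^2,
\end{align*}
where the last inequality uses $m^i>0$ and $[\varphi_\delta^i]_-\le 0$ a.e. The right-hand side $(P\B{f},\pd)$ is estimated via Cauchy--Schwarz, Poincar\'e's inequality (using the fixed mean value) and Young's inequality, and the resulting gradient contribution is absorbed. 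This produces
\begin{align*}
\frac{\gamma\varepsilon}{2}\bnorm{\nabla\pd}_{L^2}^2+\frac{2\gamma}{\delta\varepsilon}\bnorm{[\pd]_-}_{L^2}^2\le C,
\end{align*}
which establishes \eqref{pdmin} and, together with the mean value constraint, delivers a uniform $H^1$ bound on $\pd$.

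\textbf{Step 2 (bound on the projected penalization).} For \eqref{boundpsi}, I would multiply the strong equation \eqref{RegPw} pointwise by $\hat{\B{\phi}}(\pd)$ and integrate over $\Omega$. Since $\hat{\B{\phi}}$ is Lipschitz and $\pd\in H^2(\Omega;\R^N)$, the Sobolev chain rule yields $\hat{\B{\phi}}(\pd)\in H^1(\Omega;\R^N)$, and integration by parts together with the Neumann condition $\nabla\pd\,\B{n}=\B{0}$ gives
\begin{align*}
\int_\Omega-\Delta\pd\cdot\hat{\B{\phi}}(\pd)\,\mathrm dx=\sum_{i=1}^N\int_\Omega 2\,\chi_{\{\varphi_\delta^i<0\}}\,|\nabla\varphi_\delta^i|^2\,\mathrm dx\ge 0.
\end{align*}
Exploiting that $P$ is an orthogonal projection, so $\big(P[\hat{\B{\phi}}(\pd)],\hat{\B{\phi}}(\pd)\big)=\bnorm{P[\hat{\B{\phi}}(\pd)]}_{L^2}^2$, equation \eqref{RegPw} then yields
\begin{align*}
\frac{1}{\delta\varepsilon^2}\bnorm{P[\hat{\B{\phi}}(\pd)]}_{L^2}^2\le\frac{1}{\gamma\varepsilon}\big(\B{f},P[\hat{\B{\phi}}(\pd)]\big)\le\frac{1}{\gamma\varepsilon}\bnorm{\B{f}}_{L^2}\bnorm{P[\hat{\B{\phi}}(\pd)]}_{L^2}.
\end{align*}
Dividing by $\bnorm{P[\hat{\B{\phi}}(\pd)]}_{L^2}$ and rearranging gives $\bnorm{P[\hat{\B{\phi}}(\pd)]}_{L^2}\le C\delta$, which is precisely \eqref{boundpsi}.

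\textbf{Step 3 ($H^2$ regularity).} Combining Step 2 with \eqref{RegPw}, the right-hand side $\B{\mathcal{F}}(\pd)=-\tfrac{1}{\delta\varepsilon^2}P[\hat{\B{\phi}}(\pd)]+\tfrac{1}{\gamma\varepsilon}P\B{f}$ is bounded in $L^2(\Omega;\R^N)$ uniformly in $\delta$. Elliptic regularity for the Neumann Laplacian under assumption \ref{ass:Omega} (exactly as used in the proof of Lemma~\ref{Exreg}) then gives
\begin{align*}
\bnorm{\pd}_{H^2}\le C\big(\bnorm{\B{\mathcal{F}}}_{L^2}+\bnorm{\pd}_{L^2}\big)\le C,
\end{align*}
which establishes \eqref{boundH2}. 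The main technical point will be Step 2, where the Sobolev chain rule and the sign of $\int_\Omega\nabla\pd:\nabla\hat{\B{\phi}}(\pd)\,\mathrm dx$ decouple the penalization term from the Laplacian; everything else is a standard combination of energy estimates, Poincar\'e's inequality and elliptic regularity.
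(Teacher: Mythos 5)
Your proposal is correct, and its overall architecture coincides with the paper's proof: \eqref{pdmin} plus a uniform $H^1$ bound first, then \eqref{boundpsi} by pairing the equation with $\hat{\B{\phi}}(\pd)$ and using that $\nabla\pd:\nabla\hat{\B{\phi}}(\pd)\ge 0$ a.e.\ (the Gilbarg--Trudinger chain rule) together with $\big(P[\hat{\B{\phi}}(\pd)],\hat{\B{\phi}}(\pd)\big)=\bnorm{P[\hat{\B{\phi}}(\pd)]}_{L^2}^2$, and finally \eqref{boundH2} by elliptic regularity for the uniformly $L^2$-bounded right-hand side of \eqref{RegPw}. The only genuine deviation is Step~1: the paper obtains the $H^1$ bound and \eqref{pdmin} variationally, by comparing $I_\delta(\pd)\le I_\delta(\B{\xi})$ with a fixed competitor $\B{\xi}\in\B{\mathcal{G}}^{\B{m}}$ for which $\hat{\psi}(\B{\xi})=0$, so the penalization term is controlled directly through the energy; you instead test the Euler--Lagrange equation \eqref{reg} with $\pd$ itself, use the self-adjointness of $P$ and the identity $P\pd=\pd-\B{m}$ (valid since $\pd\in\tilde{\B{\mathcal{G}}}^{\B{m}}$ and $\B{m}\in\Sigma^N$), and exploit the sign of $-\big(\hat{\B{\phi}}(\pd),\B{m}\big)$ to recover $\tfrac1\delta\bnorm{[\pd]_-}_{L^2}^2\le C$. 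Both routes are equally rigorous here; the paper's comparison argument is marginally more robust (it does not need the projection identities or the positivity of $m^i$, only that some admissible non-negative competitor exists), while yours stays entirely at the level of the weak equation. Your Step~2 use of the strong form \eqref{RegPw} plus integration by parts with the Neumann condition is equivalent to the paper's testing of the weak form with $\tfrac1\delta\hat{\B{\phi}}(\pd)$, and Step~3 is identical.
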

\begin{proof}
    By the previous lemma, we know that $\pd$ minimizes $I_{\delta}$ (see \eqref{defId}) over $\tilde{\B{\mathcal{G}}^{\B{m}}}$ (see \eqref{DEF:GMT}). Thus, we have
    \begin{align*}
        I_{\delta}(\pd)\le I_{\delta}(\B{\xi}), 
        \quad \text{for all $\B{\xi}\in \tilde{\B{\mathcal{G}}}^{\B{m}}$}.
    \end{align*}
    If we now choose any $\B{\xi}\in\B{\mathcal{G}}^{\B{m}}\subset \tilde{\B{\mathcal{G}}}^{\B{m}}$, we know that it is additionally componentwise non-negative and therefore $\hat{\psi}(\B{\xi})=0$ a.e.~in $\Omega$. In view of definition \eqref{defId}, this yields
    \begin{align}\label{boundpd}
    \begin{aligned}
            &\frac{\gamma\varepsilon}{2}\int_{\Omega}\babs{\nabla{\pd}}^2\mathrm{\,d}x
             +\frac{\gamma}{\varepsilon\delta}\int_{\Omega}{\hat{\psi}}({\pd})
         -\int_{\Omega}\B{f}\cdot{\pd}\mathrm{\,d}x\\
        &\quad \le
            \frac{\gamma\varepsilon}{2}\int_{\Omega}\abs{\nabla{\B{\xi}}}^2\mathrm{\,d}x
            -\int_{\Omega}\B{f}\cdot{\B{\xi}}\mathrm{\,d}x\\
        &\quad \le
            C,
    \end{aligned}        
    \end{align}
 where $C>0$ is a constant independent of $\delta$. Recalling the absorption trick \eqref{YAbs}, we obtain
 \begin{align}\label{pdH1}
    \bnorm{\pd}_{\HN}\le C,
 \end{align}
which will be needed in the end of the proof. 
Furthermore, using the definition of $\hat{\psi}$ (see \eqref{defHat}), we deduce that
 \begin{align*}
    \sum_{i=1}^{N}\bnorm{\big[\varphi_{\delta}^i\big]_{-}}_{L^2}^2\le C\delta,
 \end{align*}
 which directly leads to \eqref{pdmin}.

We notice that $\frac{1}{\delta}\hat{\B{\phi}}(\pd)$ is weakly differentiable (cf.~\cite[Lemma~7.6]{Gilbarg}) and belongs to $\HN$.
In order to prove \eqref{boundpsi}, we test \eqref{reg} with $\B{\eta}=\frac{1}{\delta}\hat{\B{\phi}}(\pd)$. 
We obtain
\begin{alignat}{2}\label{testphi}
    \begin{aligned}
        &\frac{\gamma}{\varepsilon\delta}\left(\nabla\pd,\nabla 
            \hat{\B{\phi}}(\pd)\right)
            +\frac{\gamma}{\delta^2\varepsilon}\int_{\Omega}\babs{P[\hat{\B{\phi}}(\pd)]}^2\mathrm{\,d}x\\
        &\quad=
                \frac{1}{\delta}\int_{\Omega}\B{f}\cdot 
                P[\hat{\B{\phi}}(\pd)]
                \mathrm{\,d}x.
    \end{aligned}
\end{alignat}
Applying \cite[Lemma 7.6]{Gilbarg} to $\hat{\B{\phi}}$, we further deduce
\begin{align*}
\frac{\gamma}{\varepsilon\delta}\left(\nabla\pd,\nabla 
         \hat{\B{\phi}}(\pd)   
            \right)\ge 0
\end{align*}
since for a.e. $x$ in $\Omega$ either $\nabla \hat{\B{\phi}}(\pd)(x)=\B{0}$ or $\nabla \hat{\B{\phi}}(\pd)(x)=\nabla \pd(x)$.

Applying Hölder's inequality in \eqref{testphi} and an absorption argument, which crucially requires $\B{f}\in L^2(\Omega;\R^N)$, we thus infer
\begin{align*}
\frac{\gamma}{\delta^2\varepsilon}\bnorm{P[\hat{\B{\phi}}(\pd)]}_{L^2}^{2}\le
\frac{C}{\delta}\bnorm{P[\hat{\B{\phi}}(\pd)]}_{L^{2}},
\end{align*}
and thus,
 \begin{align*}
    \frac{1}{\delta}
    \bnorm{P\big[\hat{\B{\phi}}(\pd)\big]}_{L^2}
  \le
     C.
 \end{align*}

As we now have bounded both the right-hand side of \eqref{RegPw} and $\pd$ itself in $\LN{2}$ uniformly in $\delta$ (see \eqref{pdH1}), we can again apply elliptic regularity theory (see \cite[Theorem 2.3.1.5]{Grisvard} or \cite[Theorem~3.2.3.1]{Grisvard}) to deduce \eqref{boundH2}.
\end{proof}

In order to reformulate \eqref{RegPw} by means of Lagrange multipliers that are expected to converge in the weak sense, we need to get rid of the projection in \eqref{boundpsi}. This is done analogously to \cite[Theorem 2.1]{Sarbu} and therefore, we only present the statement of the result without a proof.

\begin{Lem} \label{LEM:PHIHAT}
There exists a constant $C>0$ such that
\begin{align}\label{boundhp}
    \frac{1}{\delta}\bnorm{\hat{\B{\phi}}(\pd)}_{L^2}\le C
    \quad\text{for all $\delta>0$.}
\end{align}
\end{Lem}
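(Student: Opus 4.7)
The plan is to upgrade the bound \eqref{boundpsi}, which controls only the projected quantity $\tfrac{1}{\delta}P[\hat{\B{\phi}}(\pd)]$ in $L^2(\Omega;\R^N)$, to a bound on the full expression $\tfrac{1}{\delta}\hat{\B{\phi}}(\pd)$. The natural first step is to decompose
\begin{align*}
\hat{\B{\phi}}(\pd)=P[\hat{\B{\phi}}(\pd)]+(I-P)[\hat{\B{\phi}}(\pd)],
\end{align*}
so that by \eqref{boundpsi} it suffices to bound the second summand uniformly in $\delta$.

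Since $P=P_{T\Sigma}\circ P_{\int}$ is an $L^2$-orthogonal projection whose image is $L^2_{T\Sigma}(\Omega;\R^N)\cap L^2_0(\Omega;\R^N)$, a direct computation yields
\begin{align*}
(I-P)[\hat{\B{\phi}}(\pd)]=\B{c}_\delta+\chi_\delta\B{1},
\end{align*}
where $\B{c}_\delta=\fint_\Omega\hat{\B{\phi}}(\pd)\,dx\in\R^N$ is a constant vector and $\chi_\delta=\tfrac{1}{N}\sum_{i=1}^N\hat{\phi}^i(\varphi_\delta^i)-\fint_\Omega\tfrac{1}{N}\sum_{i=1}^N\hat{\phi}^i(\varphi_\delta^i)\,dy$ is a zero-mean scalar function. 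Conceptually, $\tfrac{1}{\delta}\B{c}_\delta$ and $\tfrac{1}{\delta}\chi_\delta$ play the role of Lagrange multipliers for the mean-value constraint $\fint\bphi=\B{m}$ and the pointwise simplex constraint $\sum_{i=1}^N\varphi^i=1$, respectively, and the goal is to bound them uniformly in $\delta$.

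A naive attempt based on \eqref{pdmin} yields only the suboptimal rate $\tfrac{1}{\delta}|\B{c}_\delta|,\,\tfrac{1}{\delta}\|\chi_\delta\|_{L^2}=O(\delta^{-1/2})$, so a sharper, dual-type argument is required. Following \cite[Theorem~2.1]{Sarbu}, the strategy is to work with the Lagrange-multiplier reformulation of \eqref{reg} obtained by testing with arbitrary $\B{\eta}\in H^1(\Omega;\R^N)$ rather than only with those in the image of $P$. This introduces $\tfrac{1}{\delta}\B{c}_\delta$ and $\tfrac{1}{\delta}\chi_\delta$ as explicit coefficients in the equation, which can then be isolated by testing against constant vectors and against functions of the form $\xi(x)\B{1}$, respectively. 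Combining this reformulation with the monotonicity and convexity of $\hat{\B{\phi}}$ from Proposition~\ref{propsi}, the pointwise sign $\hat{\B{\phi}}(\pd)\le\B{0}$, the identity $\sum_i[\varphi_\delta^i]_-=1-\sum_i[\varphi_\delta^i]_+$ implied by $\pd\in\tilde{\B{\mathcal{G}}}^{\B{m}}$, and the $L^2$-boundedness of $\B{f}$ (which stems from assumption \ref{ASS:RegEig}), one absorbs the unfavorable terms into $\|\B{f}\|_{L^2}$ and derives the optimal rate for the two multipliers.

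The main obstacle is exactly this sharpening from $\delta^{1/2}$ to $\delta$ for the Lagrange multipliers, which is inaccessible from the energy bound \eqref{pdmin} alone and relies crucially on the monotone structure of $\hat{\B{\phi}}$ and the rigid interplay of the simplex and mean-value constraints with the PDE \eqref{reg}. Once $\tfrac{1}{\delta}\|(I-P)[\hat{\B{\phi}}(\pd)]\|_{L^2}\le C$ is established, combining it with \eqref{boundpsi} via the triangle inequality yields \eqref{boundhp}.
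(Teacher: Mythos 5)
Your opening reduction is fine (split $\hat{\B{\phi}}(\pd)$ into $P[\hat{\B{\phi}}(\pd)]$ plus a constant vector and a multiple of $\B{1}$, and note that \eqref{pdmin} only gives the rate $\delta^{-1/2}$), but the mechanism you propose for the decisive step does not work. Testing \eqref{reg} (or its tautological rewriting with $\tfrac1\delta\B{c}_\delta$ and $\tfrac1\delta\chi_\delta$ inserted) with constant vectors or with functions of the form $\xi(x)\B{1}$ yields only the trivial identity $0=0$: such $\B{\eta}$ lie in the kernel of $P$, so $(P[\hat{\B{\phi}}(\pd)],\B{\eta})=(\hat{\B{\phi}}(\pd),P\B{\eta})=0$ and $(P\B{f},\B{\eta})=0$, while the gradient term vanishes as well because $\partial_i\pd\in T\Sigma$ (cf.\ \eqref{gradvan}) and constants have zero gradient. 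Equivalently, if you first rewrite the equation with the two multipliers made explicit, testing with these directions reproduces the defining identities of $\B{c}_\delta$ and $\chi_\delta$ and gives no bound. Moreover, appealing to ``the Lagrange-multiplier reformulation'' is circular in this paper's logic: the multipliers are only constructed and shown to converge in Theorem~\ref{Thm:Lagr}, \emph{after} and \emph{because of} Lemma~\ref{LEM:PHIHAT}. So the heart of the lemma --- upgrading from $\delta^{-1/2}$ to a uniform bound for the complementary part --- is not actually established by your argument.

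What is missing are the two structural inputs that make the Sarbu-type argument (which the paper invokes without proof) run. First, a pointwise estimate coming from the simplex constraint: since $\sum_{i=1}^N\varphi_\delta^i=1$ a.e., at a.e.\ point at least one component satisfies $\varphi_\delta^i\ge 1/N>0$, hence $\hat{\phi}^i(\varphi_\delta^i)=0$ there; looking at that component of $P_{T\Sigma}[\hat{\B{\phi}}(\pd)]$ gives $\babs{\tfrac1N\sum_j\hat{\phi}^j(\varphi_\delta^j)}\le \babs{P_{T\Sigma}[\hat{\B{\phi}}(\pd)]}$ and therefore $\babs{\hat{\B{\phi}}(\pd)}\le C_N\babs{P_{T\Sigma}[\hat{\B{\phi}}(\pd)]}$ pointwise, which controls the $\chi_\delta\B{1}$ part by the projected quantity up to a constant vector. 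Second, a bound on the mean, $\tfrac1\delta\babs{\fint_\Omega\hat{\B{\phi}}(\pd)\mathrm{\,d}x}\le C$: this follows, e.g., by testing \eqref{reg} with $\B{\eta}=\pd$ (a direction \emph{not} in $\ker P$, with $P\pd=\pd-\B{m}$) and using the sign structure $\big(\hat{\phi}^i(\varphi_\delta^i),\varphi_\delta^i-m^i\big)\ge 2m^i\bnorm{[\varphi_\delta^i]_-}_{L^1}$ with $m^i>0$, together with the uniform $H^1$ bound \eqref{pdH1} and $\B{f}\in L^2$, which yields $\tfrac1\delta\bnorm{\hat{\B{\phi}}(\pd)}_{L^1}\le C$. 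Combining these two facts with \eqref{boundpsi} gives \eqref{boundhp}; without them (or an equivalent duality argument), your proposal remains a plan whose central step fails.
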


Now, we introduce suitable Lagrange multipliers and pass to the limit in the the regularized equation.
\begin{Thm}\label{Thm:Lagr}
The initially chosen solution $\bphi\in \B{\mathcal{G}}^{\B{m}}$ of \eqref{GIp} possesses the regularity $\bphi\in H^2(\Omega;\mathbb{R}^N)$. Furthermore, there are Lagrange multipliers $\B{\Lambda}, \B{\mu}\in L^2(\Omega;\R^N)$ and $ \B{\vartheta}\in \R^N$ such that
\begin{alignat}{3}\label{PhStr}\tag{$GS^\eps$}
     \begin{aligned}
             -\gamma\varepsilon \Delta \bphi
         &=
             \frac{1}{\varepsilon}
             (\B{\Lambda}+\B{\vartheta}+\B{\mu})+P_{T\Sigma}\B{f}^{\bphi}
         &&\quad\text{a.e.~in }\Omega,\\
         \nabla\bphi\B{n}&=\B{0}
         &&\quad \text{on\;}\partial\Omega.
     \end{aligned}    
     \end{alignat}
    with
    \begin{alignat}{2}
        \Lambda^i&=\Lambda^j
        &&\quad\text{for all $i,j\in\{1,\dots,N\}$},\label{LamEqComp}\\
        \mu^i\ge 0\text{ and }\mu^i\varphi^i&=0 
        &&\quad\text{a.e.~in $\Omega$ for all $i\in\{1,\dots,N\}$},\label{muphiVan0}\\
        \sum_{i=1}^{N} \vartheta^i&=0.\label{thetaSum0}
    \end{alignat}
\end{Thm}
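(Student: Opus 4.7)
The plan is to exploit the regularization framework developed in Lemmas~\ref{Exreg}, \ref{lem:phi_apriori} and \ref{LEM:PHIHAT}. For each $\delta>0$, let $\pd\in H^2(\Omega;\mathbb{R}^N)$ denote the unique solution of the pointwise regularized problem \eqref{RegPw} associated with the fixed data $\B{f}=\B{f}^{\bphi}$. The uniform bounds \eqref{boundH2}, \eqref{pdmin} and \eqref{boundhp} allow me to extract a non-relabelled subsequence such that $\pd\rightharpoonup\tilde{\bphi}$ in $H^2(\Omega;\mathbb{R}^N)$ (hence strongly in $H^1$ and pointwise a.e.) and $-\tfrac{1}{\delta}\hat{\B{\phi}}(\pd)\rightharpoonup\B{\xi}$ weakly in $L^2(\Omega;\mathbb{R}^N)$ for some $\B{\xi}\ge\B{0}$ (non-negativity being preserved as a weak limit of non-negative functions). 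Passing to the a.e.\ limit in $[\pd]_-$ forces $[\tilde{\bphi}]_-=\B{0}$, and since the pointwise-sum and mean-value constraints pass to the limit as well, $\tilde{\bphi}\in\B{\mathcal{G}}^{\B{m}}$.

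To see that $\tilde{\bphi}=\bphi$, I test \eqref{reg} with $\B{\eta}=\tilde{\B{\zeta}}-\pd$ for an arbitrary $\tilde{\B{\zeta}}\in\B{\mathcal{G}}^{\B{m}}\subset\tilde{\B{\mathcal{G}}}^{\B{m}}$. Since this direction lies in $L^2_0(\Omega;\mathbb{R}^N)\cap L^2_{T\Sigma}(\Omega;\mathbb{R}^N)$, the projection $P$ may be dropped in the regularization term; convexity of $\hat{\psi}$ via \eqref{gpsip}, together with $\hat{\psi}(\tilde{\B{\zeta}})=0$ and $\hat{\psi}(\pd)\ge 0$, then gives $\big(\hat{\B{\phi}}(\pd),\tilde{\B{\zeta}}-\pd\big)\le 0$. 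Dropping this non-positive contribution yields $\gamma\eps\big(\nabla\pd,\nabla(\tilde{\B{\zeta}}-\pd)\big)\ge(\B{f},\tilde{\B{\zeta}}-\pd)$, and a $\limsup$-argument using weak $H^1$-convergence and lower semicontinuity of $\|\nabla\cdot\|_{L^2}^2$ passes this inequality to the limit, so that $\tilde{\bphi}$ solves \eqref{GIvar}. Because the functional $\tilde{\bphi}\mapsto\tfrac{\gamma\eps}{2}\|\nabla\tilde{\bphi}\|_{L^2}^2-(\B{f},\tilde{\bphi})$ is strictly convex on the affine space of mean-$\B{m}$ functions in $H^1$, its minimizer in $\B{\mathcal{G}}^{\B{m}}$ is unique, whence $\tilde{\bphi}=\bphi$. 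In particular $\bphi\in H^2(\Omega;\mathbb{R}^N)$, and the Neumann condition in \eqref{PhStr} follows from weak continuity of the trace map on $H^2$.

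Multiplying \eqref{RegPw} by $\gamma\eps$ and passing to the limit $\delta\to 0$ yields
\begin{equation*}
-\gamma\eps\Delta\bphi=\tfrac{\gamma}{\eps}\,P\B{\xi}+P\B{f}^{\bphi}\qquad\text{a.e.\ in }\Omega.
\end{equation*}
Writing $\overline{\B{u}}:=\fint_{\Omega}\B{u}\,\mathrm dx$, a direct computation gives $P\B{u}=\B{u}-\tfrac{1}{N}\big(\sum_{i=1}^{N}u^i\big)\B{1}-\overline{\B{u}}+\tfrac{1}{N}\overline{\sum_{i=1}^{N}u^i}\,\B{1}$ and $P\B{f}^{\bphi}=P_{T\Sigma}\B{f}^{\bphi}-\overline{P_{T\Sigma}\B{f}^{\bphi}}$. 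Applying these identities to $\B{\xi}$ and to $\B{f}^{\bphi}$, I set
\begin{align*}
\B{\mu}&:=\gamma\B{\xi},\qquad \B{\Lambda}(x):=-\tfrac{\gamma}{N}\Big(\sum_{i=1}^{N}\xi^i(x)\Big)\B{1},\\
\B{\vartheta}&:=-\gamma\,\overline{\B{\xi}}+\tfrac{\gamma}{N}\,\overline{\sum_{i=1}^{N}\xi^i}\,\B{1}-\eps\,\overline{P_{T\Sigma}\B{f}^{\bphi}},
\end{align*}
which gives \eqref{PhStr}. By construction $\B{\Lambda}\in L^2(\Omega;\mathbb{R}^N)$ has identical components (hence \eqref{LamEqComp}), $\B{\mu}\ge\B{0}$, and $\B{\vartheta}\in\mathbb{R}^N$ is constant; the identity $\sum_{i=1}^{N}\vartheta^i=0$ follows because the two $\overline{\B{\xi}}$-type contributions cancel and $P_{T\Sigma}\B{f}^{\bphi}$ has components summing to zero, which is \eqref{thetaSum0}.

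The main technical obstacle is the complementarity \eqref{muphiVan0}, i.e.\ $\mu^i\varphi^i=0$ a.e., since only weak $L^2$-convergence of $\tfrac{1}{\delta}\hat{\B{\phi}}(\pd)$ is available and the product cannot be passed to the limit pointwise. I resolve this by Egorov's theorem applied on the slices $A_k:=\{\varphi^i\ge 1/k\}$: on $A_k$ minus an arbitrarily small exceptional set the convergence $\pd\to\bphi$ is uniform, hence $\varphi_\delta^i>0$ eventually on this good subset, so that $\hat{\phi}^i(\pd)$ and therefore $\tfrac{1}{\delta}\hat{\phi}^i(\pd)$ vanish identically there. Testing against $L^2$-functions supported on this good subset and letting the exceptional set shrink forces $\xi^i=0$ a.e.\ on $A_k$; sending $k\to\infty$ yields $\xi^i=0$ a.e.\ on $\{\varphi^i>0\}$, which is exactly \eqref{muphiVan0} and completes the proof.
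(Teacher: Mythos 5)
Your proposal is correct and rests on the same regularization machinery (Lemmas~\ref{Exreg}--\ref{LEM:PHIHAT}, the bounds \eqref{boundH2}, \eqref{pdmin}, \eqref{boundhp}), and your multipliers are in fact the $\delta\to 0$ limits of the paper's $\B{\Lambda}_\delta,\B{\vartheta}_\delta,\B{\mu}_\delta$ from \eqref{Lagrd}, just assembled directly from the weak limit $\B{\xi}$ of $-\tfrac1\delta\hat{\B{\phi}}(\pd)$ rather than defined at the $\delta$-level; the algebra verifying \eqref{PhStr}, \eqref{LamEqComp} and \eqref{thetaSum0} checks out. Where you genuinely deviate is in the two key steps. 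For the identification of the limit with $\bphi$, you test \eqref{reg} with $\tilde{\B{\zeta}}-\pd$, use the convexity inequality \eqref{gpsip} to discard the penalization term, pass the resulting variational inequality to the limit, and invoke uniqueness of minimizers of the strictly convex reduced functional on the mean-constrained set; the paper instead tests the limit equation \eqref{wLim} with $\overline{\bphi}-\bphi$, exploits the sign information $(\B{\mu},\overline{\bphi})=0$, $(\B{\mu},\bphi)\ge 0$ obtained from $(\B{\mu}_\delta,\pd)\le 0$, and subtracts \eqref{GIvar} to get $\gamma\eps\|\nabla(\overline{\bphi}-\bphi)\|_{L^2}^2\le 0$. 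Your route has the advantage of not needing the multipliers at all for this step, at the cost of an (implicit, but also implicit in the paper) use of connectedness of $\Omega$ when concluding from equal gradients and equal means. For the complementarity \eqref{muphiVan0}, your Egorov-localization argument (uniform convergence on $\{\varphi^i\ge 1/k\}$ minus a small set forces $\hat{\phi}^i(\pd)\equiv 0$ there, hence $\xi^i=0$ on $\{\varphi^i>0\}$) is valid but more laborious than the paper's weak-times-strong pairing: since $\pd\to\bphi$ strongly in $L^2$ and $\B{\mu}_\delta\rightharpoonup\B{\mu}$, the inequality $(\B{\mu}_\delta,\pd)\le 0$ passes to the limit, and $(\B{\mu},\bphi)=0$ with componentwise non-negativity immediately gives $\mu^i\varphi^i=0$ a.e. Both of your substitutions are sound, so the proof stands; it simply trades the paper's sign/duality arguments for convexity and measure-theoretic localization.
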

\begin{proof}
    In this proof, we will again use the notation $\B{f} = \B{f}^{\B{\varphi}}$.
    From \eqref{boundH2} we deduce the existence of a function $\overline{\bphi}\in H^2(\Omega;\mathbb{R}^N)$ such that
    \begin{alignat}{2}\label{konvpd}
    \begin{aligned}
        \pd&\rightharpoonup \overline{\bphi} &&\quad\text{in }H^2(\Omega;\mathbb{R}^N),\\
        \pd&\to \overline{\bphi} &&\quad\text{in }H^1(\Omega;\mathbb{R}^N),\\
        \pd&\to \overline{\bphi}&&\quad\text{a.e.~in }\Omega,\\
        \hat{\B{\phi}}(\pd) & \to 0 &&\quad\text{in }L^2(\Omega;\mathbb{R}^N),
    \end{aligned}    
\end{alignat}
as $\delta\to 0$ along a non-relabeled subsequence.
This directly implies that $\overline{\bphi}(x)\in\mathbb{R}^N_{+}$ for almost all $x\in \Omega$. Hence, since $\pd\in\tilde{\B{\mathcal{G}}}^{\B{m}}$, we know that $\overline{\bphi}\in\B{\mathcal{G}}^{\B{m}}$.

Recalling the definition of $\B{f}$ in \eqref{DEF:FPHI}, we now define the Lagrange multipliers of the regularized problem as
\begin{alignat}{1}\label{Lagrd}
    \begin{aligned}
            \B{\Lambda}_{\delta}
        &\coloneqq
           \frac1N\sum_{i=1}^{N}\left(
          \frac{\gamma}{\delta}\hat{\phi}^i(\pd)\right)\B{1},\\
        \B{\vartheta}_{\delta}
        &\coloneqq
        \fint_{\Omega}P_{T\Sigma}
        \left(
        \frac{\gamma}{\delta}
        \hat{\B{\phi}}(\pd)-\eps\B{f}
        \right)    
        \mathrm{\,d}x,\\
            \B{\mu}_{\delta}
        &\coloneqq 
            -\frac{\gamma}{\delta}\hat{\B{\phi}}(\pd).
    \end{aligned}
\end{alignat}
The reason why we do not reformulate the projection term $P_{T\Sigma}\B{f}$ by means of a Lagrange multiplier is that this is a term depending on $x$, which will produce terms of order $\mathcal{O}(\frac{1}{\eps^2})$ when we consider the inner expansions in Section~\ref{SEC:Leading} due to the involved derivative of eigenfunctions.    
Recalling Definition~\ref{DEF:Proj}, we have
 \begin{align*}
       \B{\Lambda}_\delta+\B{\vartheta}_{\delta}+\B{\mu}_{\delta}
    =
        -\frac{\gamma}{\delta}P_{T\Sigma}[\hat{\B{\phi}}(\pd)]-\eps\fint_{\Omega}P_{T\Sigma}\B{f}\mathrm{\,d}x.
 \end{align*}

Hence, we can write \eqref{reg} as
\begin{align}\label{ThmRE}
        \gamma\varepsilon \left(\nabla\pd,\nabla \B{\eta}\right)
        -\frac{1}{\varepsilon}\left(
                \B{\Lambda}_{\delta}+
                \B{\vartheta}_{\delta}
                +\B{\mu}_{\delta},\B{\eta}
            \right)=
        \left(P_{T\Sigma}\B{f},\B{\eta}\right)
        \quad \text{for all $\B{\eta}\in \HN$}.
\end{align}
We point out that the Lagrange multipliers are constructed in such a way that the factor $\frac{1}{\eps}$ corresponding to the scaling of the original potential $\psi$ is still present. This will be important in the next sections for the sharp-interface asymptotics.

We know from Lemma~\ref{LEM:PHIHAT}
that $\B{\Lambda}_{\delta},\B{\mu}_{\delta}\in L^2(\Omega;\mathbb{R}^N)$ and $\B{\vartheta}_{\delta}\in \R^N$ are bounded uniformly in $\delta$. 
Hence, we find a subsequence and $\B{\Lambda},\B{\mu}\in L^2(\Omega;\mathbb{R}^N)$ and $\B{\vartheta}\in \R^N$ such that
\begin{alignat}{2}\label{wkoLa}
\begin{aligned}
    \B{\Lambda}_{\delta}&\rightharpoonup \B{\Lambda} &&\quad\text{in\;}L^2(\Omega;\mathbb{R}^N),\\
    \B{\vartheta}_{\delta}&\to\B{\vartheta}&&\quad\text{in\;}\mathbb{R}^N,\\
    \B{\mu}_{\delta}&\rightharpoonup\B{\mu}&&\quad\text{in\;}L^2(\Omega;\mathbb{R}^N)
\end{aligned}    
\end{alignat}
as $\delta\to 0$. We additionally know from the definition of $\hat{\B{\phi}}$ in \eqref{prepsid} that $\B{\mu}\ge 0$ componentwise as weak convergence in $L^2(\Omega;\mathbb{R}^N)$ preserves non-negativity.
Furthermore, from the construction in \eqref{Lagrd} we directly deduce \eqref{LamEqComp} and \eqref{thetaSum0}.

Passing to the limit in \eqref{ThmRE}, we infer 
\begin{align}\label{wLim}
        \gamma\varepsilon \left(\nabla\overline{\bphi},\nabla \B{\eta}\right)
        -\frac{1}{\varepsilon}
        \left(
                \B{\Lambda}+\B{\vartheta}
                +\B{\mu},\B{\eta}
            \right)=
        \left(P_{T\Sigma}\B{f},\B{\eta}\right), 
        \quad \text{for all $\B{\eta}\in \HN$}.
\end{align}
Thus, the regularity $\overline{\bphi}\in H^2(\Omega;\mathbb{R}^N)$ and 
integration by parts yield the equation
\begin{alignat}{3}
\begin{aligned}
     -\gamma\varepsilon \Delta \overline{\bphi}
 &=
     \frac{1}{\varepsilon}
     (\B{\Lambda}+\B{\vartheta}+\B{\mu})+P_{T\Sigma}\B{f}
 &&\quad\text{a.e.~in }\Omega.
\end{aligned}    
\end{alignat}
If we can now show that for our initially fixed solution $\bphi\in\B{\mathcal{G}}^{\B{m}}$ of \eqref{GIp} it holds $\bphi=\overline{\bphi}$, the proof is complete.

Let us consider the test function $\B{\eta}\coloneqq\overline{\bphi}-\bphi\in \HL$. 
Due to \eqref{LamEqComp}, we have $\big(\B{\Lambda}_{\delta},\B{\eta}\big)=0$, as $\sum_{i=1}^{N}\eta^i=0$ because of $\overline{\bphi},\bphi\in \B{\mathcal{G}}^{\B{m}}$. 
In view of  \eqref{thetaSum0} we know that $\big(\B{\vartheta},\B{\eta}\big)=0$, because by construction $\int_{\Omega} \B{\eta}\mathrm{\,d}x=0$.

As already mentioned, we have $\B{\mu}_{\delta}\ge 0$. Hence, using the monotonicity \eqref{phmon}, we infer
\begin{align*}
\left(\B{\mu}_{\delta},\pd\right)=-\frac{1}{\delta}\left(\hat{\B{\phi}}(\pd),\pd\right)\le 0.
\end{align*}
Using the convergences \eqref{konvpd} and \eqref{wkoLa}, we deduce $(\B{\mu},\overline{\bphi})\le 0$. Recalling $\B{\mu}\ge 0$ 
and that $\overline{\bphi}\in  \B{\mathcal{G}}^{\B{m}}$ is component wise non-negative, we already deduce $(\B{\mu},\overline{\bphi})=0$.

As also $\bphi\in \B{\mathcal{G}}^{\B{m}}$ and $\bphi$ is component wise non-negative, we have $(\B{\mu},\bphi)\ge 0$.
Combining these results and testing \eqref{wLim} with our particular choice $\B{\eta}=\overline{\bphi}-\bphi$, we get
\begin{align*}
      \gamma\varepsilon \left(\nabla\overline{\bphi},\nabla[\overline{\bphi}-\bphi]\right)
     =
     -\frac{1}{\varepsilon}(\B{\mu},\bphi)+(P_{T\Sigma}\B{f},\overline{\bphi}-\bphi)
     \le (P_{T\Sigma}\B{f},\overline{\bphi}-\bphi).
\end{align*} 
Considering the gradient inequality \eqref{GIvar} tested with $\tilde{\bphi}=\overline{\bphi}\in \B{\mathcal{G}}^{\B{m}}$, we have
\begin{align*}
        \gamma\varepsilon 
        \left(\nabla\bphi,\nabla[\overline{\bphi}-\bphi]\right)
     \ge
        (\B{f},\overline{\bphi}-\bphi)
        =(P_{T\Sigma}\B{f},\overline{\bphi}-\bphi).
\end{align*}
Hence, by subtracting both inequalities, we infer
\begin{align*}
    \gamma\varepsilon 
    \left(\nabla[\overline{\bphi}-\bphi],\nabla[\overline{\bphi}-\bphi]\right)\le 0.
\end{align*}
As $\int_{\Omega}\overline{\bphi}-\bphi\mathrm{\,d}x=0$, this gives us the desired identity $\bphi=\overline{\bphi}\in H^2(\Omega;\mathbb{R}^N)$.

From the previous reasoning we know
    \begin{align*}
    \sum_{i=1}^{N}\int_{\Omega}\mu^i\varphi^i\mathrm{\,d}x=(\B{\mu},\bphi)_{L^2}=0,
    \end{align*}
    Furthermore, we know that $\B{\mu},\bphi\ge 0$ componentwise and thus, each summand in above equality has to be identical to $0$. This verifies \eqref{muphiVan0}.
\end{proof}

In the following, we use the above knowledge to show that our asymptotic expansions will produce a state equation and a gradient equality in the sharp-interface limit.

%%%%%%%%%%%%%%%%%%%%%%%%%%%%%%%%%%%%
%%%%%%%%%%%ASYMPTOTIC EXPANSIONS%%%%
%%%%%%%%%%%%%%%%%%%%%%%%%%%%%%%%%%%%

\section{Asymptotic expansions}
As mentioned above, we will now perform the procedure of sharp-interface asymptotics. Therefore, we start by analyzing outer and inner expansions approximating the quantities involved in our problem. The outer expansions are used to approximate these quantities in regions far away from the interfacial layers. They will be used to derive the state equation in the sharp-interface limit. The inner expansions are used in regions close to the interfacial layers where the phase transition takes place. They will provide boundary conditions for the equations obtained in the sharp-interface limit. As these layers are expected to scale proportionally to $\eps$, a rescaling is needed here. By comparing the leading order equations, we will obtain jump conditions at the phase interfaces within the design domain and a sharp-interface version of the gradient equality \eqref{PhStr}.

In the following, we choose $\left(\bphi^{\varepsilon}\right)_{\varepsilon>0}\subset \B{\mathcal{G}}^{\B{m}}$ as a sequence of minimizers of the optimization problem \eqref{Pepsla}. For $r=1,\dots,l$, $\left(\bw_{n_r}^{\eps},\lambda_{n_r}^{\eps}\right)_{\eps>0}\subset \Hd\times \R$ denotes the corresponding sequence of $L^2_{\bphi}(\Omega;\R^d)$-normalized eigenfunctions and eigenvalues, which are non-trivial solutions of the state equation \eqref{state} involved in the optimization problem \eqref{Pepsla}.

\subsection{Outer expansions}\label{SEC:outer}
As in \cite{Blank}, we first consider the asymptotic expansions in regions ``far'' away from the interface. Therefore, we assume expansions of the form
\begin{alignat}{2}
\begin{aligned}\label{ExpPw}
    \B{\varphi}^{\varepsilon}(x)
    &=\sum_{k=0}^{\infty}\varepsilon^k\B{\varphi}_k(x),\\
    \lambda^\eps_{n_r}
    &=
    \sum_{k=0}^{\infty}\varepsilon^{k}\lambda_{k,n_r},\\
    \B{w}_{n_r}^{\varepsilon}(x)
    &= \sum_{k=0}^{\infty}\varepsilon^{k}\B{w}_{k,n_r}(x)
\end{aligned}
\end{alignat}
for all $x\in\Omega$.
Furthermore, we demand for all $x\in\Omega$ that
${\B{\varphi}_0(x)\in \B{G}}$, $\B{\varphi}_k(x)\in T\Sigma$, $\fint \B{\varphi}_0=\B{m}$ and $\fint \B{\varphi}_k=\B{0}$
for $k\ge 1$, in order to be compatible with the constraints on the phase-field formulated in Section~\ref{Sec:Phase}. As we are concerned with a formal limit process, we assume all the appearing quantities to possess a suitable regularity such that we can write the state equation \eqref{state} in its strong formulation. 

Using standard arguments relying on the $\Gamma$-convergence of the Ginzburg--Landau energy in \cite{Baldo}, we can partition the domain as
\begin{align}
    \label{DEC:OMEGA}
    \Omega=\bigcup_{i=1}^N\Omega_i\cup \mathcal{N} 
    \quad\text{with}\quad
    \Omega_i\coloneqq \left\{\B{\varphi}_0=\B{e}_i\right\},
\end{align}
where $\mathcal{N}\subset \Omega$ is a Lebesgue null set. In general, the sets $\Omega_i$ are only  finite perimeter sets. This follows from the boundedness of the Ginburg--Landau energy, the inequality \cite[(3.1)]{Baldo} and \cite[Proposition 2.2]{Baldo}. Nevertheless, for our asymptotic analysis we assume them to be smooth enough.

With this knowledge, we are in a position to derive the limit state equation resulting from \eqref{state} in the framework of outer asymptotic expansions.

\begin{Claim}\label{Thm:Lam-1}
   Recall the scaling of $\mathbb{C}$ and $\rho$ in \eqref{elT}, i.e.,
    \begin{align}
    \label{EXP:CRHO}
    \begin{aligned}
	\mathbb{C}(\B{\varphi})
	&=\overline{\mathbb{C}}(\B{\varphi})
	    +\tilde\C^N \eps^k \alpha_V(\varphi^N)
	=\sum_{i=1}^{N-1}\mathbb{C}^{i}\alpha_M(\varphi^i)
	    +\tilde\C^N \eps^k \alpha_V(\varphi^N),\\
	\rho(\B{\varphi})
	&=\overline{\rho}(\B{\varphi})+\tilde\rho^N \eps^l \beta_V(\varphi^N)
	=\sum_{i=1}^{N-1}\rho^{i}\beta_M(\varphi^i)+\tilde\rho^N \eps^l \beta_V(\varphi^N),
    \end{aligned}
    \end{align}
    for $\bphi\in \B{G}$. 
     Then, for $r\in \left\{1,\dots,l\right\}$, we obtain
            that the pair $(\lambda_{0,n_r},\bw_{0,n_r})$ fulfills the eigenvalue equations in the material regions
    \begin{align}\label{state0}\tag{${SE}^{i}_0$}
    \begin{cases}
        \begin{array}{rll}
                -\nabla\cdot\left[\mathbb{C}^{i}\mathcal{E}(\B{w}_{0,n_r})\right]
            &=
                \lambda_{0,n_r}
                \rho^i\B{w}_{0,n_r}
            &\quad
                \text{in }\Omega_i,\\
                \B{w}_{0,n_r}
            &=
                \B{0}
            &\quad
                \text{on }\Gamma_D\cap \partial\Omega_i,\\
                \left[
                    \mathbb{C}^{i}\mathcal{E}(\B{w}_{0,n_r})
                \right]\B{n}
            &=
                \B{0}
            &\quad
                \text{on }\Gamma_0\cap \partial\Omega_i,
        \end{array}
    \end{cases}
    \end{align}
    for $i=1,\dots, N-1$. Furthermore, the normalization condition \eqref{EveNorm} is transferred to the limit eigenfunction $\bw_{0,n_r}$ meaning that
    \begin{align}\label{EVNorm0}
        1
        =\int_{\Omega}\overline{\rho}(\bphi_0)\abs{\bw_{0,n_r}}^2\mathrm{\,d}x
        =\sum_{i=1}^{N-1}\int_{\Omega_i}\overline{\rho}(\bphi_0)\abs{\bw_{0,n_r}}^2\mathrm{\,d}x.
    \end{align}
    In particular, the eigenfunction $\bw_{0,n_r}$ is non-trivial in $\Omega_i$ for at least one index $i\in\{1,\dots,N-1\}$. Thus, $\bw_{0,n_r}$ cannot be a localized eigenmode as it is not supported only in the void region~$\Omega_N$.
\end{Claim}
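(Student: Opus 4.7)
The plan is to substitute the outer expansions \eqref{ExpPw} into the strong form of the state equation \eqref{state} and into the normalization identity \eqref{EveNorm}, and then collect the leading-order terms region by region, exploiting the partition \eqref{DEC:OMEGA}. First, I would Taylor-expand the coefficients using \eqref{EXP:CRHO}. Since $\alpha_M,\alpha_V,\beta_M,\beta_V \in C^{1,1}_{\text{loc}}$, we may write $\mathbb{C}(\B{\varphi}^\varepsilon) = \mathbb{C}(\B{\varphi}_0) + \mathcal{O}(\varepsilon)$ and $\rho(\B{\varphi}^\varepsilon) = \rho(\B{\varphi}_0) + \mathcal{O}(\varepsilon)$. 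By \eqref{DEC:OMEGA}, we have $\B{\varphi}_0 \equiv \B{e}_i$ on $\Omega_i$, so by the properties \eqref{eq:def_inter} it holds $\alpha_M(\varphi^i_0) = 1$ on $\Omega_i$ while $\alpha_M(\varphi^j_0) = 0$ there for $j \neq i$. Hence on each material region $\Omega_i$ with $i<N$ we obtain $\overline{\mathbb{C}}(\B{\varphi}_0) = \mathbb{C}^i$ and $\overline{\rho}(\B{\varphi}_0) = \rho^i$, and moreover the void contributions $\tilde{\mathbb{C}}^N \varepsilon^k \alpha_V(\varphi^N_0)$ and $\tilde{\rho}^N \varepsilon^l \beta_V(\varphi^N_0)$ vanish identically on $\Omega_i$ because $\alpha_V(0) = \beta_V(0) = 0$.

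Plugging these expansions into \eqref{state}, comparing the $\mathcal{O}(\varepsilon^0)$ terms on each $\Omega_i$ with $i \in \{1,\dots,N-1\}$ immediately produces the eigenvalue equation in \eqref{state0}; the two boundary conditions on $\Gamma_D \cap \partial\Omega_i$ and $\Gamma_0 \cap \partial\Omega_i$ follow in the same way by reading off the leading order of the boundary conditions in \eqref{state}. Note that we do not obtain a nontrivial limit equation on $\Omega_N$ itself; there, the coefficients are $\tilde{\mathbb{C}}^N \varepsilon^k$ and $\tilde{\rho}^N \varepsilon^l$, and all terms in the state equation degenerate to $0$ at leading order. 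This is precisely the mechanism by which the $\varepsilon$-scaling in \eqref{elT} eliminates the void from the limit spectral problem.

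For the normalization statement, I would expand $1 = \int_\Omega \rho(\B{\varphi}^\varepsilon)\abs{\B{w}^\varepsilon_{n_r}}^2 \,\mathrm{d}x$ by splitting along \eqref{DEC:OMEGA}. On $\Omega_N$ we have $\rho(\B{\varphi}_0) = \tilde{\rho}^N \varepsilon^l \beta_V(1) = \tilde{\rho}^N \varepsilon^l$, which is of order $\mathcal{O}(\varepsilon^l)$ and therefore does not contribute at leading order. On each $\Omega_i$ with $i < N$ the density equals $\rho^i + \mathcal{O}(\varepsilon)$, and the leading-order balance gives
\begin{equation*}
    1 = \sum_{i=1}^{N-1} \int_{\Omega_i} \rho^i \abs{\B{w}_{0,n_r}}^2 \,\mathrm{d}x = \int_\Omega \overline{\rho}(\B{\varphi}_0) \abs{\B{w}_{0,n_r}}^2 \,\mathrm{d}x,
\end{equation*}
which is exactly \eqref{EVNorm0}. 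Since the right-hand side equals $1$, at least one summand is strictly positive, so $\B{w}_{0,n_r}$ cannot be supported only in the void region $\Omega_N$ and is therefore not a localized eigenmode.

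The genuinely delicate point is the treatment of the void region: both tensors degenerate there as $\varepsilon \to 0$, and without the explicit $\varepsilon^k, \varepsilon^l$ scalings in \eqref{elT} (together with the vanishing at $0$ of $\alpha_V, \beta_V$), there would be no a priori reason for the void contributions to drop out cleanly at leading order. The rest of the argument is a mechanical comparison of powers of $\varepsilon$; regularity of all quantities appearing in the outer expansion is taken for granted, as is standard in the formal method of matched asymptotics.
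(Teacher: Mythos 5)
Your proposal is correct and follows essentially the same route as the paper: plug the outer expansions into the strong form of \eqref{state} and into the normalization \eqref{EVNorm}, compare the $\mathcal{O}(1)$ contributions, and use that $\B{\varphi}_0=\B{e}_i$ on $\Omega_i$ together with \eqref{eq:def_inter} and the $\eps^k,\eps^l$ scaling of the void terms so that $\overline{\C}(\B{\varphi}_0)=\C^i$, $\overline{\rho}(\B{\varphi}_0)=\rho^i$ on the material regions and the void drops out of the normalization, yielding \eqref{state0}, \eqref{EVNorm0} and the non-triviality of $\B{w}_{0,n_r}$ in some material region. Your added remarks on the Taylor expansion of the interpolation functions and on the degeneration of the equation in $\Omega_N$ are consistent with, and only slightly more explicit than, the paper's argument.
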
    

 \begin{Rem}\label{REM:Korn}
 \begin{enumerate}[label=(\alph*), leftmargin =*]
     \item\label{KornA} Of course, the eigenvalue $\lambda_{n_r}^\eps$ could degenerate in the limit, i.e., $\lambda_{0,n_r}=0$. This is no contradiction to the normalization \eqref{EVNorm0} because $\bw_{0,n_r}$ could potentially be a non-trivial constant in each material region $\Omega_i$. If each material region $\Omega_i$ shares a sufficiently nice part of the boundary with $\Gamma_D$, one can use Korn's inequality (see, e.g., \cite[Theorem 6.15-4]{CiarletFA} or \cite[Theorem 62.13]{Zeidler}) to deduce that $\bw_{0,n_r}=0$ in each $\Omega_i$, which would then indeed contradict \eqref{EVNorm}. The inner expansions will provide us with boundary conditions that allow us to refine this statement, see Section~\ref{SEC:limitpr} (especially Remark~\ref{REM:Korn2}).
     
     \item\label{KornB} In the case $\lambda_{0,n_r}>0$, even though the limit eigenvalue equations \eqref{state0} hold for all $i\in\{1,\dots,N-1\}$, the eigenfunction $\bw_{0,n_r}$ could potentially be non-trivial only in one particular material region $\Omega_i$ but vanish in all other material regions $\Omega_j$ with $j\in\{1,\dots,N-1\}\setminus\{i\}$. This means that a non-trivial equation might hold only in one single material region.
 \end{enumerate}

 \end{Rem}
Let us show the Claim~\ref{Thm:Lam-1} assuming that outer expansions of the form \eqref{ExpPw} exist.
            For the sake of a cleaner presentation, we will now fix the index $n_r\in \N$ and in the following, we omit the subscript $n_r$.
            In the spirit of formal asymptotics, we consider the state equation \eqref{state}, i.e.,
            \begin{align*}
                -\nabla\cdot\left[\mathbb{C}(\B{\varphi}^\eps)\mathcal{E}(\B{w}^\eps)\right]
                =
                \lambda^\eps
                \rho(\B{\varphi}^\eps)\B{w}^\eps
                \quad
                \text{a.e.~in }\Omega,  
            \end{align*}
            and the normalization condition 
            \begin{align}\label{EVNorm}
                1=\int_{\Omega} \rho(\bphi^\eps)\abs{\bw^\eps}^2\mathrm{\,d}x
            \end{align}
            resulting from \eqref{EveNorm}.
            Then, we plug in the asymptotic expansions \eqref{ExpPw} and consider each resulting order in $\eps$ separately.
            
We deduce that \eqref{EVNorm} reads to order $\mathcal{O}(1)$ as
\begin{align*}
    1&=
    \int_{\Omega}\overline{\rho}(\bphi_0)\abs{\bw_0}^2\mathrm{\,d}x=
    \sum_{i=1}^{N-1}\int_{\Omega_i}\rho^i\abs{\bw_0}^2\mathrm{\,d}x,
\end{align*}
which proves \eqref{EVNorm0}.
As a consequence, $\bw_0$ has to be non-trivial in in $\Omega_i$ for at least one index $i\in\{1,\dots,N-1\}$. 

Eventually, we compare the contributions of order $\mathcal{O}(1)$ in the state equation. We obtain
\begin{align}\label{O0Out}
    -\nabla\cdot\left[\overline{\C}(\bphi_0)\E(\bw_0)\right]=\lambda_0\overline{\rho}(\bphi_0)\bw_0\quad\text{a.e.~in }\Omega,
\end{align}
which reads for each phase
\begin{align*}
    -\nabla\cdot\left[\C^i\E(\bw_0)\right]=\lambda_0\rho^i\bw_0\quad\text{a.e.~in }\Omega_i
\end{align*}
for $i=1,\dots,N-1$.
The remaining boundary conditions on the \textit{outer} boundary $\Gamma$ follow directly by plugging in the asymptotic expansion into \eqref{state}. 
This completes the argumentation.  

\subsection{Intermezzo on spurious eigenmodes}\label{intermezzo}    
       
As already mentioned in the introduction, we now want to analytically justify the model that will be chosen for the numerical computations in order to avoid spurious eigenmodes. As we have seen in the above reasoning, assuming outer expansions of the form \eqref{ExpPw} and the general decomposition of $\C$ and $\rho$ as in \eqref{EXP:CRHO}, we recover the desired limit system. 
In this subsection, in order to discuss spurious eigenmodes, we consider \eqref{EXP:CRHO} with the
\emph{specific} choices $k=1$, $l=2$, 
% but general interpolation functions $\alpha_M,\alpha_V,\beta_M,\beta_V$ as introduced in \eqref{elT}.
and in addition to \eqref{elT} and \eqref{eq:def_inter}, we choose the interpolation functions $\alpha_M$ and $\beta_M$ such that \begin{align}\label{eq:global_nonneg}
    \alpha_M(s)>0, \quad \beta_M(s)>0 \quad \text{for all $s\in\R\setminus\{0\}$}.
\end{align}
In numerical simulations the phenomenon of spurious eigenmodes is a serious issue, see \cite{BucurMartinetOudet,Pedersen,Bendsoe,Allaire}. The problem is that if the model parameters are not chosen correctly, eigenmodes that are supported only in the void region can actually emerge. Of course, the associated eigenvalues are  \textit{unphysical} as the void should not contribute to the resonance behaviour of the structure. Nevertheless, even though spurious eigenmodes might not be avoided in numerical simulations,  
they do not pose any problem if their associated eigenvalues are large since then, they do not affect the part of the spectrum that is involved in the optimization problem \eqref{Pepsla}. 
For this reason, as also observed in the aforementioned literature, the key idea is to choose the scaling and interpolation in \eqref{EXP:CRHO} in such a way that spurious eigenmodes will only produce large eigenvalues or more precisely, eigenvalues $\lambda^\eps$ with $\lambda^\eps\to\infty$ as $\eps\to 0$. In particular, this means that by using an adequate interplay of scaling and interpolation, spurious eigenmodes will not enter the sharp interface limit as their eigenvalues leave the considered part of the spectrum.

In order to allow for spurious eigenmodes in our asymptotic expansions, we have to include terms of \textit{negative} order in $\eps$.
  
\begin{Claim}\label{Thm:NumScale}
  Assume the following outer asymptotic expansions
  \begin{alignat}{2}
  	\begin{aligned}\label{ExpNum}
  		\B{\varphi}^{\varepsilon}(x)
  		&=\sum_{k=0}^{\infty}\varepsilon^k\B{\varphi}_k(x),\\
  		\lambda^\eps_{n_r}
  		&=\sum_{k=-m}^{\infty}\varepsilon^{k}\lambda_{k,n_r},\\
  		\B{w}_{n_r}^{\varepsilon}(x)
  		&=\sum_{k=-m}^{\infty}\varepsilon^{k}\B{w}_{k,n_r}(x),
  	\end{aligned}
  \end{alignat}
  for an arbitrary $m\in\N$.
 Let $\mathbb{C}$ and $\rho$ be given as in \eqref{EXP:CRHO} with $k=1$, $l=2$, i.e.,
  \begin{align}
  	\label{QuadExp}
  	\begin{split}
  		\mathbb{C}(\B{\varphi})
  		=\overline{\mathbb{C}}(\B{\varphi})
  		+\tilde{\mathbb{C}}^N\eps\alpha_V(\varphi^N)
  		&=\sum_{i=1}^{N-1}\mathbb{C}^{i}\alpha_M({\varphi}^i)
  		+\tilde{\mathbb{C}}^N\eps\alpha_V(\varphi^N),\\
  		\rho(\B{\varphi})
  		=\overline{\rho}(\B{\varphi})+\tilde{\rho}^N\eps^2\beta_V(\varphi^N)
  		&=\sum_{i=1}^{N-1}\rho^{i}\beta_M({\varphi}^i)+\tilde{\rho}^N\eps^2\beta_V(\varphi^N),
  	\end{split}
  \end{align}
  for $\bphi\in \B{G}$. 
  Then, for $r\in \left\{1,\dots,l\right\}$, we obtain $\bw_{k,n_r}=\B{0}$ and $\lambda_{k,n_r}=0$ for $k<-1$ and
  the pair $(\lambda_{-1,n_r},\bw_{-1,n_r})$ fulfills 
  \begin{align}\label{DirVoid}\tag{${SE}^{N}_0$}
  	\begin{cases}
  		\begin{array}{rll}
  			-\nabla\cdot\left[\tilde{\C}^N\mathcal{E}(\B{w}_{-1,n_r})\right]
  			&=
  			\lambda_{-1,n_r}
  			\left[\tilde{\rho}^N+\overline{\rho}(\bphi_1)\right]\B{w}_{-1,n_r}
  			&\quad
  			\text{in }\Omega_N,\\
  			\B{w}_{-1,n_r}
  			&=
  			\B{0}
  			&\quad
  			\text{on }\Gamma_D\cap\partial\Omega_N,\\
            \left[
			\tilde{\mathbb{C}}^{N}\mathcal{E}(\B{w}_{-1,n_r})
			\right]\B{n}
  			&=
  			\B{0}
  			&\quad
  			\text{on }\Gamma_0\cap\partial\Omega_N.
  		\end{array}
  	\end{cases}
  \end{align}
\end{Claim}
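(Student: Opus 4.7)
The plan is to substitute the extended asymptotic expansions \eqref{ExpNum} directly into the state equation \eqref{state} in the void region $\Omega_N$ and match orders of $\eps$. In $\Omega_N$ we have $\bphi_0 = \B{e}_N$, so using the scaling $k=1$, $l=2$ in \eqref{QuadExp} together with an interpolation choice satisfying $\alpha_M'(0) = \beta_M'(0) = 0$ (for instance the quadratic choice \eqref{eq:num_choice} that will be used in the numerical section), we obtain in $\Omega_N$
\begin{align*}
    \C(\bphi^\eps) = \eps\,\tilde{\C}^N + \mathcal{O}(\eps^2),
    \qquad
    \rho(\bphi^\eps) = \eps^2\bigl[\tilde{\rho}^N + \overline{\rho}(\bphi_1)\bigr] + \mathcal{O}(\eps^3).
\end{align*}
These precise prefactors $\eps$ and $\eps^2$ are exactly what permits an eigenfunction scaling like $\eps^{-1}$ inside the void without generating singular contributions to the state equation.

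The second step is to exploit the normalization \eqref{EVNorm} to rule out any order below $\eps^{-1}$ in $\bw^\eps$. Expanding $\int_\Omega \rho(\bphi^\eps)|\bw^\eps|^2 \, \mathrm{d}x = 1$ as a power series in $\eps$, each coefficient of $\eps^k$ with $k<0$ must vanish. In a material region $\Omega_i$ with $i<N$, the lowest-order contribution reads $\eps^{-2m}\int_{\Omega_i}\rho^i|\bw_{-m}|^2$, while in $\Omega_N$ it reads $\eps^{2-2m}\int_{\Omega_N}[\tilde{\rho}^N+\overline{\rho}(\bphi_1)]|\bw_{-m}|^2$. For $m \ge 2$ both orders are strictly negative and must vanish, forcing $\bw_{-m} = \B{0}$ on each subregion of $\Omega$. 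Iterating this argument on the order yields $\bw_k = \B{0}$ for all $k < -1$.

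The third step is to plug the reduced expansion $\bw^\eps = \eps^{-1}\bw_{-1} + \bw_0 + \eps\,\bw_1 + \dots$ into the bulk equation in $\Omega_N$. The left-hand side $-\nabla\cdot[\C(\bphi^\eps)\E(\bw^\eps)]$ then starts at order $\mathcal{O}(1)$, while $\rho(\bphi^\eps)\bw^\eps$ starts at order $\eps^1$; hence any term $\eps^k\lambda_k$ with $k < -1$ would contribute at an order $\eps^{k+1}<\eps^0$ on the right with no counterpart on the left, and we conclude $\lambda_k = 0$ for all $k<-1$. Collecting the $\mathcal{O}(1)$ contributions then produces exactly the bulk equation of \eqref{DirVoid}, namely $-\nabla\cdot[\tilde{\C}^N\E(\bw_{-1})] = \lambda_{-1}[\tilde{\rho}^N + \overline{\rho}(\bphi_1)]\bw_{-1}$ in $\Omega_N$.

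Finally, the boundary conditions are read off order by order: the Dirichlet condition $\bw^\eps = \B{0}$ on $\Gamma_D$ implies $\bw_k = \B{0}$ at every order, and in particular $\bw_{-1} = \B{0}$ on $\Gamma_D \cap \partial\Omega_N$; expanding $[\C(\bphi^\eps)\E(\bw^\eps)]\B{n} = \B{0}$ on $\Gamma_0$ using the expansions in $\Omega_N$ gives $[\tilde{\C}^N\E(\bw_{-1})]\B{n} = \B{0}$ on $\Gamma_0 \cap \partial\Omega_N$ at the leading order $\mathcal{O}(1)$. The main obstacle in this plan is the careful bookkeeping of orders when $\bphi^\eps$, $\bw^\eps$ and $\lambda^\eps$ are simultaneously developed as (potentially bi-infinite) power series and have to be combined through products, together with verifying that the expansion of $\rho(\bphi^\eps)$ indeed reproduces the summand $\overline{\rho}(\bphi_1)$ at order $\eps^2$; this is the precise reason why the asymptotic analysis here is tied to an interpolation satisfying $\beta_M'(0) = 0$ and explains the role of the quadratic choice \eqref{eq:num_choice}.
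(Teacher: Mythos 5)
Your overall route mirrors the paper's own argument: expand $\C(\bphi^\eps)$ and $\rho(\bphi^\eps)$ under the scaling $k=1$, $l=2$, use the normalization \eqref{EVNorm} order by order to force $\bw_{k,n_r}=\B{0}$ for $k<-1$ and to localize $\bw_{-1,n_r}$ in $\Omega_N$, and then read off the $\mathcal{O}(1)$ balance of \eqref{state} in $\Omega_N$ together with the boundary conditions. One bookkeeping point you gloss over: the coefficient of $\eps^{2-2m}$ in \eqref{EVNorm} is not only the void term you display, it also contains $\int_{\Omega}\overline{\rho}(\bphi_0)\big(|\bw_{-m+1}|^2+2\,\bw_{-m}\cdot\bw_{-m+2}\big)\mathrm{\,d}x$; the conclusion $\bw_{-m}=\B{0}$ in $\Omega_N$ follows only because the cross term vanishes (as $\bw_{-m}$ already vanishes on the support of $\overline{\rho}(\bphi_0)$) and because every remaining summand is nonnegative, which uses $\overline{\rho}(\bphi_1)\ge 0$, i.e.\ exactly the global nonnegativity \eqref{eq:global_nonneg} of $\beta_M$ (automatic for the quadratic choice you invoke), not merely $\beta_M'(0)=0$. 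This is the mechanism the paper makes explicit and your sketch relies on implicitly.

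The genuine gap is in your derivation of $\lambda_{k,n_r}=0$ for $k<-1$. Matching the unbalanced negative orders of the bulk equation in $\Omega_N$ only yields $\lambda_{k}\,\big[\tilde{\rho}^N+\overline{\rho}(\bphi_1)\big]\bw_{-1}=\B{0}$, which forces $\lambda_k=0$ only if $\bw_{-1}\not\equiv\B{0}$ in $\Omega_N$; nothing in your argument excludes $\bw_{-1}\equiv\B{0}$ (a genuine, non-spurious mode), in which case the pointwise PDE matching gives no information on $\lambda_k$, so the step as written is a non sequitur. You can repair it either by a case distinction based on the $\mathcal{O}(1)$ coefficient of \eqref{EVNorm}, which shows that at least one of $\bw_{-1}$ on $\Omega_N$ and $\bw_0$ on $\Omega\setminus\Omega_N$ is nontrivial (and the analogous matching at order $\eps^{-m}$ in the material regions then kills $\lambda_{-m}$), or — as the paper does — by using the energy identity \eqref{EVEnergy}: once $\bw_k=\B{0}$ for $k\le -2$ and $\bw_{-1}$ is localized in the void, its right-hand side has leading order $\eps^{-1}$, which gives $\lambda_k=0$ for $k\le-2$ unconditionally and moreover identifies $\lambda_{-1}=\int_{\Omega_N}\tilde{\C}^N\E(\bw_{-1}):\E(\bw_{-1})\mathrm{\,d}x$. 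The remaining steps (the $\mathcal{O}(1)$ bulk balance in $\Omega_N$ and the boundary conditions on $\Gamma_D\cap\partial\Omega_N$ and $\Gamma_0\cap\partial\Omega_N$) are fine and coincide with the paper's reasoning.
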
    
\begin{Rem}
The asymptotic analysis in the argumentation of this subsection is crucially based on the interplay of the \emph{non-negative} interpolation functions $\alpha_M$ and $\beta_M$, see \eqref{eq:global_nonneg}, and the different $\eps$-scaling of the void components in \eqref{EXP:CRHO}.
Note that these two features are also important for our numerical experiments in Section~\ref{sec:num}, 
where the quadratic interpolation of $\C$ and $\rho$ as well as the relatively lower scaling in $\eps$ of the void contribution of $\rho$ compared to the void contribution of $\C$ are crucial to obtain meaningful results. It has also already been observed in the literature that a relatively lower scaling of mass compared to stiffness is an appropriate choice to deal with localized eigenmodes, see \cite{Allaire,Pedersen,BucurMartinetOudet}.
\end{Rem}	

  We now argue why Claim~\ref{Thm:NumScale} is true. Therefore, we consider the state equation \eqref{state} and the normalization \eqref{EVNorm}. First of all, we note that plugging in the asymptotic expansion of $\bphi^\eps$ into \eqref{QuadExp} yields
  \begin{alignat}{2}
  	\begin{aligned}\label{Taylor}
  		\C(\bphi^\eps)&=\overline{\C}(\bphi_0)+\eps\tilde{\C}^N\alpha_V(\varphi_0^N)+\eps^2\overline{\C}(\bphi_1)+\mathcal{O}(\eps^3)\\
  		\rho(\bphi^\eps)&=\overline{\rho}(\bphi_0)+\eps^2(\overline{\rho}(\bphi_1)+\tilde{\rho}^N\beta_V(\varphi_0^N))+\mathcal{O}(\eps^3).
  	\end{aligned}
  \end{alignat}
  As a first step let us show that $\bw_{k}=\B{0}$ in $\Omega$ for $k=-m,-m+1,\dots,-2$.
  Therefore let us start with the contribution of lowest order $\mathcal{O}(\eps^{-2m})$ in \eqref{EVNorm}, which reads as
  \begin{align}\label{Localized}
  	0=\int_{\Omega}\overline{\rho}(\bphi_0)\abs{\bw_{-m}}^2\text{\,d}x.
  \end{align}
This implies that $\bw_{-m}=0$ in $\Omega\backslash\Omega_N$, or in other words, $\bw_{-m}$ is localized in the void region. Now, we consider \eqref{EVNorm} to the order $\mathcal{O}(\eps^{-2m+2})$. We have 
  \begin{align}\label{O-m+2}
  	0=\int_{\Omega}\overline{\rho}(\bphi_0)\abs{\bw_{-m+1}}^2+2\overline{\rho}(\bphi_0)\bw_{-m}\cdot \bw_{-m+2}+(\overline{\rho}(\bphi_1)+\tilde{\rho}^N\beta_V(\varphi_0^N))\abs{\bw_{-m}}^2\text{\,d}x.
  \end{align}
  Here, we used that $-2m+2<0$.
  As $\bw_{-m}$ is localized in the void we infer
  \begin{align*}
  	0=\int_{\Omega}2\overline{\rho}(\bphi_0)\bw_{-m}\cdot \bw_{-m+2}\text{\,d}x.
  \end{align*}
  Thus, due to the non-negativity of the first summand in \eqref{O-m+2} we deduce
  \begin{align}
  	0=\int_{\Omega}(\overline{\rho}(\bphi_1)+\tilde{\rho}^N\beta_V(\varphi_0^N))\abs{\bw_{-m}}^2\text{\,d}x.
  \end{align}			
  Using the crucial global non-negativity of $\beta_M$ as assumed in \eqref{eq:global_nonneg}, 
  we have $\overline{\rho}(\bphi_1)\ge 0$, see \eqref{QuadExp}. Moreover, $\varphi_0=\B{e}_N$ in $\Omega_N$ and we thus deduce from \eqref{eq:def_inter}
  \begin{align}
  	0=\int_{\Omega_N}\tilde{\rho}^N\abs{\bw_{-m}}^2\text{\,d}x.
  \end{align}
Hence, since $\tilde{\rho}^N$ is positive, we infer $\bw_{-m}=\B{0}$ in  $\Omega$. These steps can now be repeated until the critical order $\mathcal{O}(1)$ is reached because up to this order, the normalization equation \eqref{EVNorm} possesses a trivial left hand side. This shows $\bw_{k}=0$ for $k=-m,-m+1,\dots,-2$.
As in \eqref{Localized}, we additionally conclude that $\bw_{-1}=\B{0}$ in $\Omega\backslash\Omega_N$.
  
  With this knowledge, we are in a position to show $\lambda_{k}=0$ for $k=-m,-m+1,\dots,-2$. Therefore let us consider the energy associated with \eqref{state}, i.e.,
  \begin{align}\label{EVEnergy}
  	\lambda^\eps=\int_{\Omega}\C(\bphi^\eps)\E(\bw^\eps):\E(\bw^\eps)\text{\,d}x.
  \end{align}
Due to the fact that $\bw_{k}=\B{0}$ in $\Omega$ for $k=-m,-m+1,\dots,-2$ and $\bw_{-1}=\B{0}$ in $\Omega\backslash\Omega_N$, we deduce that the right hand side is of leading order $\mathcal{O}(\eps^{-1})$. This directly implies $\lambda_{k}=0$ for $k=-m,-m+1,\dots,-2$ as well as
  \begin{align*}
  	\lambda_{-1}=\int_{\Omega_N}\tilde{\C}^N\E(\bw_{-1}):\E(\bw_{-1})\text{\,d}x.
  \end{align*}
  It remains to show that $(\lambda_{-1},\bw_{-1})$ solves the desired limit eigenvalue problem.
  Therefore we consider the state equation \eqref{state} to order $\mathcal{O}(1)$
  \begin{align*}
	-\nabla\cdot\left[
		\tilde{\C}^N\alpha_V(\varphi_0^N)\E(\bw_{-1})+\overline{\C}(\bphi_0)\E(\bw_0)\right]=
		&\phantom{+}\lambda_{1}\overline{\rho}(\bphi_0)\bw_{-1}+
		\lambda_0\overline{\rho}(\bphi_0)\bw_0+\\
		&
		\lambda_{-1}\overline{\rho}(\bphi_0)\bw_1+\lambda_{-1}(\tilde{\rho}^N\beta_V(\varphi_0^N)+\overline{\rho}(\bphi_1))\bw_{-1}.
  \end{align*}
  In $\Omega_N$ this simplifies to
  \begin{align*}
  	-\nabla\cdot\left[
  	\tilde{\C}^N\E(\bw_{-1})
  	\right]=
  	\lambda_{-1}(\tilde{\rho}^N+\overline{\rho}(\bphi_1))\bw_{-1}\quad\text{in }\Omega_N.
  \end{align*}
  
Summing up this intermezzo, we have now seen that even if spurious eigenmodes are not excluded, the appropriate choice of the model parameters will force the associated eigenvalues to leave the spectrum in the limit $\eps\to 0$. Hence, the spurious modes do not affect our optimization problem as they leave the considered part of the spectrum.

\subsection{Inner expansions}\label{SEC:inner}
In the interfacial regions, i.e., in layers separating two outer regions, we need to rescale our coordinate system in order to take into account that $\B{\varphi}^{\varepsilon}$ changes rapidly in directions perpendicular to the interface.

Therefore, for all $i,j=1,\dots,N$, we write $\Gamma=\Gamma_{ij}$ to denote the sharp-interface separating $\Omega_i$ and $\Omega_j$. Moreover, let $\B{n}_{\Gamma_{ij}}$ denote the unit normal vector field on $\Gamma$ pointing from $\Omega_i$ to $\Omega_j$.
In the following, we omit these indices to provide a cleaner presentation.

We now introduce a suitable coordinate system that fits the geometry of the interface. 
The following discussion can be found, e.g., in \cite{Abels} and thus we only give the key steps needed for our analysis.
Let us choose a local parametrization 
\begin{align}
    \label{DEF:GAMMA}
	\B{\gamma}: U\to \mathbb{R}^d,
	\quad
	\B{\gamma}(U) \subseteq \Gamma
\end{align}
of $\Gamma$, where $U$ is an open subset of $\mathbb{R}^d$. We further define $\B{\nu} := \B{n}_\Gamma \circ \B{\gamma}$.

As we want to describe a whole neighborhood surrounding the local part of the interface ${\B{\gamma}(U)\subset\Gamma}$, we introduce the signed distance function relative to $\Omega_i$ which satisfies $d(x)>0$ if $x\in \Omega_j$ and $d(x)<0$ if $x\in \Omega_i$. For more details concerning the signed distance function we refer the reader to \cite[Sec. 14.6]{Gilbarg}. 
By introducing the rescaled distance coordinate $z(x)\coloneqq \frac{1}{\eps}d(x)\in \R$ we define for fixed $z\in \R$ and sufficiently small $\eps>0$ the $(d-1)$-dimensional submanifold
\begin{align*}
  \Gamma_{\varepsilon z}\coloneqq \big\{\B{\gamma}(\bs)+\varepsilon z \B{\nu}(\bs)\,\big|\, \bs\in U \big\},
\end{align*}
which describes a translation of $\Gamma$ in the direction $\B{\nu}$. Here, for $x$ belonging to a sufficiently thin tubular neighbourhood around $\gamma(U)$, $\bs(x)$ is the unique point in $U$ such that $\gamma(\bs)\in \Gamma$ is the orthogonal projection of $x$ onto $\Gamma$. The
summand
        \begin{align*}
            \varepsilon z(x)\B{\nu}\big(s(x)\big) = d(x) \B{n}_\Gamma\big(\B{\gamma}\big(\bs(x)\big)\big)
        \end{align*}
shifts the point $\B{\gamma}\big(\bs(x)\big)$ back onto $x$.
Hence, a sufficiently thin tubular neighborhood around $\B{\gamma}(U)$ can be expressed by the coordinate system $(\bs,z)$.\\
Now we can express the transformation of differential operators with respect to the coordinate transformation $x\mapsto (\B{s}(x),z(x))$. Therefore, let us consider an arbitrary scalar function
\begin{align*}
    b(x)=\hat{b}(\bs(x),z(x)).
\end{align*}
It holds
\begin{align}\label{gradx}
  	\nabla_{x}b
  =
  	\nabla_{\Gamma_{\varepsilon z}}\hat{b}
    +\frac{1}{\varepsilon}\left(\partial_{z}\hat{b}\right)\B{\nu}
  =\frac{1}{\varepsilon}\left(\partial_{z}\hat{b}\right)\B{\nu}+\nabla_{\Gamma}\hat{b}+\mathcal{O}(\eps),	
\end{align}
where $\nabla_{\Gamma_{\eps z}}$ stands for the surface gradient on $\Gamma_{\eps z}$.
Proceeding analogously, we deduce that the divergence of a vector-valued function $\B{j}(x)=\hat{\B{j}}(\bs(x),z(x))$ can be expressed as 
\begin{align}\label{divx}
  	\nabla_{x}\cdot \B{j}
  =
  	\nabla_{\Gamma_{\varepsilon z}}\cdot \hat{\B{j}}
  +
  	\frac{1}{\varepsilon}\partial_{z}\hat{\B{j}}\cdot \B{\nu}
  =
    \frac{1}{\varepsilon}\partial_{z}\hat{\B{j}}\cdot \B{\nu}+
    \nabla_{\Gamma}\cdot \hat{\B{j}}+\mathcal{O}(\eps),
\end{align}
where $\nabla_{\Gamma_{\eps z}}\cdot \hat{\B{j}}$ stands for the surface divergence on $\Gamma_{\eps z}$.
Furthermore the full gradient of a vector-valued function is given by
\begin{align}\label{Jb}
  	\nabla_{x}\B{j}=\frac{1}{\varepsilon}\partial_{z}\hat{\B{j}}\otimes\B{\nu}+\nabla_{\Gamma}\hat{\B{j}}+\mathcal{O}(\eps),
\end{align}
where $\otimes$ denotes the dyadic product that is defined as
$\B{a}\otimes \B{b}=(a_ib_j)_{i,j=1}^{d}$ 
for all $\B{a},\B{b}\in \R^d$.
Analogously, for a matrix-valued function 
$$\mathcal{A}(x)=\left(a_{ij}(x)\right)_{i,j=1}^{d}=\hat{\mathcal{A}}(\bs(x),z(x)),$$ 
we apply formula \eqref{divx} to each component of the row-wise defined divergence $\nabla_{x}\cdot \mathcal{A}$. We obtain
\begin{align}\label{divA}
  \nabla_{x}\cdot\mathcal{A}=\nabla_{\Gamma}\cdot \hat{\mathcal{A}}+\frac{1}{\varepsilon}\partial_{z}\hat{\mathcal{A}}\B{\nu}+\mathcal{O}(\eps).
\end{align}

For the Laplacian we obtain the representation
\begin{alignat}{2}
\begin{aligned}\label{Laplace}
  	\Delta_{x}b
  &=  	
  	\Delta_{\Gamma_{\varepsilon z}}\hat{b}
  	+\frac{1}{\varepsilon}\left(\Delta_xd\right)\partial_{z}\hat{b}
  	+\frac{1}{\varepsilon^{2}}\partial_{zz}\hat{b}
  &=\frac{1}{\varepsilon^2}\partial_{zz}\hat{b}
  -\frac{1}{\varepsilon}\left(\hat\kappa+\varepsilon z \big|\hat{\mathcal{W}}\big|^2\right)\partial_{z}\hat{b}+
  \Delta_{\Gamma}\hat{b}+
  \mathcal{O}(\eps).	
\end{aligned}
\end{alignat}
Here $\mathcal{W}$ denotes the \textit{Weingarten map} 
associated with $\Gamma$ that is given by
\begin{align}\label{Wein}
    \mathcal{W}(x)\coloneqq -\nabla _{\Gamma}\B{n}_{\Gamma}(x) \in \R^{d\times d}
    \quad\text{for all $x\in\Gamma$},
\end{align}
see, e.g., \cite[Appendix~B]{Eck}. Its non-trivial eigenvalues $\kappa_1,\dots,\kappa_{d-1}$ are the principal curvatures of $\Gamma$ and its spectral norm can be expressed as 
$$\abs{\mathcal{W}}=\sqrt{\kappa_1^2+\dots \kappa_{d-1}^2}.$$ 
Furthermore $\kappa$ denotes the mean curvature which is defined as the sum of the principal curvatures of $\Gamma$. Note that in view of \eqref{Wein}, $\kappa$ can be expressed as
\begin{align}\label{RepCurv}
    \kappa(x)=-\nabla_{\Gamma}\cdot \B{n}_{\Gamma}(x) \quad\text{for all $x\in\Gamma$},
\end{align}
which will be important for later purposes.

To conclude this section, we introduce the inner expansions that we will work with in the next section. Therefore, we make the ansatz
\begin{alignat}{1}\label{inner}
\begin{aligned}
  \B{w}_{n_r}^{\eps}(x) &=\sum_{k=0}^{\infty}\varepsilon^{k}\, \bW_{k,n_r}\big(\bs(x),z(x)\big),\\
  \B{\varphi}^{\eps}(x) &=\sum_{k=0}^{\infty}\varepsilon^{k}\, \B{\Phi}_k\big(\bs(x),z(x)\big),
\end{aligned}
\end{alignat}
where we assume $\B{\Phi}_0(\bs(x),z(x))\in \B{G}$ and
$\B{\Phi}_k(\bs(x),z(x))\in T\Sigma^N$  for all $k\ge 1$. 
In the next section, we will relate these inner expansions to the outer expansions that were introduced before.

\begin{Rem} \label{REM:EXP:LAMBDA}
Note that the eigenvalues $\lambda_{n_r}^\eps$ do not depend locally on $x\in \Omega$ and thus, their inner expansion simply equals their outer expansion.
\end{Rem}

%%%%%%%%%%%%%%%%%%%%%%%%%%%%%%%%%%%%
%%%%%%%%%%%% MATCHING %%%%%%%%%%%%%%
%%%%%%%%%%%%%%%%%%%%%%%%%%%%%%%%%%%%

\section{The matching conditions}\label{SEC:Match}
So far, we have constructed outer expansions which are supposed to hold inside the material regions $\Omega_i$ for $i=1,\dots, N$ as well as inner expansions which are supposed to hold in a tubular neighborhood around the sharp-interfaces $\Gamma_{ij}$. Note that due to the construction in the previous section, the thickness of this tubular neighborhood is proportional to $\varepsilon$. In order to be compatible, both expansions must match in a suitable intermediate region by suitable matching conditions. This region is approximately given by all points $x\in \Omega$ with the property $\text{dist}(x,\Gamma)\le  \varepsilon^{\theta}$ for some fixed $\theta\in (0,1)$. This means we stretch the tubular neighborhood the inner expansions were constructed on from a thickness proportional to $\eps$ to a thickness proportional to $\eps^{\theta}$ and relate both expansions in this region. These matching conditions will be expressed as limit conditions for the inner expansions when $\varepsilon\to 0$ or equivalently $z\to \pm \infty$ depending on which side we approach the interface from.
This procedure is again standard in the context of formally matched asymptotics and we only state the matching conditions, for the computations see \cite{Fife}.

Using the notation 
\begin{align}
    (\B{v})_j(x)\coloneqq \underset{\delta \searrow 0}{\lim}\;\B{v}\big(x\pm\delta\B{n}_\Gamma(x)\big)
\end{align}
for the lowest order term we have the matching condition
\begin{alignat}{2}\label{Ph0}
\begin{aligned}
		\B{\Phi}_0(\bs,z)&\to
	\begin{cases}
		(\bphi_0)_j(x)=\B{e}_j&\quad \text{as }z\to +\infty,\\
		(\bphi_0)_i(x)=\B{e}_i&\quad \text{as }z\to -\infty,\\
	\end{cases}\\
	\partial_{z}\B{\Phi}_0(\bs,z)&=0\quad \text{as }z\to \pm\infty.	
\end{aligned}
\end{alignat}
For the term of order $\mathcal{O}(\eps)$ we have
\begin{align}\label{Ph1}
		\B{\Phi}_1(\bs,z)\approx
	\begin{cases}
		(\bphi_1)_j(x)+\big(\nabla \B{\varphi}_0\big)_j(x)\,\B{n}_\Gamma(x)\, z&\quad \text{as }z\to +\infty,\\
		(\bphi_1)_i(x)+\big(\nabla \B{\varphi}_0\big)_i(x)\,\B{n}_\Gamma(x)\, z&\quad \text{as }z\to -\infty
	\end{cases}
\end{align}
for all $x = \B{\gamma}(\bs)\in \Gamma=\Gamma_{ij}$. Note that here the symbol $\,\approx\,$ means that the difference of the left-hand side and the right-hand side as well as all its derivatives with respect to $z$ tend to zero as $z\to\pm\infty$. In particular \eqref{Ph1} provides us with
\begin{align}\label{dzP1}
  		\partial_z\B{\Phi}_1(\bs,z)\to
  	\begin{cases}
      	(\nabla \B{\varphi}_0)_j(x) \, \B{n}_\Gamma(x)&\quad \text{as }z\to +\infty,\\
      	(\nabla \B{\varphi}_0)_i(x) \, \B{n}_\Gamma(x)&\quad \text{as }z\to -\infty.
  	\end{cases}
\end{align}
The analogous relations also hold true for the expansions of $\B{w}^\eps_{n_r}$. 

In the following, we will also see that the \textit{jump} across the interfaces $\Gamma_{ij}$ is an important quantity. It is defined by
 \begin{align}
    \label{DEF:JUMP}
 	[\B{v}]_i^j(x)\coloneqq \underset{\delta \searrow 0}{\lim}\; 
 	    \Big( \B{v}\big( x+\delta \B{n}_\Gamma(x) \big)-\B{v}\big(x-\delta \B{n}_\Gamma(x) \big) \Big),
 \end{align}
for any $x = \B{\gamma}(\bs)\in \Gamma=\Gamma_{ij}$.
 
Now, we have made all the necessary computations to analyze the state equations and the gradient equality near the interfaces $\Gamma_{ij}$. In particular, we are able to investigate their behavior as $\eps\to 0$.

%%%%%%%%%%%%%%%%%%%%%%%%%%%%%%%%%%%%
%%%%%%%%%%%%LEADING ORDER%%%%%%%%%%%
%%%%%%%%%%%%%%%%%%%%%%%%%%%%%%%%%%%%

\section{Comparison of the leading order terms}\label{SEC:Leading}

Now, we want to apply our knowledge about the inner and outer expansions to the optimality system consisting of \eqref{state} and \eqref{PhStr}. This means we apply the formulas for the differential operators discussed in Section~\ref{SEC:inner} to the optimality system, compare the terms with same orders in $\eps$ and apply the matching conditions. In this section, we will suppress the index $n_r$ to provide a clearer notation.

\subsection{Comparison of the leading order terms in the state equation}
We point out that our state equation \eqref{state} differs from the one in \cite{Blank} only in terms of the right-hand side. In contrast to \cite{Blank}, where the right-hand side is just a given function $\B{f}$, our right-hand is given by
\begin{align}
    \label{state:rhs}
    \lambda^{\eps,\B{\varphi}}
    \rho(\B{\varphi})\B{w}^{\eps,\B{\varphi}}.
\end{align}
In particular, it depends on the phase-field $\bphi$ as well as the corresponding eigenvalue $\lambda^{\eps,\bphi}$ and its associated eigenfunction $\B{w}^{\eps,\B{\varphi}}$. Recall that  the inner expansion of $\lambda^{\eps,\B{\varphi}}$ equals its outer expansion (cf.~Remark~\ref{REM:EXP:LAMBDA}) as the eigenvalue does not depend locally on $x\in\Omega$.
As no derivatives of $\rho$, $\B{w}^{\eps,\B{\varphi}}$ or $\B{\varphi}$ are involved, we conclude that the inner expansion of \eqref{state:rhs}
possesses only summands of non-negative orders in $\eps$. 
As the discussion of the left-hand side of the state equation works exactly as in \cite{Blank}, we can thus proceed in a completely analogous manner. We will therefore only summarize the most important results.

For the functions $\bW_0$ and $\bW_1$ involved in the inner expansion of the eigenfunction, we deduce the following relations:
\begin{alignat}{2}
    \label{W0match}
    \partial_z\bW_0(\bs,z)&\to \B{0} 
    &&\quad\text{as $z\to\pm\infty$},
    \\
    \label{Windz}
    \partial_z\bW_0&=\B{0} 
    &&\quad\text{around }\Gamma_{ij},
    \\
    \label{e-1}
	\partial_z
		\left[
			\overline{\mathbb{C}}(\B{\Phi}_0)
				\left(
					\partial_z\bW_1\otimes\B{\nu}+\nabla_{\Gamma}\bW_0
				\right)^{\text{sym}}\B{\nu}
		\right]
	&= \B{0}
	&&\quad\text{around }\Gamma_{ij},
\end{alignat}
\begin{align}
    \label{wzer}
		\bW_0(\bs,z)
	&\to
		\begin{cases}
			\left(\B{w}_0\right)_j(x)&\text{as } z\to +\infty,\\
			\left(\B{w}_0\right)_i(x)&\text{as } z\to -\infty,
		\end{cases}
	\\
	\label{W0W1}
		\nabla_{\Gamma}\bW_0(\bs,z)+\partial_z\bW_1(\bs,z)\otimes\B{\nu}(\bs)
	&\to
		\begin{cases}
			\left(\nabla_{x}\B{w}_0\right)_j(x)&\text{as }z\to +\infty,\\
			\left(\nabla_{x}\B{w}_0\right)_i(x)&\text{as }z\to -\infty,
		\end{cases}
\end{align}
for all $x = \B{\gamma}(\bs)\in \Gamma_{ij}$. Here and in the remainder of this paper, the expression ``around $\Gamma_{ij}$'' means that the statement is valid in a sufficiently thin tubular neighborhood around $\Gamma_{ij}$ where our inner expansions hold.
We thus arrive at the jump condition 
\begin{align}
    \left[\B{w}_0\right]_i^j=\B{0} \quad\text{for all $i,j=1,\dots, N$}.
\end{align}
However, we point out that the jump condition on an interface between a material region and a void region (i.e., $i=N$ or $j=N$) is negligible as we do not have any information about the behavior of $\B{w}_0$ in the void. In other words, we will obtain a closed system of PDEs forming the state equations of the sharp-interface problem in Section~\ref{SEC:limitpr} without needing this additional jump condition at the material-void boundary.

For the function $\overline{\mathbb{C}}(\B{\Phi}_0(z))$, where $\B{\Phi}_0$ is the lowest order term of the inner expansion of the phase-field, we obtain:
\begin{align}
    \label{Cmatch}
		\overline{\mathbb{C}}(\B{\Phi}_0(\bs,z))
	&\to
		\begin{cases}
			\mathbb{C}^j&\text{as } z\to +\infty,\\
			\mathbb{C}^i&\text{as } z\to -\infty,
		\end{cases}
	\quad\text{if $i,j \neq N$},
\end{align}
\begin{alignat}{2}
	\begin{aligned}\label{CVaneN}
	    	\overline{\mathbb{C}}(\B{\Phi}_0(\bs,z))
	        & \to 0 \quad\text{as } z\to +\infty,
	        \quad\text{if $j = N$},\\
	    	\overline{\mathbb{C}}(\B{\Phi}_0(\bs,z))
	        & \to 0 \quad\text{as } z\to -\infty,
	        \quad\text{if $i = N$}\\	        
	\end{aligned}
\end{alignat}
Here, the convergence \eqref{CVaneN} follows due to the additional factor $\varepsilon$ in the void contribution of $\overline{\mathbb{C}}(\bphi^\eps)$ (see~\eqref{EXP:CRHO}).

Eventually, we obtain that
\begin{align}\label{CEnu}
		\mathbb{C}^i\mathcal{E}_i(\B{w}_0)\B{n}_\Gamma
	=
		\begin{cases}
			\B{0}&\quad \text{if }j=N,\\
			\mathbb{C}^j\mathcal{E}_j(\B{w}_0)\B{n}_\Gamma &\quad \text{if }j\neq N,
		\end{cases}
% 	\quad\text{for $i\neq N$},
\end{align}
holds on each $\Gamma_{ij}$ with $i\neq N$, where 
\begin{align*}
    \mathcal{E}_i(\B{w}_0)\coloneqq \underset{\delta \searrow 0}{\lim}\; \mathcal{E}(\B{w}_0)(x-\delta \B{n}_{\Gamma})
    \quad\text{and}\quad
    \mathcal{E}_j(\B{w}_0)\coloneqq \underset{\delta \searrow 0}{\lim}\; \mathcal{E}(\B{w}_0)(x+\delta \B{n}_{\Gamma}).
\end{align*}

\subsection{Comparison of the leading order terms in the gradient equality}

Now, we want to analyse the gradient equality \eqref{PhStr}, which reads as

\begin{align}\label{GEeps}
    &\begin{aligned}
    \sum_{r=1}^{l}\Big\{
         [\partial_{\lambda_{n_r}}\hspace{-0.7ex}\Psi]\left(\lambda_{n_1}^{\eps},\dots,\lambda_{n_l}^{\eps}\right)
        &\Big[
             \langle
                 \mathcal{E}(\B{w}_{n_r}^{\eps}),\mathcal{E}(\B{w}_{n_r}^{\eps})
             \rangle_{ P_{T\Sigma}[\mathbb{C}^{\prime}(\bphi^\eps)]}   
    \\[-1ex]
        &\qquad -\lambda_{n_r}^{\eps}
                 \big(
                     \B{w}_{n_r}^{\eps},\B{w}_{n_r}^{\eps}
                 \big)_{P_{T\Sigma}[\rho^{\prime}(\bphi^\eps)]}
         \Big]
    \Big\}
    \end{aligned} 
    \notag\\[1ex]
    &\quad=\gamma\varepsilon \Delta\bphi^\eps
        +\frac{1}{\varepsilon}
        (\B{\Lambda}^\eps+\B{\vartheta}^\eps+\B{\mu}^\eps)       
        -\frac{\gamma}{\varepsilon}P_{T\Sigma}\left[
        \psi_{0}^{\prime}(\bphi^\eps)\right].
\end{align}

Here, we recall that the Lagrange multipliers were constructed in Theorem~\ref{Thm:Lagr} in such a way that their sum appearing in the gradient equality \eqref{GEeps} is scaled by the factor~$\tfrac 1\eps$. 
We now assume the Lagrange multipliers to have the following inner asymptotic expansions:
\begin{align}
\label{EXP:LAG}
    \B{\Lambda}^{\varepsilon}(x)=\sum_{k=0}^{\infty}\varepsilon^{k}\B{\Lambda}_k(\bs,z)\quad
    \B{\vartheta}^{\varepsilon}=\sum_{k=0}^{\infty}\varepsilon^{k}\B{\vartheta}_k,\quad
    \B{\mu}^{\varepsilon}(x)=\sum_{k=0}^{\infty}\varepsilon^{k}\B{\mu}_k(\bs,z),
\end{align}

Furthermore, in order to deal with the nonlinear terms in \eqref{GEeps} involving $\C^\prime$, $\rho^{\prime}, \psi_0^\prime, \partial_{\lambda_{n_r}} \Psi$, we perform a (componentwise) Taylor expansion around the leading order term $\B{\Phi}_0$ to obtain the inner expansions
\begin{align*}
     \C^\prime(\bphi^\eps)&={\C}^\prime(\B{\Phi}_0)+\mathcal{O}(\eps),\\
     \rho^\prime(\bphi^\eps)&={\rho}^\prime(\B{\Phi}_0)+\mathcal{O}(\eps),\\
    \psi_0^\prime(\bphi^\eps)&=\psi_0^\prime(\B{\Phi}_0)+\mathcal{O}(\eps),\\
    [\partial_{\lambda_{n_r}}\hspace{-0.5ex}\Psi]\big(\lambda_{n_1}^\eps,\dots,\lambda_{n_l}^\eps\big)&=
    [\partial_{\lambda_{n_r}}\hspace{-0.5ex}\Psi]\big(\lambda_{0,n_1},\dots,\lambda_{0,n_l}\big)+\mathcal{O}(\eps).
\end{align*}
We now take a closer look at the quantities $\C^\prime(\B{\Phi}_0)$ and $\rho^{\prime}(\B{\Phi}_0)$. To this end, we recall the definition of $\rho$ in \eqref{exrho}, which reads as
\begin{align*}
\rho:\R^N\to \mathbb{R},\quad
\B{\varphi}\mapsto \sum_{i=1}^{N-1}\rho^{i}\big(\sigma_{\omega}\circ\beta_M\big)(P_{\Sigma}(\bphi)^i)+
\tilde{\rho}^N\varepsilon^l \big(\sigma_{\omega}\circ\beta_V\big) (P_{\Sigma}(\bphi)^N).
\end{align*}
Thus, it is clear that $\rho' = \overline{\rho}' +  \mathcal{O}(\eps)$.
Note that we can write the projection $P_{\Sigma}$ as 
\begin{align*}
    P_{\Sigma}(\bphi)=
    \bphi-
    \left(\frac{1}{N}\sum_{i=1}^N\varphi^i\right)\mathbf{1}+
    \frac{1}{N}\mathbf{1},
\end{align*}
for $\bphi\in \R^N$, where $\mathbf{1} = (1,...,1)^T \in \R^N$.
For the partial derivatives with respect to $\varphi^j$ with $j\in\{1,...,N\}$, we thus obtain
\begin{align*}
    (\partial_jP_{\Sigma})(\bphi)=\B{e}_j-\frac{1}{N}\mathbf{1}
\end{align*}
and therefore,
\begin{align*}
    (\partial_j\overline{\rho})(\bphi)=
    \sum_{i=1}^{N-1}\rho^i \big(\sigma_{\omega}^\prime\circ\beta_M\big)(P_{\Sigma}(\bphi)^i)\beta_M^\prime(P_{\Sigma}(\bphi)^i)\left(\delta_{ij}-\frac{1}{N}\right),
\end{align*}
where $\delta_{ij}$ denotes the Kronecker delta. Inserting $\B{\Phi}_0$ (which belongs pointwise to $\B{G}\subset\Sigma^N$ and thus, no projection is necessary) and recalling that $\sigma_{\omega}$ is the identity on $[0,1]$ (cf.~\eqref{cutoff}), using \eqref{eq:def_inter} we arrive at
\begin{align}\label{rhoPPhi}
    \overline{\rho}^{\prime}(\B{\Phi}_0)=((\partial_j\overline{\rho})(\B{\Phi}_0))_{j=1}^{N}=
    \big(\rho^j\beta_M(\phi_0^j)\beta_M^\prime(\phi^j_0)(1-\delta_{Nj})-\frac1N\sum_{i=1}^{N-1}\rho^i\beta_M(\phi^i_0)\beta_M^\prime(\phi^i_0)\big)_{j=1}^{N}.
\end{align}
Thus, considering the inner expansion of $\rho^\prime(\bphi^\eps)$ to the lowest order $\mathcal{O}(1)$ gives
\begin{align}
    \begin{aligned}\label{ovlRPh}
    \rho^\prime(\bphi^\eps)=&\overline{\rho}^\prime(\B{\Phi}_0)\\
    =&\Bigg(\rho^1\beta_M(\phi_0^1)\beta_M^\prime(\phi^1_0)-\frac1N\sum_{i=1}^{N-1}\rho^i\beta_M(\phi^i_0)\beta_M^\prime(\phi^i_0),\;\dots\;,\\
    &\quad\rho^{N-1}\beta_M(\phi_0^{N-1})\beta_M^\prime(\phi^{N-1}_0)-\frac1N\sum_{i=1}^{N-1}\rho^i\beta_M(\phi^i_0)\beta_M^\prime(\phi^i_0),\\
    &\quad-\frac1N\sum_{i=1}^{N-1}\rho^i\beta_M(\phi^i_0)\beta_M^\prime(\phi^i_0)\Bigg)^T.
    \end{aligned}
\end{align}
Thus, it obviously holds $\overline{\rho}^\prime(\B{\Phi}_0)\in T\Sigma^N$. 
The function $\C^\prime(\B{\Phi}_0)$ can be expressed analogously.
Altogether, this allows us to drop the projection acting on the left-hand side in \eqref{GEeps} when considering only the lowest order contributions.

Now that we have considered all the quantities appearing in \eqref{GEeps}, we begin with our formal asymptotics.
First of all, applying formula \eqref{Jb} on the lowest order contribution $\bW_0$ of the inner expansion of $\B{w}^\eps$, we find that
\begin{equation}\label{symW}
 		\mathcal{E}(\B{w}^\eps)
 	=
 		\big(\nabla_x\B{w}^\eps\big)^{\mathrm{sym}}
 	=
 		\left(
             \nabla_{\Gamma}\bW_0+\frac{1}{\varepsilon}\partial_{z}\bW_0\otimes\B{\nu}
         \right)^{\mathrm{sym}} + \mathcal{O}(\eps).
\end{equation}
Comparing the contributions of order $\mathcal{O}(\eps^{-2})$ in \eqref{GEeps}, we use \eqref{symW} to obtain 
\begin{align*}
    \B{0}=
    \sum_{r=1}^l
    &[\partial_{\lambda_{n_r}}\hspace{-0.7ex}\Psi]\left(\lambda_{0,n_1},\dots,\lambda_{0,n_l}\right)
    \\
    &\cdot \Big[\overline{\C}^\prime(\B{\Phi}_0)
    \big(\partial_z\bW_{0,n_r}\otimes\B{\nu}\big)^{\text{sym}}:
	\big(\partial_z\bW_{0,n_r}\otimes\B{\nu}\big)^{\text{sym}}\Big],
	%\quad\text{around }\Gamma_{ij}.
\end{align*}
This equation is obviously fulfilled since $\partial_z\bW_0$ vanishes according to \eqref{Windz}. 

Let us now consider \eqref{GEeps} to order $\mathcal{O}(\eps^{-1})$. First of all, we infer from \eqref{Windz} and \eqref{symW} that the left-hand side has no contribution of order $\mathcal{O}(\eps^{-1})$. We thus have
\begin{alignat}{2}\label{defPh}
\begin{aligned}
    \B{0}=
    \gamma \partial_{zz}\B{\Phi}_0
    +(\B{\Lambda}_0+\B{\vartheta}_0+\B{\mu}_0)
    -\gamma P_{T\Sigma}\left[\psi_0^{\prime}(\B{\Phi}_0)\right],
\end{aligned}
\end{alignat}
where we used the formula \eqref{Laplace} to compute the Laplacian.
Multiplying \eqref{defPh} by $\partial_z\B{\Phi}_0$ and integrating with respect to $z$ from $-\infty$ to $\infty$, we deduce
\begin{alignat}{2}\label{0dzP}
\begin{aligned}
	&
    -\int_{-\infty}^{\infty}
        (\B{\Lambda}_0+\B{\vartheta}_0+\B{\mu}_0)\cdot\partial_{z}\B{\Phi}_0
        \mathrm{\,d}z\\
	&\quad =
            \gamma\int_{-\infty}^{\infty}
           \partial_{zz}\B{\Phi}_0\cdot\partial_{z}\B{\Phi}_0
             \mathrm{\,d}z
		-\gamma
            \int_{-\infty}^{\infty}
             P_{T\Sigma}\left[\psi_0^{\prime}(\B{\Phi}_0)\right]\partial_{z}\B{\Phi}_0
             \mathrm{\,d}z.
\end{aligned}             
\end{alignat}
Now, we consider each of the terms in \eqref{0dzP} separately. First of all, we see
\begin{align}\label{PIPh}
    \begin{aligned}
        &\int_{-\infty}^{\infty}
                \partial_{zz}\B{\Phi}_0\cdot\partial_{z}\B{\Phi}_0
            \mathrm{\,d}z
        =
            \int_{-\infty}^{\infty}
                \frac{1}{2}\frac{\text{d}}{\text{d}z}
                \left\vert\partial_{z}\B{\Phi}_0\right\vert^2
            \mathrm{\,d}z
        \\
        &\quad=
            \frac{1}{2}
            \left(\underset{z\to +\infty}{\lim}\partial_z\B{\Phi}_0(z)-
            \underset{z\to -\infty}{\lim}\partial_z\B{\Phi}_0(z)\right)
        =\B{0}, 
    \end{aligned}       
\end{align}
where the last equality follows from the matching condition \eqref{Ph0}.

\pagebreak[2]

As $\B{\Phi}_0\in \B{G}$ pointwise, we know that $\partial_{z}\B{\Phi}_0\in T\Sigma^N$ pointwise. Hence, we obtain
\begin{align}
\label{PTZPsi0}
\begin{aligned}
	&\int_{-\infty}^{\infty}
            P_{T\Sigma}\left[\psi_0^{\prime}(\B{\Phi}_0)\right]\partial_{z}\B{\Phi}_0
         \mathrm{\,d}z
	=
		\int_{-\infty}^{\infty}
          \psi_0^{\prime}(\B{\Phi}_0)\partial_{z}\B{\Phi}_0
        \mathrm{\,d}z
    \\
	&=
        \int_{-\infty}^{\infty}
            \frac{\text{d}}{\text{d}z}\left[\psi_0(\B{\Phi}_0)\right]
         \mathrm{\,d}z
	=\underset{z\to +\infty}{\lim}
             \psi_0(\B{\Phi}_0(z))
          -\underset{z\to -\infty}{\lim}
             \psi_0(\B{\Phi}_0(z))
    =\B{0}.
\end{aligned}
\end{align}
For the last equality, we used the fact that $\psi_0$ vanishes on $\B{e}_i$ for $i=1,\dots,N$ along with the matching condition \eqref{Ph0}. We have thus shown that the right-hand side of \eqref{0dzP} vanishes.

Recall from \eqref{LamEqComp} that $\B{\Lambda}^{\eps}$ is identical in each component. It is therefore natural to assume that every term in the inner expansion of $\B{\Lambda}^\eps$ also has this property. Thus, recalling that $\partial_{z}\B{\Phi}_0\in T\Sigma^N$ pointwise, we infer
\begin{equation}\label{LamPhi0}
        \int_{-\infty}^{\infty}
            \B{\Lambda}_0\cdot\partial_z\B{\Phi}_0
        \mathrm{\,d}z
    =
        \int_{-\infty}^{\infty}
            \Lambda_0\sum_{i=1}^{N}[\partial_z\B{\Phi}_0]^i
        \mathrm{\,d}z
   =0,    
\end{equation}
where $\Lambda_0$ denotes an arbitrary component of $\B{\Lambda}_0$.

Recall from Theorem~\ref{Thm:Lagr} that $\B{\vartheta}^{\eps}\in \R^N$ is constant. Thus, assuming that this property is transferred to the inner expansion, $\B{\vartheta}_0$ is independent of $z$, we infer by means of the matching condition \eqref{Ph0} that
\begin{align}\label{PImu}
    	\int_{-\infty}^{\infty}
            \B{\vartheta}_0\cdot\partial_{z}\B{\Phi}_0
         \mathrm{\,d}z
      =
    	\int_{-\infty}^{\infty}
            \frac{\text{d}}{\text{d}z}[\B{\vartheta}_0\cdot\B{\Phi}_0]
         \mathrm{\,d}z        
      =
        \B{\vartheta}_0\cdot\left(\B{e}_j-\B{e}_i\right).
\end{align}

Eventually, we want to justify that the remaining Lagrange multiplier fulfills
\begin{align}
	\label{mu0}
    \int_{-\infty}^{\infty}\B{\mu}_0\cdot\partial_{z}\B{\Phi}_0\mathrm{\,d}z=0.
\end{align}
Therefore, we recall \eqref{muphiVan0} which tells us for $i=1,\dots,N$ that 
\begin{align*}
    \mu_i^{\eps}=0 \quad\text{a.e.~in}\;\; \Omega_{i}^{+}
    =\big\{\bx\in \Omega \,\big|\, \varphi_i^{\eps}(\bx)>0\big\}
    =\Omega\backslash \big\{\bx\in \Omega\ \,\big|\, \varphi_i^{\eps}(\bx)=0\big \}.
\end{align*}
Using \cite[Lemma 7.7]{Gilbarg}, we infer that for all $i\in\{1,\dots,N\}$,
\begin{align}\label{GilVan}
    \mu_i^{\eps}\, \nabla_x\varphi_i^{\eps}=\B{0}. 
\end{align}
Using \eqref{gradx} and comparing the terms of order $\mathcal{O}(\eps^{-1})$, we deduce
\begin{align}
\label{EQ:MUDZPHI}
    \mu_0^i\, \partial_z\Phi^i_0\,\B{\nu}=\B{0} 
\end{align}
for all $i\in\{1,\dots,N\}$.
In particular, by multiplying with $\B{\nu}$ and integrating with respect to $z$ from $-\infty$ to $\infty$, we arrive at
\begin{align*}
    \int_{-\infty}^{\infty}\mu_0^i(z)\partial_z\Phi^i_0(z)\mathrm{\,d}z=0.
\end{align*}
for all $i\in\{1,\dots,N\}$.
This proves \eqref{mu0}.

Combining \eqref{PIPh}--\eqref{mu0}, we conclude from \eqref{0dzP} that
\begin{align*}
        \B{\vartheta}_0\cdot(\B{e}_j-\B{e}_i)=0,
\end{align*}
for all $i,j=1,\dots,N$, meaning that all components of $\B{\vartheta}_0$ are equal. Since $\B{\vartheta}^\eps\in T\Sigma^N$ in \eqref{thetaSum0}, we also assume $\B{\vartheta}_0\in T\Sigma^N$. This implies that $\B{\vartheta}_0 = \B{0}$ and thus, \eqref{defPh} can be rewritten as
\begin{align}\label{Pheq}
	\B{0}=-\gamma\partial_{zz}\B{\Phi}_0+\gamma P_{T\Sigma}\left[\psi_0^{\prime}(\B{\Phi}_0)\right]-\B{\Lambda}_0-\B{\mu}_0.
\end{align}
Let now $\tilde{z}\in \R$ be arbitrary. Multiplying \eqref{Pheq} by $\partial_{z}\B{\Phi}_0$ and integrating with respect to $\tilde{z}$ from $-\infty$ to $\tilde{z}$, we obtain 
\begin{align*}
\int_{0}^{\tilde{z}}\frac{1}{2}\frac{\text{d}}{\text{d}z}\left\vert\partial_{z}\B{\Phi}_0\right\vert^2\mathrm{\,d}z=
\int_{0}^{\tilde{z}}\frac{\text{d}}{\text{d}z}\left[\psi_0(\B{\Phi}_0)\right]\mathrm{\,d}z
-\frac{1}{\gamma}
    \int_{0}^{\tilde{z}}
        \left(\B{\Lambda}_0+\B{\mu}_0\right)\cdot\partial_{z}\B{\Phi}_0
    \mathrm{\,d}z.
\end{align*}
Here, the last equality holds because of \eqref{LamPhi0} and \eqref{mu0}. By the fundamental theorem of calculus, we thus have
\begin{align*}
   \left\vert\partial_{z}\B{\Phi}_0(\tilde{z})\right\vert^2-2\psi_0(\B{\Phi}_0(\tilde{z}))=
    \left\vert\partial_{z}\B{\Phi}_0(0)\right\vert^2-2\psi_0(\B{\Phi}_0(0))
\end{align*}
for all $\tilde{z}\in\R$. We further know from the matching condition \eqref{Ph0} that the left-hand side vanishes as $\tilde{z}\to\pm \infty$. This entails
\begin{align}\label{PhiProfile:0}
    \left\vert\partial_{z}\B{\Phi}_0(0)\right\vert^2-2\psi_0(\B{\Phi}_0(0))=0,
\end{align}
and thus, we arrive at
\begin{align}
    \label{PhiProfile}
    \left\vert\partial_{z}\B{\Phi}_0({z})\right\vert^2=2\psi_0(\B{\Phi}_0({z})) \quad\text{for all } z\in\R.   
\end{align}
In order to obtain further information, we next show that \eqref{Pheq} can be interpreted as the first-order optimality condition of a particular optimization problem that is similar to the minimization of the one-dimensional Ginzburg--Landau energy.
Therefore, we first assume that 
\begin{align}
\label{DEF:SIGMAIJ}
   \sigma_{ij}\coloneqq\inf\left\{
    \int_{-1}^{1}\sqrt{2\psi_0(\B{\theta}(t))}\abs{\B{\theta}^\prime(t)}\mathrm{\,d}t
    \suchthat 
    \begin{aligned}
    &\B{\theta}\in C^{0,1}([0,1];\R^N),\;
    \B{\theta}\in \B{G} \text{ pointwise},
    \\
    &\B{\theta}(1)=\B{e}_j
    \;\;\text{and}\;\;
    \B{\theta}(-1)=\B{e}_i 
    \end{aligned}
   \right\}
\end{align}
possesses a minimizer, which we call $\B{\theta}_{ij}$. This means that $\B{\theta}_{ij}$ is a geodesic with respect to the degenerate metric induced by the potential $\psi_0$ that connects the values $\B{e}_i$ and $\B{e}_j$. Now, proceeding as in \cite[proof of formula (15)]{Sternberg}, this geodesic can be used to construct a minimizer $\B{\Phi}$ of the problem
\begin{align}\label{OptTrans}
    \inf\left\{
        \int_{-\infty}^{+\infty}\abs{\partial_z \B{\Phi}}^2+2\psi_0(\B{\Phi})\mathrm{\,d}z
    \suchthat
    \begin{aligned}
    &\B{\Phi}\in C^{0,1}([0,1];\R^N),\;
    \B{\Phi}\in \B{G} \text{ pointwise},
    \\
    &\underset{z\to \infty}{\lim}\B{\Phi}(z)=\B{e}_j
    \;\;\text{and}\;\;
    \underset{z\to -\infty}{\lim}\B{\Phi}(z)=\B{e}_i
    \end{aligned}
    \right\}.
\end{align}
This means that $\B{\Phi}$ describes an optimal transition between the values $\B{e}_i$ and $\B{e}_j$.
As in \cite[proof of formula (15)]{Sternberg}, we further see that $\B{\Phi}$ solves \eqref{Pheq} and \eqref{PhiProfile}, where $\B{\Lambda}_0+\B{\mu}_0$ is the Lagrange multiplier for the Gibbs--Simplex constraint. 
Consequently, choosing $\B{\Phi}_0=\B{\Phi}$ we have found a solution of \eqref{Pheq} and \eqref{PhiProfile}. Moreover, \cite[formula (15)]{Sternberg} states that $2\sigma_{ij}$ is exactly the value of the minimum sought in~\eqref{OptTrans}. 

As the minimizer $\B{\Phi}_0=\B{\Phi}$ of \eqref{OptTrans} satisfies \eqref{PhiProfile}, we further conclude
\begin{align}\label{Phrel}
    \sigma_{ij}
	=	\int_{-\infty}^{\infty}\abs{\partial_z\B{\Phi}_0}^2\mathrm{\,d}z
	=
		2\int_{-\infty}^{\infty}\psi_0(\B{\Phi}_0)\mathrm{\,d}z<\infty,
\end{align}
which will be important for later purposes.
\pagebreak[2]

Finally, we now consider \eqref{GEeps} to the order $\mathcal{O}\left(1\right)$.
Using \eqref{Laplace} to reformulate the term $\gamma \varepsilon \Delta\B{\varphi}$, employing \eqref{Windz}, and recalling that \eqref{ovlRPh} holds analogously for $\overline{\C}^\prime(\B{\Phi}_0)$, we conclude

\begin{alignat}{2}\label{P1Eq}
	\begin{aligned}
	    &
        \frac{1}{\gamma}(\B{\Lambda}_1+\B{\vartheta}_1+\B{\mu}_1)
	 		+\partial_{zz}\B{\Phi}_1
	 		-P_{T\Sigma}\left[\psi_0^{\prime\prime}(\B{\Phi}_0)\B{\Phi}_1\right]
	 	\\
	 	&\quad=
	 		\hat{\kappa}\partial_{z}\B{\Phi}_0+
	 		\frac{1}{\gamma}
	 				\sum_{r=1}^{l}
	 					\Big\{[\partial_{\lambda_{n_r}}\hspace{-0.7ex}\Psi](\lambda_{0,n_1},\dots,\lambda_{0,n_l})
	 	\\
	 	&\qquad	            \cdot\Big[
	 								\overline{\mathbb{C}}^{\prime}(\B{\Phi}_0)
 									(\nabla_{\Gamma}\bW_{0,n_r}+\partial_z\bW_{1,n_r}\otimes\B{\nu})^{\text{sym}}
 									:(\nabla_{\Gamma}\bW_{0,n_r}+\partial_z\bW_{1,n_r}\otimes\B{\nu})^{\text{sym}}
	 	\\
	 	&\qquad\qquad
	 								-\lambda_{0,n_r} \overline{\rho\,}^{\,\prime}(\B{\Phi}_0)\abs{\bW_{0,n_r}}^2
	 						\Big]
	 					\Big\}.
	\end{aligned}
\end{alignat}
We now multiply this equation by $\partial_{z}\B{\Phi}_0$ and integrate with respect to $z$ from $-\infty$ to $\infty$. Let us consider each term of the resulting equation separately.
Analogously to \eqref{LamPhi0} and \eqref{PImu}, we obtain
\begin{align}
    \label{EQ:LAM:1}
    \int_{-\infty}^{\infty}\B{\Lambda}_1\cdot\partial_{z}\B{\Phi}_0\mathrm{\,d}z&=0\quad\text{and}
    \\
    \label{EQ:THETHA:1}
	\int_{-\infty}^{\infty}\B{\vartheta}_1\cdot\partial_{z}\B{\Phi}_0\mathrm{\,d}z
	&=\B{\vartheta}_1\cdot\left(\B{e}_j-\B{e}_i\right). 
\end{align}
Considering the Lagrange multiplier $\B{\mu}$, we recall \eqref{GilVan}        
\begin{align}
    \mu_i^{\eps}\, \nabla_x\varphi_i^{\eps}=\B{0}.
\end{align}    
Due to \eqref{gradx}, its contribution of leading order $\mathcal{O}(1)$ in inner coordinates is given by
\begin{align*}
    \mu_1^i\partial_z\Phi_0^i\B{\nu}+
    \mu_0^i\nabla_{\Gamma}\Phi_0^i+
    \mu_0^i\partial_z\Phi_1^i\B{\nu}
    =\B{0}
\end{align*}
for all $i\in\{1,\dots,N\}$.
Multiplying this identity by $\B{\nu}$ and integrating the resulting equation with respect to $z$, we infer 
\begin{align}
    \label{EQ:MU:1}
    \int_{-\infty}^{\infty}\B{\mu}_1\cdot\partial_{z}\B{\Phi}_0\mathrm{\,d}z
    &=-\int_{-\infty}^{\infty}\B{\mu}_0\cdot\partial_{z}\B{\Phi}_1\mathrm{\,d}z.
\end{align}

Furthermore, applying integration by parts twice and using that due to the matching condition \eqref{Ph0} all derivatives of $\B{\Phi}_0$ with respect to $z$ tend to $0$ as $z\to \pm\infty$, we obtain
\begin{align}
    \label{EQ:DZZPHI:1}
		\int_{-\infty}^{\infty}\partial_{zz}\B{\Phi}_1\cdot\partial_{z}\B{\Phi}_0\mathrm{\,d}z
	=
		\int_{-\infty}^{\infty}\partial_{zz}\left(\partial_{z}\B{\Phi}_0\right)\cdot\B{\Phi}_1\mathrm{\,d}z.
\end{align}
As $\partial_{z}\B{\Phi}_0$ attains its values only in $T\Sigma^N$, we deduce
\begin{align}
    \label{EQ:PTZPHI:1}
		\int_{-\infty}^{\infty}
			P_{T\Sigma}\left[\psi_0^{\prime\prime}(\B{\Phi}_0)\B{\Phi}_1\right]\cdot\partial_z\B{\Phi}_0
		\mathrm{\,d}z
	&=
		\int_{-\infty}^{\infty}
			\psi_0^{\prime\prime}(\B{\Phi}_0)\, \partial_z\B{\Phi}_0\cdot\B{\Phi}_1
		\mathrm{\,d}z
\end{align}
due to the symmetry of the Hessian matrix.
Moreover, recalling that $\bW_{0}$ is independent of $z$ due to \eqref{Windz}, a simple computation yields
\begin{align}
    \label{EQ:RHOPHI:1}
\begin{aligned}
        \int_{-\infty}^{\infty}
            \overline{\rho}^{\prime}(\B{\Phi}_0)\partial_z\B{\Phi}_0\abs{\bW_0}^2
        \mathrm{\,d}z
    &=
        \int_{-\infty}^{\infty}
            \left[\frac{\text{d}}{\text{d}z}\overline{\rho}(\B{\Phi}_0)\right]\abs{\bW_0}^2
        \mathrm{\,d}z\\
    &=
        \int_{-\infty}^{\infty}
            \frac{\text{d}}{\text{d}z}\left[\overline{\rho}(\B{\Phi}_0)\abs{\bW_0}^2\right]
        \mathrm{\,d}z.  
\end{aligned}
\end{align}
Furthermore, by the definition of the dyadic product, it holds
\begin{alignat*}{2}
	&
		\left(\nabla_{\Gamma}\bW_0+\partial_{z}\bW_1\otimes\B{\nu}\right)^{\text{sym}}
		\B{\nu} \cdot \partial_{zz}\bW_1\\
	&\quad = 
		\left(\nabla_{\Gamma}\bW_0+\partial_{z}\bW_1\otimes\B{\nu}\right)^{\text{sym}}:
		\left(\partial_{zz}\bW_1\otimes\B{\nu}\right)^{\text{sym}}.
\end{alignat*}
Now we use \eqref{Windz} (which directly entails $\partial_z\nabla_{\Gamma}\bW_0=\B{0}$), \eqref{e-1} and $\partial_{z}\B{\nu}=\B{0}$ to deduce
\begin{align}
\label{EQ:CPHI:1}
\begin{aligned}
	&
		\int_{-\infty}^{\infty}
        \overline{\mathbb{C}}^{\prime}(\B{\Phi}_0)\partial_z\B{\Phi}_0
		\left(\nabla_{\Gamma}\bW_0+\partial_{z}\bW_1\otimes\B{\nu}\right)^{\text{sym}}:
		\left(\nabla_{\Gamma}\bW_0+\partial_{z}\bW_1\otimes\B{\nu}\right)^{\text{sym}}
	\mathrm{\,d}z\\
	&\quad=
		\int_{-\infty}^{\infty}
		\left[\frac{\text{d}}{\text{d}z}     
            \overline{\mathbb{C}}(\B{\Phi}_0)
        \right]   
		\left(\nabla_{\Gamma}\bW_0+\partial_{z}\bW_1\otimes\B{\nu}\right)^{\text{sym}}:
		\left(\nabla_{\Gamma}\bW_0+\partial_{z}\bW_1\otimes\B{\nu}\right)^{\text{sym}}
	\mathrm{\,d}z\\
	&\quad=
		\int_{-\infty}^{\infty}
			\frac{\text{d}}{\text{d}z}
				\left[
					\overline{\mathbb{C}}(\B{\Phi}_0)
					\left(\nabla_{\Gamma}\bW_0+\partial_{z}\bW_1\otimes\B{\nu}\right)^{\text{sym}}:
					\left(\nabla_{\Gamma}\bW_0+\partial_{z}\bW_1\otimes\B{\nu}\right)^{\text{sym}}
				\right]
		\mathrm{\,d}z\\
	&\qquad
		-2\int_{-\infty}^{\infty}
			\frac{\text{d}}{\text{d}z}
				\left[
					\overline{\mathbb{C}}(\B{\Phi}_0)
					\left(\nabla_{\Gamma}\bW_0+\partial_{z}\bW_1\otimes\B{\nu}\right)^{\text{sym}}
					\B{\nu}\cdot\partial_{z}\bW_1
				\right]
			\mathrm{\,d}z
\end{aligned}
\end{align}
by means of the product rule and integration by parts.
\pagebreak[2]

Collecting \eqref{EQ:LAM:1}--\eqref{EQ:CPHI:1} and recalling \eqref{Phrel}, we eventually obtain
\begin{alignat}{2}\label{O1InnerInt}
\begin{aligned}
	& \B{\vartheta}_1\cdot\left(\B{e}_j-\B{e}_i\right)
	\\
	&\quad +\int_{-\infty}^{\infty}
			\left(
				\partial_{zz}\left(\partial_{z}\B{\Phi}_0\right)-\psi_0^{\prime\prime}\left(\B{\Phi}_0\right)\partial_{z}\B{\Phi}_0
			\right)\cdot \B{\Phi}_1\mathrm{\,d}z
			-\int_{-\infty}^{\infty}\B{\mu}_0\cdot\partial_{z}\B{\Phi}_1\mathrm{\,d}z\\
	&=
		-\frac{1}{\gamma}
			\sum_{r=1}^{l}
                    [\partial_{\lambda_{n_r}}\hspace{-0.7ex}\Psi]
					\left(\lambda_{0,n_1},\dots,\lambda_{0,n_l}\right)
                    \lambda_{0,n_r}
						\left(
                            \int_{-\infty}^{\infty}
                                \frac{\text{d}}{\text{d}z}
                                \left(
                                    \overline{\rho}(\B{\Phi}_0)\abs{\bW_{0,n_r}}^2
                                \right)
						\mathrm{\,d}z\right)
	\\
	&\quad
		+ \sigma_{ij}\hat{\kappa} + \frac{1}{\gamma}
			\sum_{r=1}^{l}
			\Bigg\{
                    [\partial_{\lambda_{n_r}}\hspace{-0.7ex}\Psi]
				    \left(\lambda_{0,n_1},\dots,\lambda_{0,n_l}\right)
    \\
    &\hspace{15ex}                
                    \cdot\Bigg[
                    \int_{-\infty}^{\infty}
                    \frac{\text{d}}{\text{d}z}
								\Big(
									\overline{\mathbb{C}}(\B{\Phi}_0)
										(...)^{\text{sym}}
                                        :(...)^{\text{sym}}
								\Big)
                                \mathrm{\,d}z
	\\
	&\hspace{15ex}\quad  
                            -2 \int_{-\infty}^{\infty}
                            \frac{\text{d}}{\text{d}z}
									\left(
										\overline{\mathbb{C}}(\B{\Phi}_0)
											(...)^{\text{sym}}
											\B{\nu}\cdot\partial_{z}\bW_{1,n_r}
									\right)
                                    \mathrm{\,d}z
                    \Bigg]
			\Bigg\}
\end{aligned}
\end{alignat}
on $\Gamma_{ij}$, where $(...)^{\text{sym}}$ abbreviates $\big(\nabla_{\Gamma}\bW_{0,n_r}+\partial_{z}\bW_{1,n_r}\otimes\B{\nu}\big)^{\text{sym}}$.
Next, we want to show that
\begin{align}\label{intvan}
	\int_{-\infty}^{\infty}
		\big(
			\partial_{zz}(\partial_{z}\B{\Phi}_0)-\psi_0^{\prime\prime}(\B{\Phi}_0)\partial_{z}\B{\Phi}_0
		\big)\cdot \B{\Phi}_1\mathrm{\,d}z
		-\int_{-\infty}^{\infty}\B{\mu}_0\cdot\partial_{z}\B{\Phi}_1\mathrm{\,d}z 
	= 0.
\end{align}
Differentiating \eqref{Pheq} with respect to $z$, multiplying by $\B{\Phi}_1$ and integrating the resulting equation with respect to $z$, we deduce
\begin{align*}
        \int_{-\infty}^{\infty}
		\big(
			\partial_{zz}(\partial_{z}\B{\Phi}_0)-\psi_0^{\prime\prime}(\B{\Phi}_0)\partial_{z}\B{\Phi}_0
		\big)\cdot \B{\Phi}_1\mathrm{\,d}z=
		-\int_{-\infty}^{\infty}
		\left[\partial_z(\B{\Lambda}_0+\B{\mu}_0)\right]\cdot\B{\Phi}_1\mathrm{\,d}z.
\end{align*}
Thus, in order to prove \eqref{intvan}, it suffices to show
\begin{align}\label{intvan2}
  		\int_{-\infty}^{\infty}
		\left[\partial_z(\B{\Lambda}_0+\B{\mu}_0)\right]\cdot\B{\Phi}_1\mathrm{\,d}z+
		\int_{-\infty}^{\infty}\B{\mu}_0\cdot\partial_{z}\B{\Phi}_1\mathrm{\,d}z=0.
\end{align}
By means of the product rule, the left-hand side can be reformulated as
\begin{align*}
    \int_{-\infty}^{\infty}
		\left[\partial_z(\B{\Lambda}_0+\B{\mu}_0)\right]\cdot\B{\Phi}_1\mathrm{\,d}z
	&=
    -\int_{-\infty}^{\infty}
	(\B{\Lambda}_0+\B{\mu}_0)\cdot\partial_z\B{\Phi}_1\mathrm{\,d}z
	\\
	&\qquad+
	\int_{-\infty}^{\infty}
	\frac{\text{d}}{\text{d}z}\left[(\B{\Lambda}_0+\B{\mu}_0)\cdot\B{\Phi}_1\right]\mathrm{\,d}z.
\end{align*}
Now as $\B{\Phi}_1,\partial_z\B{\Phi}_1 \in T\Sigma^N$ pointwise, we know as in \eqref{LamPhi0}
\begin{align*}
    \B{\Lambda}_0\cdot\partial_z\B{\Phi}_1=
    \B{\Lambda}_0\cdot\B{\Phi}_1=
    0.
\end{align*}
Thus, as we want to prove \eqref{intvan2}, it remains to show
\begin{align}\label{dzmu0}
   	\int_{-\infty}^{+\infty}
	\frac{\text{d}}{\text{d}z}\left[\B{\mu}_0\cdot\B{\Phi}_1\right]\mathrm{\,d}z=0. 
\end{align}
Recalling once more formula \eqref{muphiVan0}, we infer 
\begin{align*}
    [\mu^{\eps}]^i [\varphi^{\eps}]^i =0
    % \quad\text{in $\Omega$}
\end{align*}
for all $i\in \{1,\dots,N\}$.
Hence, for the $\mathcal{O}(1)$-contribution and the $\mathcal{O}(\eps)$-contribution of the inner expansions, we obtain the relations
\begin{align}\label{muPhiO}
    \mu_0^i(z)\,\Phi^i_0(z)=0
    \quad\text{and}\quad
    \mu_1^i(z)\,\Phi^i_0(z)=-\mu_0^i(z)\,\Phi^i_1(z),
\end{align}
respectively, for all $i\in\{1,\dots,N\}$ and $z\in\R$.
Now, the first equation in \eqref{muPhiO} implies that for any $z\in\R$ with $\Phi_0^i(z)\neq 0$, we have $\mu_0^i(z)=0$ and thus also $\mu_0^i(z)\Phi_1^i(z)=0$. 
On the other, for all $z\in\R$ with $\Phi_0^i(z)= 0$, we infer from the second equation in \eqref{muPhiO} that $\mu_0^i(z)\Phi_1^i(z)=0$. 
Combining both statements, we conclude
\begin{align*}
    \mu_0^i(z)\, \Phi_1^i(z)=0
    \quad\text{for all $i\in\{1,\dots,N\}$ and $z\in\R$.}
\end{align*}
This proves \eqref{dzmu0}. By the above considerations, this verifies \eqref{intvan2} which in turn implies equation \eqref{intvan}.
 
To conclude this section, we recall the definition of the \textit{jump}, see \eqref{DEF:JUMP}. Moreover, we recall from \eqref{RepCurv} that the mean curvature of $\Gamma_{ij}$ is given by $\kappa_{ij} = - \nabla_{\Gamma_{ij}} \cdot \B{n}_{\Gamma_{ij}}$.
Using the matching conditions \eqref{dzP1}, \eqref{wzer} and \eqref{W0W1}, we finally infer from \eqref{O1InnerInt} that
\begin{align}\label{SCond}
\begin{aligned}
    \big(\vartheta_1^j-\vartheta_1^i\big)
	&= \sigma_{ij}\kappa_{ij}
		-\frac{1}{\gamma}\sum_{j=1}^{l}
        [\partial_{\lambda_{n_r}}\hspace{-0.7ex}\Psi]
			\left(\lambda_{0,n_1},\dots,\lambda_{0,n_l}\right)
					\lambda_{0,n_r}
					\left[\overline{\rho\,}\abs{\B{w}_{0,n_r}}^2\right]_i^{j}
	\\
	&\quad
		+\frac{1}{\gamma}
			\sum_{r=1}^{l}
            \Bigg\{ [\partial_{\lambda_{n_r}}\hspace{-0.7ex}\Psi]
				\left(\lambda_{0,n_1},\dots,\lambda_{0,n_l}\right)
	\\
	&\qquad\cdot 
			\Big(\big[
					\overline{\mathbb{C}}
                    \mathcal{E}(\B{w}_{0,n_r}):\mathcal{E}(\B{w}_{0,n_r})\big]_i^j
			-2
					\big[\overline{\mathbb{C}}\mathcal{E}(\B{w}_{0,n_r})\B{\nu}\cdot\nabla\B{w}_{0,n_r}\B{\nu}\big]_i^j
			\Big)
			\Bigg\}
    \end{aligned}
\end{align}
on $\Gamma_{ij}$ for all $i,j\in\{1,\dots,N-1\}$. 
In the case $j=N$ and $i\neq N$, equation \eqref{O1InnerInt} simplifies to
\begin{alignat}{2}\label{TP}
    \begin{aligned}
            \big(\vartheta_1^j-\vartheta_1^i\big)
        &=&&
            \sigma_{ij}\kappa_{ij}
            +\frac{1}{\gamma}\sum_{r=1}^{l}[\partial_{\lambda_{n_r}}\hspace{-0.7ex}\Psi]
            \left(\lambda_{0,n_1},\dots,\lambda_{0,n_l}\right)
                \lambda_{0,n_r}
                \; \rho^i\abs{(\B{w}_{0,n_r})_i}^2\\
        &\phantom{=}&&
            -\frac{1}{\gamma}
            \sum_{r=1}^{l}[\partial_{\lambda_{n_r}}\hspace{-0.7ex}\Psi]
                \left(\lambda_{0,n_1},\dots,\lambda_{0,n_l}\right)
                    \mathbb{C}^i\mathcal{E}_i(\B{w}_{0,n_r}):\mathcal{E}_i(\B{w}_{0,n_r})
    \end{aligned}
\end{alignat}
on $\Gamma_{iN}$ by the matching in \eqref{CVaneN} and \eqref{CEnu}.

To conclude this section we note that according to \cite[Section 5.3]{Blank} or \cite[Section 2.4]{Bronsard} equation \eqref{Pheq} induces a further solvability condition, namely an angle condition for triple junctions. To see this, let us assume that the regions $\Omega_i$, $\Omega_j$, $\Omega_k$ (with $i,j,k$ pairwise different) meet at a triple point $m_{ijk}$ in the 2-dimensional case or on a triple curve
$m_{ijk}$ in the 3-dimensional case.
Then the angle condition is expressed via the normals of the three meeting interfaces as follows
\begin{align}\label{Angle}
	\sigma_{ij}\B{n}_{\Gamma_{ij}}+\sigma_{jk}\B{n}_{\Gamma_{jk}}+\sigma_{ki}\B{n}_{\Gamma_{ki}}=0\quad\text{in }m_{ijk}.
\end{align}
This relation immediately implies the angle condition
\begin{align*}
	\frac{\sin(\theta_{ij})}{\sigma_{ij}}=
	\frac{\sin(\theta_{jk})}{\sigma_{jk}}=
	\frac{\sin(\theta_{ki})}{\sigma_{ki}},
\end{align*}
where $\theta_{ij}$ denotes the angle between $\B{n}_{\Gamma_{jk}}$ and $\B{n}_{\Gamma_{ki}}$.
For the choice $\psi_0(\bphi)=\frac{1}{2}(1-\bphi\cdot\bphi)$ we deduce that the transition energies denoted by $\sigma_{ij},\sigma_{jk},\sigma_{ki}$ are always equal. This follows by exploiting the symmetry of this potential in \eqref{DEF:SIGMAIJ}. Thus, in this case, we know that triple junctions always occur at a $120^\circ$ contact angle.

%%%%%%%%%%%%%%%%%%%%%%%%%%%%%%%%%%%%
%%%%%%%%%%%%LIMIT PROBLEM%%%%%%%%%%%
%%%%%%%%%%%%%%%%%%%%%%%%%%%%%%%%%%%%

\section{The sharp-interface problem}\label{SEC:limitpr}
Now we are in a position to state the complete problem that is obtained from \eqref{state} and \eqref{GIp} in the sharp-interface situation.

\subsection{The sharp-interface limit of the state equation}
Therefore, we recall that the domain $\Omega$ is partitioned into $N$ regions $\Omega_i$ for $i=1,\dots,N$ representing the presence of the $i$-th material ($i<N$) or void ($i=N$) in its pure form. Those regions are separated by interfaces $\Gamma_{ij}$. Furthermore we have chosen $\B{\eta}_{\Gamma_{ij}}$ to be the unit normal vector field on $\Gamma_{ij}$ pointing from $\Omega_i$ into the region $\Omega_j$.
This means that
\begin{align*}
    x+\delta\B{\eta}_{\Gamma_{ij}}(x) \in \Omega_j
    \quad\text{and}\quad
    x-\delta\B{\eta}_{\Gamma_{ij}}(x)\in \Omega_i
    \quad\text{$x\in \Gamma_{ij}$ and $\delta>0$}.
\end{align*}
To capture the behavior of a function $\B{v}$ across the interface $\Gamma_{ij}$, we defined its \textit{jump} by
\begin{align*}
    [\B{v}]_i^j(x) \coloneqq \underset{\delta\searrow 0}{\lim}\; 
        \Big(
            \B{v}\big(x+\delta\B{\eta}_{\Gamma_{ij}}(x) \big)
            - \B{v}\big(x-\delta\B{\eta}_{\Gamma_{ij}}(x) \big) 
        \Big),
\end{align*}
for all $x\in\Gamma_{ij}$, see \eqref{DEF:JUMP}.

Combining the equations \eqref{state0} derived in Claim~\ref{Thm:Lam-1} and the jump conditions obtained in \eqref{wzer} and \eqref{CEnu}, we obtain the system
\begin{align}\tag{${SE}^{ij}_r$}\label{SeLim}
	\begin{cases}
		\begin{array}{rll}
				-\nabla\cdot\big(\mathbb{C}^{i}\mathcal{E}(\B{w}_{0,n_r})\big)
			&=
				\lambda_{0,n_r}
				\rho^i\B{w}_{0,n_r}
			&\quad
				\text{in }\Omega_i,
			\\[1ex]
			\left[\mathbb{C}\mathcal{E}(\B{w}_{0,n_r}) \B{n}_{\Gamma_{ij}} \right]_i^j
			&=
			\B{0}
			&\quad \text{on }\Gamma_{ij},
			\\[1ex]
    				[\B{w}_{0,n_r}]_i^j
			&=
				\B{0}
			&\quad \text{on }\Gamma_{ij},
			\\[1ex]
			    \mathbb{C}^i\mathcal{E}_i(\B{w}_{0,n_r}) \B{n}_{\Gamma_{iN}}
			&=
			    \B{0}
			&\quad
			    \text{on }\Gamma_{iN},
			\\[1ex]
					\mathbb{C}^{i}\mathcal{E}(\B{w}_{0,n_r}) \B{n}
			&=
				\B{0}
			&\quad
				\text{on }\Gamma_0\cap \partial\Omega_i,
			\\[1ex]
				\B{w}_{0,n_r}
			&=
				\B{0}
			&\quad
				\text{on }\Gamma_D\cap \partial\Omega_i,
		\end{array}
	\end{cases}
\end{align}
for $i,j=1,\dots,N-1$ and $r=1,\dots,l$, as the \textit{sharp-interface limit of the state equation}~\eqref{state}.
Here, $\bw_{0,n_r}$ is normalized in the material regions, i.e.,
\begin{align}\label{wFinalNorm}
    1
    =\sum_{i=1}^{N-1}\int_{\Omega_i}\rho^i\abs{\bw_{0,n_r}}^2\mathrm{\,d}x.
\end{align}
Furthermore, we infer from \eqref{CEnu} that
\begin{align}\label{NeuVoid}
    [\bw_{0,n_r}]_i^N=\B{0}
    \quad\text{on } \Gamma_{iN}
\end{align}
for all $i\in\{1,\dots,N-1\}$ and each $r\in\{1,\dots,l\}$. However, this condition does not provide any additional information as we do not know how $\bw_{0,n_r}$ behaves in the void region. 
In particular, we see that by interpreting \eqref{SeLim} as one system of PDEs in the material region $\bigcup_{i=1}^{N-1}\Omega_i$, the homogeneous Neumann boundary condition in the fourth line of \eqref{SeLim} is enough to obtain a closed system.

Combining the Neumann type jump condition on $\Gamma_{ij}$ stated in the second line of \eqref{SeLim} with the normality condition \eqref{wFinalNorm}, we are able to obtain the relation 
\begin{align}
    \label{EQ:SIEIG}
        \int_{\Omega^M} \C^M \, \E(\bw_{0,n_r}):\E(\bw_{0,n_r})\mathrm{\,d}x = \lambda_{0,n_r},
\end{align}
with
\begin{align*}
    \Omega^M := \bigcup_{i=1}^{N-1}\Omega_i
    \quad\text{and}\quad
    \C^M := \left( \sum_{i=1}^{N-1} \C^i\; \mathds{1}_{\Omega_i} \right),
\end{align*}
where $\mathds{1}_{\Omega_i}$ denotes the characteristic function on $\Omega_i$.
This means that the eigenvalue $\lambda_{0,n_r}$ in the sharp-interface setting is indeed solely determined by an eigenvalue equation on the material region $\Omega^M$ but does not have any contribution from the void region.

To verify \eqref{EQ:SIEIG}, 
we test \eqref{SeLim} with $\bw_{0,n_r}$ and integrate by parts. This yields
\begin{align}
    \label{EQ:OMI}
    \begin{aligned}
    &\int_{\Omega_i}\C^i\E(\bw_{0,n_r}):\E(\bw_{0,n_r})\mathrm{\,d}x-
    \int_{\partial\Omega_i}\C^i\E(\bw_{0,n_r})\B{n}_{\Gamma_i} 
        \cdot \bw_{0,n_r}\mathrm{\,d}\Gamma=
    \lambda_{0,n_r}
    \\
    &\quad =\int_{\Omega_i}\rho^i\abs{\bw_{0,n_r}}^2\mathrm{\,d}x,
    \end{aligned}
\end{align}
for all $i\in\{1,\dots,N-1\}$, where $\B{n}_{\Gamma_i}$ stands for the outer unit normal vector field of $\partial\Omega_i$. Noticing that the outer unit normal vector simply switches its sign on neighboring boundaries, we now use the second and the fourth line of \eqref{SeLim} to infer
\begin{align*}
    \sum_{i=1}^{N-1}\int_{\partial\Omega_i}\C^i\E(\bw_{0,n_r})\B{n}_{\Gamma_i}\cdot \bw_{0,n_r}\mathrm{\,d}\Gamma=0.
\end{align*}
Thus, summing the equations \eqref{EQ:OMI} from $i=1$ to $N-1$ and using property \eqref{wFinalNorm}, we conclude
\begin{align*}
    \sum_{i=1}^{N-1}    \int_{\Omega_i}\C^i\E(\bw_{0,n_r}):\E(\bw_{0,n_r})\mathrm{\,d}x=
    \lambda_{0,n_r}.
\end{align*}
By the linearity of the integral, this directly proves \eqref{EQ:SIEIG}.

\begin{Rem}\label{REM:Korn2}
    As a refinement of Remark~\ref{REM:Korn}~\ref{KornA}, we now see that as long as at least one of the material regions $\Omega_{1},\dots, \Omega_{N-1}$ shares a sufficiently nice part of its boundary with $\Gamma_D$, we can apply Korn's inequality in order to deduce that all $\lambda_{0,n_r}$ are strictly positive. 
    From a physical point of view, this is reasonable since if the material region $\Omega^M$ of the structure is not attached to some fixed boundary the shape can freely move within the design domain just by translation without exhibiting any vibrations.
\end{Rem}

\subsection{The sharp-interface limit of the first-order optimality condition}

Now let us turn to the limit of the gradient inequality \eqref{GIp}. 
For the sake of completeness, let us restate our final results from the previous section, i.e., \eqref{SCond} and \eqref{TP}. We have
\begin{alignat}{2}\label{GIsia}
    &0
		= \gamma \sigma_{ij}\kappa_{ij}
		-\sum_{j=1}^{l}
        [\partial_{\lambda_{n_r}}\hspace{-0.7ex}\Psi]
			\left(\lambda_{0,n_1},\dots,\lambda_{0,n_l}\right)
					\lambda_{0,n_r}
					\left[\overline{\rho\,}\abs{\B{w}_{0,n_r}}^2\right]_i^{j}
		+ \gamma \big(\vartheta_1^i-\vartheta_1^j\big)
	\notag\\[-1ex]
	&\qquad\quad
	\begin{aligned}
		+\sum_{r=1}^{l}
            &\Big\{ [\partial_{\lambda_{n_r}}\hspace{-0.7ex}\Psi]
				\left(\lambda_{0,n_1},\dots,\lambda_{0,n_l}\right)
		\\
			&\quad\cdot 
			\Big(\big[
					\overline{\mathbb{C}}
                    \mathcal{E}(\B{w}_{0,n_r}):\mathcal{E}(\B{w}_{0,n_r})\big]_i^j
			-2
					\big[\overline{\mathbb{C}}\mathcal{E}(\B{w}_{0,n_r})\B{\nu}\cdot\nabla\B{w}_{0,n_r}\B{\nu}\big]_i^j
			\Big)
			\Big\}
    \end{aligned}
\end{alignat}
on $\Gamma_{ij}$ for all $i,j=1\dots,N-1$, and 
\begin{alignat}{2}\label{GIsia:N}
    \begin{aligned}
        0
        &= \gamma \sigma_{iN}\kappa_{iN}
            +\sum_{r=1}^{l}[\partial_{\lambda_{n_r}}\hspace{-0.7ex}\Psi]
                \left(\lambda_{0,n_1},\dots,\lambda_{0,n_l}\right)
                \lambda_{0,n_r}
                \; \rho^i\abs{(\B{w}_{0,n_r})_i}^2
            + \gamma \big(\vartheta_1^i-\vartheta_1^N\big)
        \\
        &\quad
            -\sum_{r=1}^{l}[\partial_{\lambda_{n_r}}\hspace{-0.7ex}\Psi]
                \left(\lambda_{0,n_1},\dots,\lambda_{0,n_l}\right)
                    \mathbb{C}^i\mathcal{E}_i(\B{w}_{0,n_r}):\mathcal{E}_i(\B{w}_{0,n_r})
    \end{aligned}
\end{alignat}
on $\Gamma_{iN}$ for all $i=1\dots,N-1$ if $j=N$.
\pagebreak[2]

Here $\sigma_{ij}$ is defined as in \eqref{DEF:SIGMAIJ} and stands for the total energy of a transition across the interface $\Gamma_{ij}$.
The vector $\B{\vartheta}_1\in \R^N$ denotes the $\mathcal{O}(\eps)$-contribution of the Lagrange-multiplier 
resulting from the integral constraint $\fint_{\Omega} \bphi^\eps \mathrm{\,d}x=\B{m}$ 
that is hidden in the condition $\bphi^\eps\in \B{\mathcal{G}}^{\B{m}}$ (cf.~Theorem~\ref{Thm:Lagr}).\\

Recalling \eqref{Angle}, we additionally have the triple junction condition at any junction 
%point
$m_{ijk}$ with pairwise disjoint $i,j,k\{1,\dots,N\}$
\begin{align*}
	\sigma_{ij}\B{n}_{\Gamma_{ij}}+\sigma_{jk}\B{n}_{\Gamma_{jk}}+\sigma_{ki}\B{n}_{\Gamma_{ki}}=0\quad\text{in }m_{ijk}.
\end{align*}

\subsection{The sharp-interface optimality system in the case of only one material} \label{SEC:OS}
We now want to state above equations for the simplest case of only one single material (i.e., $N=2$) as this is the scenario we further study in the subsequent sections. 

In this case, we have $\Omega=\Omega^{M}\cup \Omega^{V}$, where $\Omega^{M}$ and $\Omega^{V}$ denote the material and the void parts of the domain, respectively. We now denote the interface separating the two phases by $\Gamma_{MV}$, its outer unit normal vector field by $\B{n}_{\Gamma_{MV}}$ and its mean curvature by $\kappa_{MV} = - \nabla_{\Gamma_{MV}}\cdot \B{n}_{\Gamma_{MV}}$. Using the notation $\Gamma_D^{M}\coloneqq \Gamma_D\cap \partial\Omega^M$ and $\Gamma_0^{M}\coloneqq \Gamma_0\cap \partial\Omega^M$, we obtain from \eqref{SeLim}, \eqref{wFinalNorm} and \eqref{NeuVoid} the state equation
\begin{align}\tag{${SE}^{MV}_r$}\label{SErMV}
	\begin{cases}
		\begin{array}{rll}
				-\nabla\cdot\left(\mathbb{C}^{M}\mathcal{E}(\B{w}_{0,n_r})\right)
			&=
				\lambda_{0,n_r}
				\rho^M\B{w}_{0,n_r}
			&\quad
				\text{in }\Omega^M,\\[1ex]
				\mathbb{C}^M\mathcal{E}_M(\B{w}_{0,n_r})\; \B{n}_{\Gamma_{MV}}
			&=
				\B{0}
			&\quad \text{on }\Gamma_{MV},\\[1ex]
				\B{w}_{0,n_r}
			&=
				\B{0}
			&\quad
				\text{on }\Gamma_D^M,\\[1ex]
					\mathbb{C}^{M}\mathcal{E}(\B{w}_{0,n_r})\; \B{n}
			&=
				\B{0}
			&\quad
				\text{on }\Gamma_0^M,
		\end{array}
	\end{cases}
\end{align}
for $r=1,\dots,l$, along with the first-order necessary optimality condition
\begin{alignat}{2}\label{GMV}
	\begin{aligned}
			0
		&=
			\gamma\,\sigma_{MV}\,\kappa_{MV}
			+\sum_{r=1}^{l}[\partial_{\lambda_{n_r}}\hspace{-0.7ex}\Psi]
				\left(\lambda_{0,{n_1}},\dots,\lambda_{0,{n_l}}\right)
				\lambda_0^{n_r}
				\rho^M\abs{(\B{w}_{0,n_r})_M}^2\\
		&\quad
			-\sum_{r=1}^{l}[\partial_{\lambda_{n_r}}\hspace{-0.7ex}\Psi]
			\left(\lambda_{0,n_1},\dots,\lambda_{0,n_l}\right)
			\mathbb{C}^M\mathcal{E}_M(\B{w}_{0,n_r}):\mathcal{E}_M(\B{w}_{0,n_r})
			+\gamma\left(\vartheta_1^1-\vartheta_1^2\right)
% 			\quad\text{on }\Gamma_{MV}.
	\end{aligned}
\end{alignat}
on $\Gamma_{MV}$.
This means that the functions $\B{w}_{0,n_r}$ are eigenfunctions to the eigenvalues $\lambda_{0,n_r}$ which essentially solve the eigenvalue problem for the elasticity equation subject to a homogeneous Neumann boundary condition on the shape $\Omega^M$. 

\begin{Rem}\label{Rem:Smooth}
    Note that, in general, one cannot predict the behavior of solutions to \eqref{SErMV}. If $\Omega^M$ is merely a set of finite perimeter that does not have a Lipschitz boundary or if $\Gamma_{MV}\cap\Gamma_D^M=\emptyset$, the classical spectral theory (as applied in Section~\ref{SUB:Se}) does not provide us with an infinite sequence of positive eigenvalues. Nevertheless, as we want to consider a well posed minimization problem and want to calculate shape derivatives associated to this problem, we assume that these issues do not occur. In particular, we  always assume $\Omega^M$ to be sufficiently smooth and $\partial\Omega^M$ to have a suitably nice intersection with $\Gamma^M_D$ such that an infinite sequence of positive eigenvalues actually exists (see also Remark~\ref{REM:Korn2}). 
\end{Rem}

%%%%%%%%%%%%%%%%%%%%%%%%%%%%%%%%%%%
%%%%%%%%%RELATING SHAPE CALC%%%%%%%
%%%%%%%%%%%%%%%%%%%%%%%%%%%%%%%%%%%

\section[Relating the first-order optimality condition to classical shape calculus]{Relating the first-order optimality condition\\ to classical shape calculus}\label{SEC:Rel}
We now want to compare the above results, especially \eqref{GMV}, to the results in \cite{Allaire}, which were obtained using shape calculus. Our goal is to justify that the gradient equality \eqref{GMV} is indeed the first-order condition of a sharp-interface eigenvalue optimization problem, which is formally the limit of the diffuse-interface problem we started with.
Therefore, we need to fit the notation of \cite{Allaire} to our setting.

As above consider the situation $N=2$, i.e., $\Omega=\Omega^M\cup \Omega^V$. Denote with $P_{\Omega}(\Omega^M)$ the perimeter of the shape $\Omega^M$ \textit{within} the design domain $\Omega$, which is given by the Hausdorff measure $\mathcal{H}^{d-1}(\partial\Omega^M\cap\Omega)$ provided that $\Omega^M$ is non-empty and sufficiently smooth. Furthermore, we consider a prescribed mass $m=\big|\Omega^M\big|<|\Omega|$. 
In order to be consistent with the notation used in the previous chapters, we choose $\B{m}=(m_1,m_2)^T\in\Sigma^2$ with $m_1 = m|\Omega|^{-1}$ and $m_2=1-m_1$.
Then the sharp-interface structural optimization problem that we intend to approximate via our diffuse-interface problem \eqref{Pepsla} reads as
\begin{alignat}{2}\tag{$\mathcal{P}_l^0$}\label{Pz}
	\begin{cases}
		\begin{aligned}
		 	&\min&&
		 		 J(\Omega^M)\coloneqq \Psi(\lambda_{n_1},\dots,\lambda_{n_l})+\gamma\, \sigma_{MV}\, P_{\Omega}(\Omega^M),
		    \\[1ex]
		 	&\text{over}&&
		 			\mathcal{U}^{\text{ad}}
		 		=
		 			\left\{
		 				\Omega^M\subset \Omega :\big| \Omega^M \big| = m
		 			\right\},
		 	\\[1ex]
		 	&\text{s.t.}&&
		 		\eqref{SErMV}
		 			\begin{cases}
		 				\begin{array}{rll}
		 						-\nabla\cdot\left(\mathbb{C}^{M}\mathcal{E}(\B{w}_{n_r})\right)
		 					&=
		 						\lambda_{n_r}
		 						\rho^M\B{w}_{n_r}
		 					&\quad
		 						\text{in }\Omega^M,
		 						\\[1ex]
		 						\mathbb{C}^M\mathcal{E}_M(\B{w}_{n_r})\; \B{n}_{\Gamma_{MV}}
		 					&=
		 						\B{0}
		 					&\quad \text{on }\Gamma_{MV},
		 					\\[1ex]
		 							\mathbb{C}^{M}\mathcal{E}(\B{w}_{n_r})\; \B{n}
		 					&=
		 						\B{0}
		 					&\quad
		 						\text{on }\Gamma_0^M,
		 					\\[1ex]
		 						\B{w}_{n_r}
		 					&=
		 						\B{0}
		 					&\quad
		 						\text{on }\Gamma_D^M,
		 				\end{array}
		 			\end{cases}
		 	\\[1ex]
            &&& \text{for all}\; r\in\{1,\dots,l\}.     
		 \end{aligned}
	\end{cases}	 
\end{alignat}
This system is the sharp-interface limit problem associated to the diffuse-interface problem \eqref{Pepsla}, where the side condition is exactly the sharp-interface state equation \eqref{SErMV} and the perimeter $\sigma_{MV}P_{\Omega}(\Omega^M)$ is the rigorous $\Gamma$-limit of the Ginzburg--Landau energy, see \cite{Baldo}. We recall that the constant $\sigma_{MV}$ we obtained in \eqref{DEF:SIGMAIJ} is exactly the one obtained in \cite{Baldo} in terms of the rigorous $\Gamma$-limit, which is denoted by $d(\B{e}_i,\B{e}_j)$ there. In particular, $\sigma_{MV}$ is independent of the shape $\Omega^M$.

In case an ambiguity might arise, we indicate the shape dependency explicitly in the eigenfunctions and eigenvalues, i.e., we write $(\lambda_{n_r}(\Omega^M),\bw_{n_r}(\Omega^M))$ for $r=1,\dots,l$. 
Now, we want to apply the calculus of shape derivatives from \cite[Thm. 2.5]{Allaire} to our situation. We obtain the following statement.

\begin{Thm}
	Let $\Omega^M$ be a smooth bounded open set and let $\B{\theta}\in W^{1,\infty}(\mathbb{R}^d,\mathbb{R}^d)$ with ${\B{\theta}\cdot\B{n}_{\Gamma_{\partial\Omega^M}}=0}$ on $\partial\Omega^M\backslash\Gamma_{MV}$.
    We further assume that for $r=1,\dots,l$, the eigenfunctions $\B{w}_{n_r}(\Omega^M)$ in $({SE}^{MV}_r)$ are sufficiently smooth,
    say $\B{w}_{n_r}(\Omega^M)\in H^2(\Omega^M;\mathbb{R}^d)$.
      
	Then, if the involved eigenvalues $\lambda_{n_r}$ for $r=1,\dots,l$ are all simple, the shape derivative of $J$ at the shape $\Omega^M$ in the direction $\B{\theta}$ fulfills the equation
	\begin{alignat}{2}\label{WShape}
		\begin{aligned}
			J^{\prime}(\Omega^M)(\B{\theta})
			&= \sum_{r=1}^{l}\Bigg\{ [\partial_{\lambda_{n_r}}\hspace{-0.7ex}\Psi](\lambda_{n_1}(\Omega^M),\dots,\lambda_{n_l}(\Omega^{M}))
			\\[-1ex]
			&\qquad\qquad \cdot\Bigg[
						\int_{\Gamma_{MV}}
				    	\mathbb{C}^M\mathcal{E}(\B{w}_{n_r}(\Omega^M)):\mathcal{E}(\B{w}_{n_r}(\Omega^M))
							\B{\theta}\cdot\B{n}_{\Gamma_{MV}}
						\textup{\,d}\HH^{d-1}\;
			\\[-1ex]
			&\qquad\qquad\qquad       
						- \lambda_{n_r}(\Omega^M)
							\int_{\Gamma_{MV}}
								\rho^M\big|\B{w}_{n_r}(\Omega^M)\big|^2
								\B{\theta}\cdot\B{n}_{\Gamma_{MV}}
						\textup{\,d}\HH^{d-1}
			\Bigg]
		    \Bigg\}
		    \\
			&\qquad -\int_{\Gamma_{MV}}
					\gamma\sigma_{MV}\,\kappa_{MV}\,
				\B{\theta}\cdot\B{n}_{\Gamma_{MV}}
		    \textup{\,d}\HH^{d-1}.
		\end{aligned}
	\end{alignat}
	Here, the shape derivative of $J$ at a shape $\Omega^M$ is defined as the Fréchet derivative of the functional
	\begin{align*}
	    W^{1,\infty}(\R^d;\R^d)\to \R, \quad
	    \B{\zeta}\mapsto J\big((\mathrm{Id}+\B{\zeta})\Omega^M\big)
	\end{align*}
	evaluated at $\B{\zeta}=\B{0}$.
\end{Thm}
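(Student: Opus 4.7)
The overall plan is to decompose $J$ as $\Psi(\lambda_{n_1},\dots,\lambda_{n_l}) + \gamma\sigma_{MV}P_\Omega(\Omega^M)$ and then compute the shape derivative of each summand separately. Since $\sigma_{MV}$ is a constant (independent of $\Omega^M$ by \eqref{DEF:SIGMAIJ}) and $\Psi \in C^1$ by assumption \ref{ASS:A:3}, the chain rule yields
\begin{equation*}
    J^{\prime}(\Omega^M)(\B{\theta})
    = \sum_{r=1}^{l}[\partial_{\lambda_{n_r}}\hspace{-0.5ex}\Psi]\big(\lambda_{n_1}(\Omega^M),\dots,\lambda_{n_l}(\Omega^M)\big)\, \lambda_{n_r}'(\Omega^M)(\B{\theta})
    + \gamma\sigma_{MV}\, P_\Omega'(\Omega^M)(\B{\theta}),
\end{equation*}
provided each constituent functional is shape-differentiable at $\Omega^M$ in the direction $\B{\theta}$. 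The condition $\B{\theta}\cdot\B{n}=0$ on $\partial\Omega^M\setminus\Gamma_{MV}$ guarantees that the perturbed shape $(\mathrm{Id}+t\B{\theta})\Omega^M$ remains admissible (i.e., contained in $\overline{\Omega}$ with the Dirichlet portion preserved) for small $t$, so that the shape derivatives make sense.

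For the perimeter term, since $\Omega^M$ is smooth, I would invoke the classical Hadamard formula for the area of a smooth hypersurface (see, e.g., \cite{Sokolowski, Delfour1}) to obtain
\begin{equation*}
    P_\Omega'(\Omega^M)(\B{\theta})
    = -\int_{\Gamma_{MV}}\kappa_{MV}\,\B{\theta}\cdot \B{n}_{\Gamma_{MV}}\,\mathrm{d}\HH^{d-1},
\end{equation*}
where the minus sign is a consequence of our sign convention $\kappa_{MV}=-\nabla_{\Gamma_{MV}}\cdot\B{n}_{\Gamma_{MV}}$, cf.~\eqref{RepCurv}. The hypothesis $\B{\theta}\cdot\B{n}=0$ on $\partial\Omega^M\setminus\Gamma_{MV}$ eliminates boundary contributions along $\Gamma_D^M\cup \Gamma_0^M$, so only the free interface $\Gamma_{MV}$ enters the formula.

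For each eigenvalue $\lambda_{n_r}(\Omega^M)$, the strategy is to use the Hadamard-type formula for Neumann eigenvalues, which is exactly the content of \cite[Thm.~2.5]{Allaire} once adapted to the elasticity system \eqref{SErMV}. The simplicity assumption is crucial here: it guarantees that $\lambda_{n_r}(\Omega^M)$ and a suitably normalized eigenfunction $\B{w}_{n_r}(\Omega^M)$ depend differentiably on the shape (for multiple eigenvalues one only gets one-sided directional derivatives, branching across the cluster). The derivation itself follows the standard Lagrangian/material-derivative scheme: pull back \eqref{SErMV} and the normalization $(\B{w}_{n_r},\B{w}_{n_r})_{\rho^M}=1$ from the perturbed domain $(\mathrm{Id}+t\B{\theta})\Omega^M$ to the fixed domain $\Omega^M$ via the change of variables induced by $\mathrm{Id}+t\B{\theta}$, differentiate in $t$ at $t=0$, eliminate the material derivative of $\B{w}_{n_r}$ by testing the eigenvalue equation against it, and apply the tangential divergence theorem (Stokes' formula on $\Gamma_{MV}$) together with the Neumann boundary condition $\C^M\E(\B{w}_{n_r})\B{n}_{\Gamma_{MV}}=\B{0}$. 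The resulting expression reduces to a pure boundary integral over $\Gamma_{MV}$:
\begin{equation*}
    \lambda_{n_r}'(\Omega^M)(\B{\theta})
    =\int_{\Gamma_{MV}}\!\!\Big(\mathbb{C}^M\mathcal{E}(\B{w}_{n_r}):\mathcal{E}(\B{w}_{n_r}) - \lambda_{n_r}\rho^M|\B{w}_{n_r}|^2\Big)\B{\theta}\cdot\B{n}_{\Gamma_{MV}}\,\mathrm{d}\HH^{d-1},
\end{equation*}
where the $H^2$-regularity of $\B{w}_{n_r}(\Omega^M)$ assumed in the statement is what legitimizes the application of the trace and integration-by-parts arguments on $\Gamma_{MV}$.

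The main technical obstacle I anticipate is justifying the shape differentiability of the map $\Omega^M \mapsto \lambda_{n_r}(\Omega^M)$ rigorously under the simplicity and $H^2$-regularity assumptions: one must verify that the family of pulled-back bilinear forms depends smoothly on $t$ and that the eigenvalue, isolated by simplicity, can be tracked via an implicit function argument in a suitable function space. Once this is established, combining the eigenvalue formula and the perimeter formula via the chain rule yields precisely \eqref{WShape}.
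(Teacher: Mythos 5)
Your proposal is correct and takes essentially the same route as the paper: both are the classical shape-calculus computation following Allaire, relying on simplicity of the eigenvalues for shape differentiability, on the Hadamard formula for the perimeter with the sign convention $\kappa_{MV}=-\nabla_{\Gamma_{MV}}\cdot\B{n}_{\Gamma_{MV}}$, and on the hypothesis $\B{\theta}\cdot\B{n}=0$ on $\partial\Omega^M\setminus\Gamma_{MV}$ to discard all boundary contributions except those on the free interface. The only difference is packaging: the paper forms a single Lagrangian in which the eigenvalues appear as Rayleigh quotients and exploits its stationarity in the eigenfunctions before applying the shape-derivative formulas of Allaire's Lemma 2.3, whereas you differentiate each $\lambda_{n_r}$ directly via the Hadamard eigenvalue formula — but your elimination of the material derivative of $\B{w}_{n_r}$ by testing against the state equation is exactly that stationarity argument in disguise.
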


\begin{Rem}\label{REM:DifShape}
    \begin{enumerate}[label = (\alph*), leftmargin=*]
        \item Note that the simplicity of eigenvalues is crucial here. Only then it is guaranteed that the eigenvalues and eigenfunctions depend on the domain $\Omega^M$ in a differentiable way. For a comprehensive overview over the differentiablitiy of spectral quantities with repsect to the domain we refer to \cite[Section 5.7]{HenrotPierre}.
        \item For $\B{\zeta}\in W^{1,\infty}(\R^d;\R^d)$ the application
                \begin{align*}
                    T_{\B{\zeta}}:\R^d\to \R^d, \quad
                    x \mapsto (\mathrm{Id}+\B{\zeta})(x),
                \end{align*}
                is invertible if $\norm{\B{\zeta}}_{W^{1,\infty}}<1$, and it holds $(\mathrm{Id}+\B{\zeta})^{-1}-\mathrm{Id}\in W^{1,\infty}(\R^d;\R^d)$ with 
                \begin{align*}
                    \norm{(\mathrm{Id}+\B{\zeta})^{-1}-\mathrm{Id}}_{W^{1,\infty}}\le
                    \norm{\B{\zeta}}_{W^{1,\infty}}(1-\norm{\B{\zeta}}_{W^{1,\infty}})^{-1}.
                \end{align*}
                This means the family $(T_{\B{\zeta}})_{\B{\zeta}\in W^{1,\infty}}$ describes diffeomorphic perturbations of $\Omega^M$ ``close'' to $\Omega^M$ if $\norm{\B{\zeta}}_{W^{1,\infty}}$ is small, motivating the definition of the shape derivative above. For a detailed discussion of this concept, we refer to \cite[Section 5.2]{HenrotPierre}.
    \end{enumerate}
    
\end{Rem}
\begin{proof}
    We proceed analogously to \cite[Theorem 2.5]{Allaire}.
    In the following, $\Omega_{\B{\zeta}}=(\mathrm{Id}+\B{\zeta})(\Omega^M)$ denotes the perturbation of $\Omega^M$ associated with a sufficiently small $\B{\zeta}\in W^{1,\infty}(\R^d;\R^d)$.
	First of all, for $\B{v}_{n_r}\in H^1(\R^d;\R^d)$ with $r=1,\dots,l$, we introduce the Lagrangian
	\begin{alignat*}{2}
		&\mathcal{L}(\Omega_{\B{\zeta}},\B{v}_{n_1},\dots,\B{v}_{n_l})
		\\[1ex]
		&\quad=
		\Psi
		\left(
		\frac{
			\int_{\Omega_{\B{\zeta}}}
                \mathbb{C}^M\mathcal{E}({\B{v}_{n_1}}):\mathcal{E}({\B{v}_{n_1}}) 
            \mathrm{\,d}x		
		}{\int_{\Omega_{\B{\zeta}}}\rho^M\abs{\B{v}_{n_1}}^2\mathrm{\,d}x},\dots,
		\frac{
			\int_{\Omega_{\B{\zeta}}}
                \mathbb{C}^M\mathcal{E}({\B{v}_{n_l}}):\mathcal{E}({\B{v}_{n_l}}) 
            \mathrm{\,d}x		
		}{\int_{\Omega_{\B{\zeta}}}\rho^M\abs{\B{v}_{n_l}}^2\mathrm{\,d}x}	
		\right)
		\\[1ex]
		&\qquad\quad +\gamma\sigma_{MV} P(\Omega_{\B{\zeta}})\mathrm{\,d}s.
	\end{alignat*}

For the partial Fréchet derivatives of the Lagrangian with respect to $\B{v}_{n_r}$ for $r=1,\dots,l$ at the point $(\Omega_{\B{\zeta}},\B{w}_{n_1}(\Omega_{\B{\zeta}}),\dots, \B{w}_{n_l}(\Omega_{\B{\zeta}}))$, we obtain
\begin{align}\label{ParLVan}
    \partial_{\B{v}_{n_r}} \mathcal{L}\big(\Omega_{\B{\zeta}},\B{w}_{n_1}(\Omega_{\B{\zeta}}),\dots, \B{w}_{n_l}(\Omega_{\B{\zeta}})\big)=0.
\end{align}
This is simply due to the fact, that the derivative of the Rayleigh quotient 
\begin{align*}
    \mathcal{R}_{\B{\zeta}}: H^1(\R^d;\R^d)\to \R, \quad
    \B{v}\mapsto \frac{
			\int_{\Omega_{\B{\zeta}}}
                \mathbb{C}^M\mathcal{E}({\B{v}}):\mathcal{E}({\B{v}}) 
            \mathrm{\,d}x		
		}{\int_{\Omega_{\B{\zeta}}}\rho^M\abs{\B{v}}^2\mathrm{\,d}x},
\end{align*}
evaluated at an eigenfunction $\B{w}_n=\B{w}_n(\Omega_{\B{\zeta}})$ reads as
\begin{align*}
    \mathcal{R}^{\prime}_{\B{\zeta}}(\bw_n)\B{v}
    &=\frac{
        2\int_{\Omega_{\B{\zeta}}}\C^M\E(\B{w}_n):\E(\B{v})\mathrm{\,d}x
        \int_{\Omega_{\B{\zeta}}}\rho^M\abs{\B{w}_n}^2\mathrm{\,d}x}
        {(\int_{\Omega_{\B{\zeta}}}\rho^M\abs{\B{w}_n}^2\mathrm{\,d}x)^2}\\
    &\phantom{=}-\frac{    
        2\int_{\Omega_{\B{\zeta}}}\C^M\E(\B{w}_n):\E(\B{w}_n)\mathrm{\,d}x
        \int_{\Omega_{\B{\zeta}}}\rho^M\B{w}_n\cdot\B{v}\mathrm{\,d}x
    }{(\int_{\Omega_{\B{\zeta}}}\rho^M\abs{\B{w}_n}^2\mathrm{\,d}x)^2}
\end{align*}
and this vanishes due to \eqref{SErMV}.
% \pagebreak[2]

On the other hand, recalling the definition of $J$ in \eqref{Pz}, we obviously have
\begin{align*}
    J(\Omega_{\B{\zeta}})=\mathcal{L}(\Omega_{\B{\zeta}},\B{w}_{n_1}(\Omega_{\B{\zeta}}),\dots,\B{w}_{n_l}(\Omega_{\B{\zeta}}))
\end{align*}
as the eigenvalues can be expressed by the corresponding Rayleigh quotients.
Note that due to the differentiability of eigenfunctions as discussed in Remark~\ref{REM:DifShape}, we can now apply the chain rule. Thus, using \eqref{ParLVan} we infer that the shape derivative is given by
\begin{align*}
    J^{\prime}(\Omega^M)&=
    \frac{\text{d}}{\text{d}\B{\zeta}}[J((\mathrm{Id}+\B{\zeta})(\Omega^M))]_{\B{\zeta}=\B{0}}\\&=
    \frac{\text{d}}{\text{d}\B{\zeta}}[\mathcal{L}((\mathrm{Id}+\B{\zeta})(\Omega^M),\bw_{n_1}(\Omega^M),\dots,\bw_{n_l}(\Omega^M))]_{\B{\zeta}=\B{0}}
\end{align*}
Applying the formulas for shape derivatives in \cite[Lemma 2.3]{Allaire}, we deduce
\begin{align*}
    \begin{aligned}
		J^{\prime}(\Omega^M)(\B{\theta})
		&= \sum_{r=1}^{l}\Bigg\{ [\partial_{\lambda_{n_r}}\hspace{-0.7ex}\Psi](\lambda_{n_1}(\Omega^M),\dots,\lambda_{n_l}(\Omega^{M}))
		\\[-1ex]
		&\qquad\qquad \cdot\Bigg[
					\int_{\partial\Omega^M}
			    	\mathbb{C}^M\mathcal{E}(\B{w}_{n_r}(\Omega^M)):\mathcal{E}(\B{w}_{n_r}(\Omega^M))
						\B{\theta}\cdot\B{n}_{\partial\Omega^M}
					\textup{\,d}\HH^{d-1}\;
		\\[-1ex]
		&\qquad\qquad\qquad       
					- \lambda_{n_r}(\Omega^M)
						\int_{\partial\Omega^M}
							\rho^M\big|\B{w}_{n_r}(\Omega^M)\big|^2
							\B{\theta}\cdot\B{n}_{\partial\Omega^M}
					\textup{\,d}\HH^{d-1}
		\Bigg]
	    \Bigg\}
	    \\
		&\qquad -\int_{\partial\Omega^M}
				\gamma\sigma_{MV}\,\kappa_{M}\,
			\B{\theta}\cdot\B{n}_{\partial\Omega^M}
	    \textup{\,d}\HH^{d-1},
	\end{aligned}
\end{align*}
where $\kappa_M$ denotes the mean curvature of $\partial\Omega^M$.
By the assumption ${\B{\theta}\cdot\B{n}_{\partial\Omega^M}=0}$ on $\partial\Omega^M\backslash\Gamma_{MV}$, the boundary integrals vanish on $\partial\Omega^M\backslash\Gamma_{MV}$ and we thus arrive at \eqref{WShape}.  
Note that in \cite{Allaire}, the mean curvature is defined as $\kappa=\nabla_{\partial\Omega^M}\cdot \B{n}_{\partial\Omega^M}$, whereas (in accordance with \eqref{RepCurv}) our mean curvature is given by $\kappa=-\nabla_{\partial\Omega^M}\cdot \B{n}_{\partial\Omega^M}$. This explains the negative sign of our term involving $\kappa_M$.
\end{proof}

\begin{Rem}
The preceding theorem shows that using the approach of classical shape calculus and additionally taking the volume constraint $\big|\Omega^M\big|=m$ into account, we recover the gradient equality \eqref{GMV} since the volume constraint produces a Lagrange multiplier as in our previous analysis.
This justifies our formal approach from the viewpoint of classical shape calculus since \eqref{GMV} can be interpreted as the first-order necessary optimality condition of the shape optimization problem \eqref{Pz}.
 \end{Rem}
 
%%%%%%%%%%%%%%%%%%%%%%%%%%%%%%%%%%%
%%%%%%%NUMERICAL EXAMPLES%%%%%%%%%%
%%%%%%%%%%%%%%%%%%%%%%%%%%%%%%%%%%%

\section{Numerical Examples}
\label{sec:num}

In the following, we present numerical results that illustrate the applicability of our approach to find optimal topologies.
After a brief introduction of the numerical method, we investigate the dependence of solutions on the parameter $\eps$ in Section~\ref{ssec:num:eps}. Therefore, we study a particular setting of an elastic beam that is known from literature (cf.~\cite{Allaire}).
In Section~\ref{ssec:num:beam}, we consider a joint optimization of $\lambda_1$ and $\lambda_2$ for this beam setup,
and in Section~\ref{ssec:num:beam-compl}, we investigate an extended optimization problem to not only optimize the shape and topology of this beam with respect to its first eigenvalue but also its compliance.

As in Subsection~\ref{SEC:OS} and Section~\ref{SEC:Rel}, we restrict ourselves to the case of only two phases, i.e., material and void. 
In this situation, the vector-valued phase-field $\bphi=(\varphi^1,\varphi^2)$ can be represented by a scalar order parameter 
\begin{align*}
    \varphi:=\varphi^1 - \varphi^2 \in H^1(\Omega)\cap L^\infty(\Omega),
\end{align*}
and in the Ginzburg--Landau energy $E^\eps(\varphi) = \int_\Omega \frac{\eps}{2}|\nabla \varphi|^2 +\psi(\varphi) \mathrm{\,d}x$, we choose
\begin{align*}
    \psi(\varphi) := \psi_0(s) + I_{[-1,1]}(\varphi) 
    \quad\text{with}\quad
    \psi_0(\varphi) = \tfrac 1{2\eps} (1-\varphi^2),
\end{align*}
where $I_{[-1,1]}$ is the indicator functional of the interval $[-1,1]$.
This means that $\varphi$ attains its values in $[-1,1]$, where \enquote{$1$} represents the material and \enquote{$-1$} represents the void.
The elastic tensor $\mathbb C(\varphi)$ now is defined as
\begin{align}
    \mathbb C(\varphi) \mathcal E(w) := \alpha(\varphi) \big(2 \mu\, \mathcal E(w) + \ell \trace\big(\mathcal E(w)\big)\, \mathcal I \big)
\end{align}
for Lam\'e parameters $\mu, \ell >0$ and the quadratic interpolation function $\alpha(\varphi)$ satisfying $\alpha(1) = 1$, $\alpha(-1) = \underline \alpha \eps$, and $\alpha^\prime(-1) = 0$ for some constant $\underline \alpha$.
The eigenvalue equation is given  by
\begin{align}\label{NumScale}
    -\nabla \cdot \left[\mathbb C(\varphi) \mathcal E(w)\right]= \lambda\, \beta(\varphi) \rho\, w,
\end{align}
with the quadratic interpolation function $\beta(\varphi)$ satisfying $\beta(1) = 1$, $\beta(-1) = \underline \beta \eps^2$ and ${\beta^\prime(-1) = 0}$ as well as 
an additional
density function $\rho$ that might depend on the spatial variable. If not stated differently, we use $\underline \alpha = 2\cdot10^{-4}$ and $\underline \beta = 10^{-4}$. Note that this choice of interpolation functions is exactly reflected by the choice \eqref{eq:num_choice} and the scaling choice of $k=1, l=2$ as discussed in Section~\ref{intermezzo}. More precisely we have
\begin{align}
    \begin{aligned}\label{eq:num_quad}
    \beta(\varphi)&=\beta_M\left(1-\frac{1-\varphi}{2}\right)+\underline{\beta}\eps^2\beta_V\left(\frac{1-\varphi}{2}\right)\\
    &=
    \left(1-\frac{1-\varphi}{2}\right)^2+
    \underline{\beta}\eps^2\left[-\left(\frac{1-\varphi}{2}-1\right)^2+1\right],
    \end{aligned}
\end{align}
and analogously for $\alpha$.

\paragraph{Numerical Solution Method.}
The numerical implementation is based on linear finite elements for all functions provided by the finite element package FEniCs~\cite{fenics1,fenics_book}
together with the PETSc linear algebra backend~\cite{petsc-user-ref,petsc-efficient}.
For the eigenvalue problem, we use the package SLEPc~\cite{slepc:2005}.
The optimization problem is solved by the VMPT method that is proposed in \cite{BlankRupprecht}. In our case, it can be understood as an extension of the  projected gradient method into the space $H^1(\Omega) \cap L^\infty(\Omega)$.
We refer to \cite{BlankRupprecht,GarHKL_OptNS,GarHKK_OptLapEV} for more details.

\subsection{Numerical investigation of the sharp-interface limit \texorpdfstring{$\eps \to 0$}{}}
\label{ssec:num:eps}
In this section, to illustrate the sharp-interface limit, we present numerical results for a sequence of decreasing values of $\eps$. 

We use the setup from \cite[Sec.~7.1]{Allaire} to find a cantilever beam with maximal first eigenvalue, i.e., we choose $\Psi(\lambda_1) = -\lambda_1$.
Our computational domain is given by $\Omega = (0,2)\times (0,1)$. 
The Young's modulus is $E = 1$ and Poisson's ratio is $\nu = 0.3$ leading to $\mu \approx 0.38$ and $\ell \approx 0.58$. 
We define the subset $\Omega_{\rho} = (1.9,2.0)\times (0.45,0.55)$ and set $\rho(x) = 1$ if $x \not \in \Omega_\rho$ and $\rho(x) = 100$ if $x \in \Omega_\rho$. We also fix $\varphi(x) = 1$ for all $x \in \Omega_\rho$.
The beam is supposed to be attached to the wall at the left boundary of $\Omega$, i.e., at $\Gamma_D = \{ (0,\eta)\mid \eta \in (0,1)\} \subset \partial\Omega$. This leads to the boundary condition $w = 0$ on $\Gamma_D$.
We further set $\Gamma_0 = \partial \Omega \setminus \Gamma_D$ and we fix $\gamma = 10^{-4}$ and  $\int_\Omega \varphi = 0$.

Similar as in \cite[Sec.~7.1]{Allaire}, we start our optimization process with a checkerboard type initial function given by $\varphi_0(x) = \sign\big(v(x)\big)\, \big|v\big(x\big)\big|^{0.3}$ with
$v(x) = \cos(3\pi x_1)\cos(4\pi x_2)$ for all $x\in\Omega$. 
We want to emphasize that this problem is expected to have many local minima and thus, the choice of initial function can significantly influence the shape and topology of the local minimizer found by our numerical method.

We now solve the optimization problem for a decreasing sequence of values of $\eps$. In Table~\ref{tab:num:InterfaceEps}, we present the values of $\eps$ together with
the corresponding value of the Ginzburg--Landau energy $E^\eps(\varphi) = \int_\Omega \frac{\eps}{2}|\nabla \varphi|^2 + \frac{1}{2\eps}(1-\varphi^2) \mathrm{\,d}x$ and the eigenvalue $\lambda_1$. Recall here that the values of the Ginzburg--Landau energy converge to a weighted perimeter of the shape in the sharp interface limit $\eps\to 0$.
In Figure~\ref{fig:num:beamEps}, we present the zero level lines of the (locally optimal) shapes we obtain for different values of $\eps$. Here we started with $\eps=0.08$ and used the local optimum as initial value for subsequent simulations.

\begin{table}%[]
	\centering\small
    \begin{tabular}{c*{8}{|c}}
         $\eps$ & $80 \cdot 10^{-3}$ &  $40\cdot 10^{-3}$ & $ 20\cdot 10^{-3}$ &  $10\cdot 10^{-3}$ &  $5\cdot 10^{-3}$ &  $2.5\cdot 10^{-3}$ & $1.25 \cdot 10^{-3}$\\
        \hline
        $\gamma E^\eps(\varphi)$ & 0.00117 & 0.00120 & 0.00117 & 0.00115 & 0.00114 & 0.00114 & 0.00114\\
        $\lambda_1$ & 0.01574 & 0.01626 & 0.01658 & 0.01678 & 0.01692 & 0.01699 & 0.01703
    \end{tabular}
    \caption{Scaled Ginzburg--Landau energy $\gamma E^\eps(\varphi)$ and principal eigenvalue $\lambda_1$ of the optimal beam shape for decreasing  values of $\eps$. 
    This indicates that the values $E^\eps(\varphi)$ and $\lambda_1$ converge as $\eps$ decreases.}
    \label{tab:num:InterfaceEps}
\end{table}
 
 \begin{figure}
     \centering
     \includegraphics[width=0.5\textwidth]{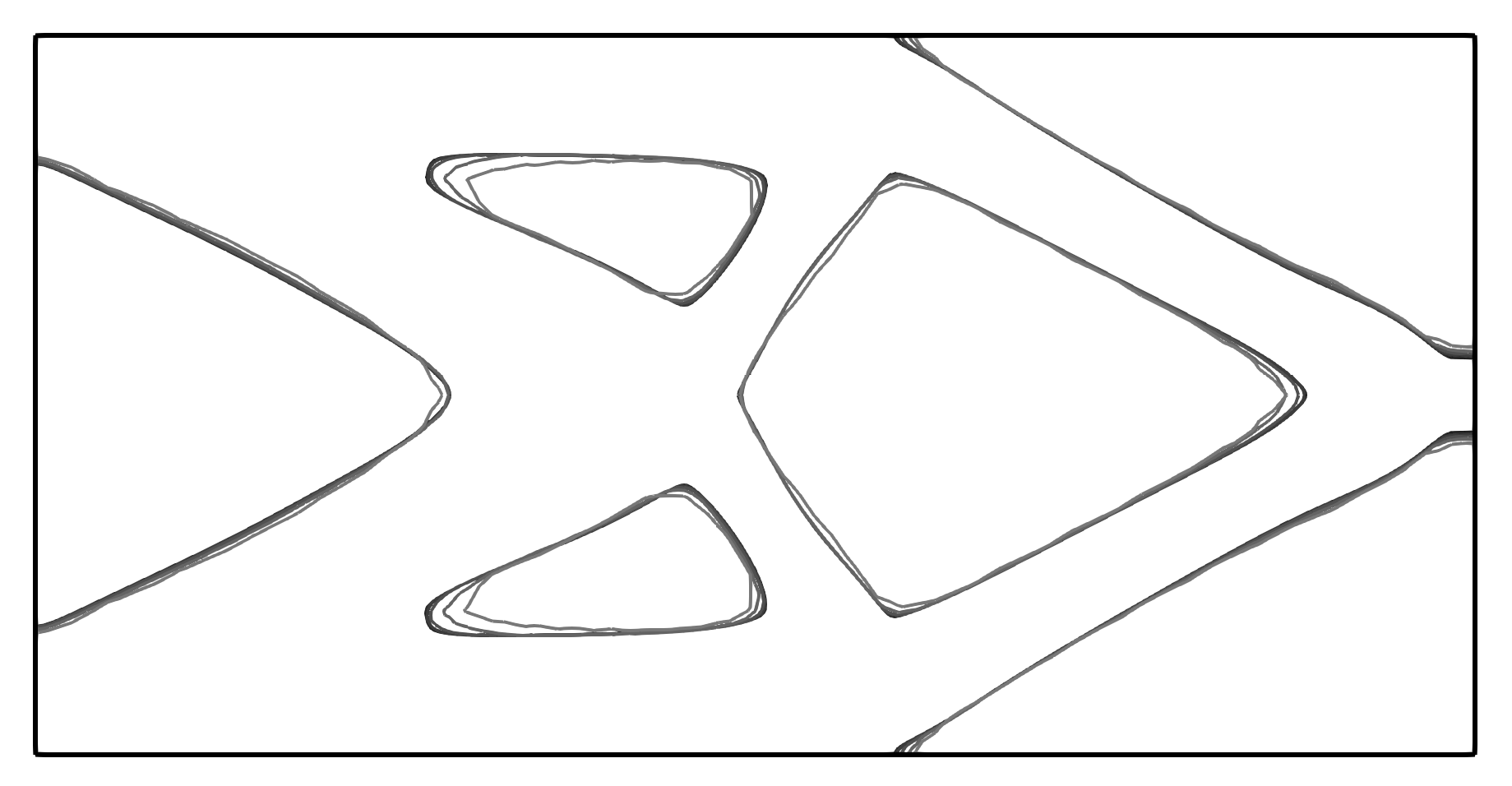}
     \caption{The zero level lines of the beam for the $\eps\to 0$ test for all tested $\eps$. 
     The darker the line is, the smaller is $\eps$.
     We observe that the interface seems to stabilize  with decreasing values of $\eps$ and that it only mildly depends on $\eps$.}
     \label{fig:num:beamEps}
 \end{figure}

\subsection{Optimization of a beam}
\label{ssec:num:beam}

As a first test, we illustrate the influence of the regularization strength $\gamma$ on the found structure.
The parameter $\gamma$ acts as a weight for the penalization of the length of the interface between void and material. 
Thus a smaller value of $\gamma$ is expected to lead to thinner structures which contain more braces. 
Using the same setup as before, we solve again the optimization problem for the cantilever beam, but this time we fix $\eps=0.02$.
We perform two simulations with $\gamma\in\{10^{-4},10^{-5}\}$.
The smaller $\gamma$ is chosen, the finer structures we expect. 
We also expect that we reach a larger value for $\lambda_1$, because less regularization is used.

In Figure~\ref{fig:num:beamGamma}, we present the found structures for these parameters. 
On the left we present the result for $\gamma=10^{-4}$ and on the right for $\gamma=10^{-5}$. 
As expected, it is clearly visible that the structure obtained for the smaller value of $\gamma$ is finer and contains more braces.
Additionally, decreasing $\gamma$ also leads to sharper corners. 

\begin{figure}
    \centering
    \fbox{
    \includegraphics[trim=200 80 200 80, clip,width=0.3\textwidth]{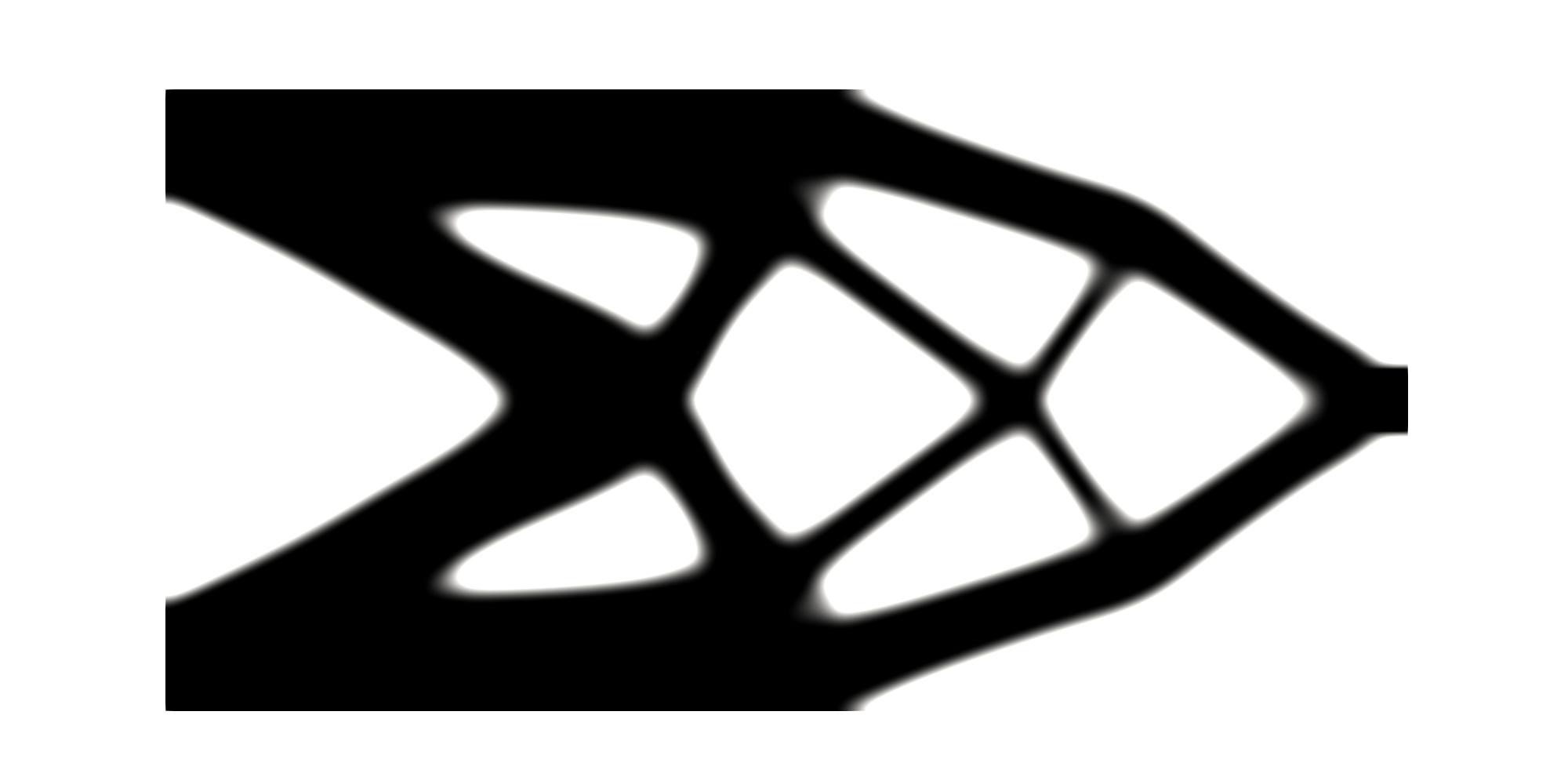}
    }
    $\qquad$
    \fbox{
    \includegraphics[trim=200 80 200 80, clip,width=0.3\textwidth]{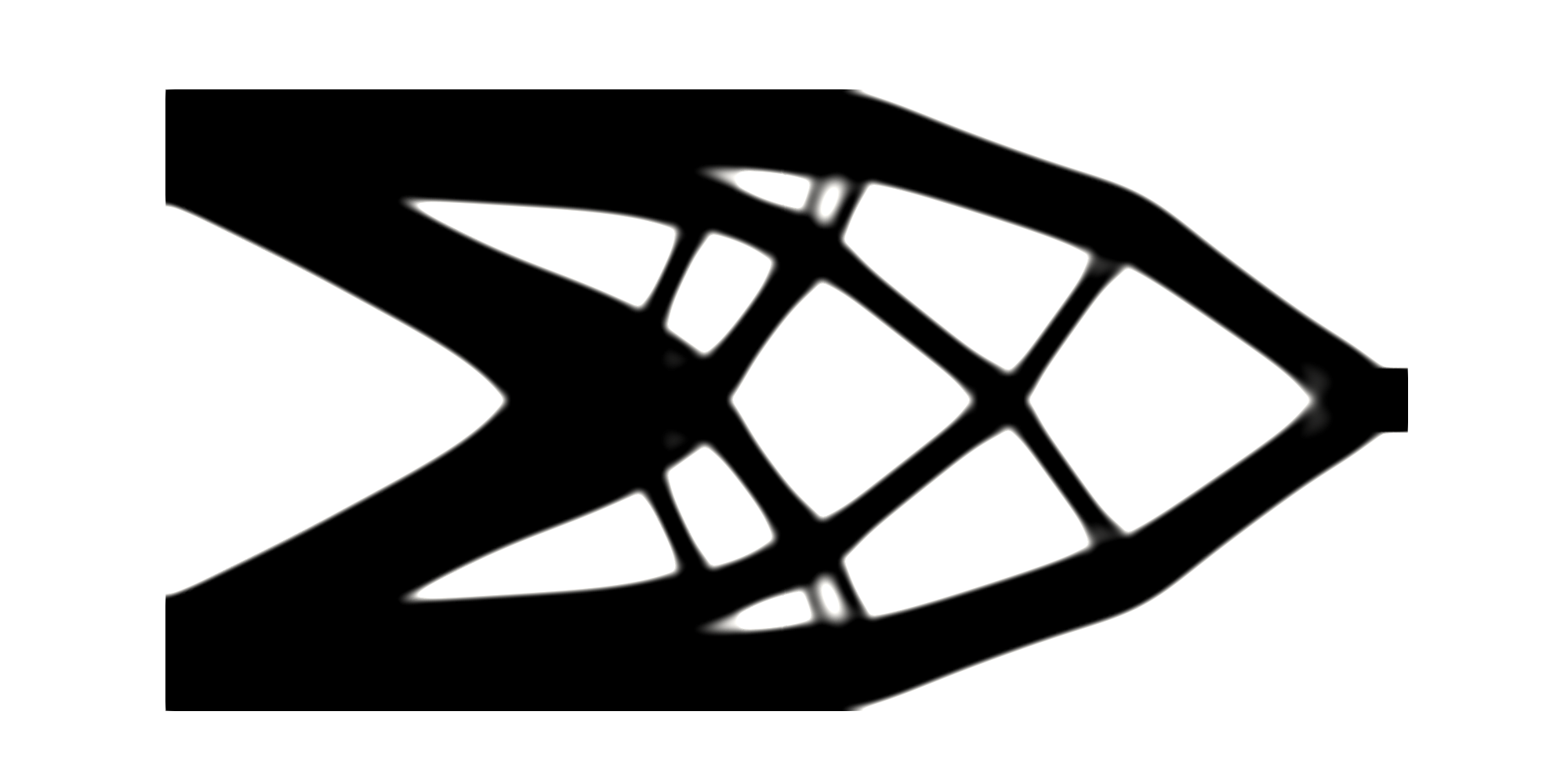}
    }
    \caption{The optimal beam for $\Psi(\lambda_1) = -\lambda_1$, i.e., maximization of the principal eigenvalue for $\gamma=10^{-4}$ (left) and $\gamma=10^{-5}$ (right) with $\eps = 0.02$. 
    We clearly observe finer structures for smaller $\gamma$. We obtain $\lambda_1 = 1.68\cdot 10^{-2}$ for $\gamma=10^{-4}$ 
    and $\lambda_1 =1.72\cdot10^{-2}$ for $\gamma=10^{-5}$. Thus, as expected, with less regularization we reach a larger value for $\lambda_1$.
    }
    \label{fig:num:beamGamma}
\end{figure}

In a second test for the beam setup, we compare the numerical results for different choices of $\Psi(\lambda_1,\lambda_2)$ as a linear combination of $\lambda_1$ and $\lambda_2$.
We set $\gamma=10^{-4}$ and use the solution shown in Figure~\ref{fig:num:beamGamma} as the initialization of the optimization method.
In Figure~\ref{fig:num:beamLam12}, we present numerical results for this setting with
the choice $\Psi(\lambda_1,\lambda_2) = -\lambda_1 - \alpha\lambda_2$ for $\alpha \in \{10^{-2},2\cdot10^{-2},6\cdot10^{-2},10^{-1}\}$.
Moreover, in Table~\ref{tab:num:beamLam12} we list the corresponding values of $\lambda_1$ and $\lambda_2$. Here, $\alpha=0$ corresponds to the result shown in Figure~\ref{fig:num:beamGamma} on the left.

\begin{figure}
    \centering
    \fbox{\includegraphics[trim=0 0 0 0,clip,width=0.23\textwidth]{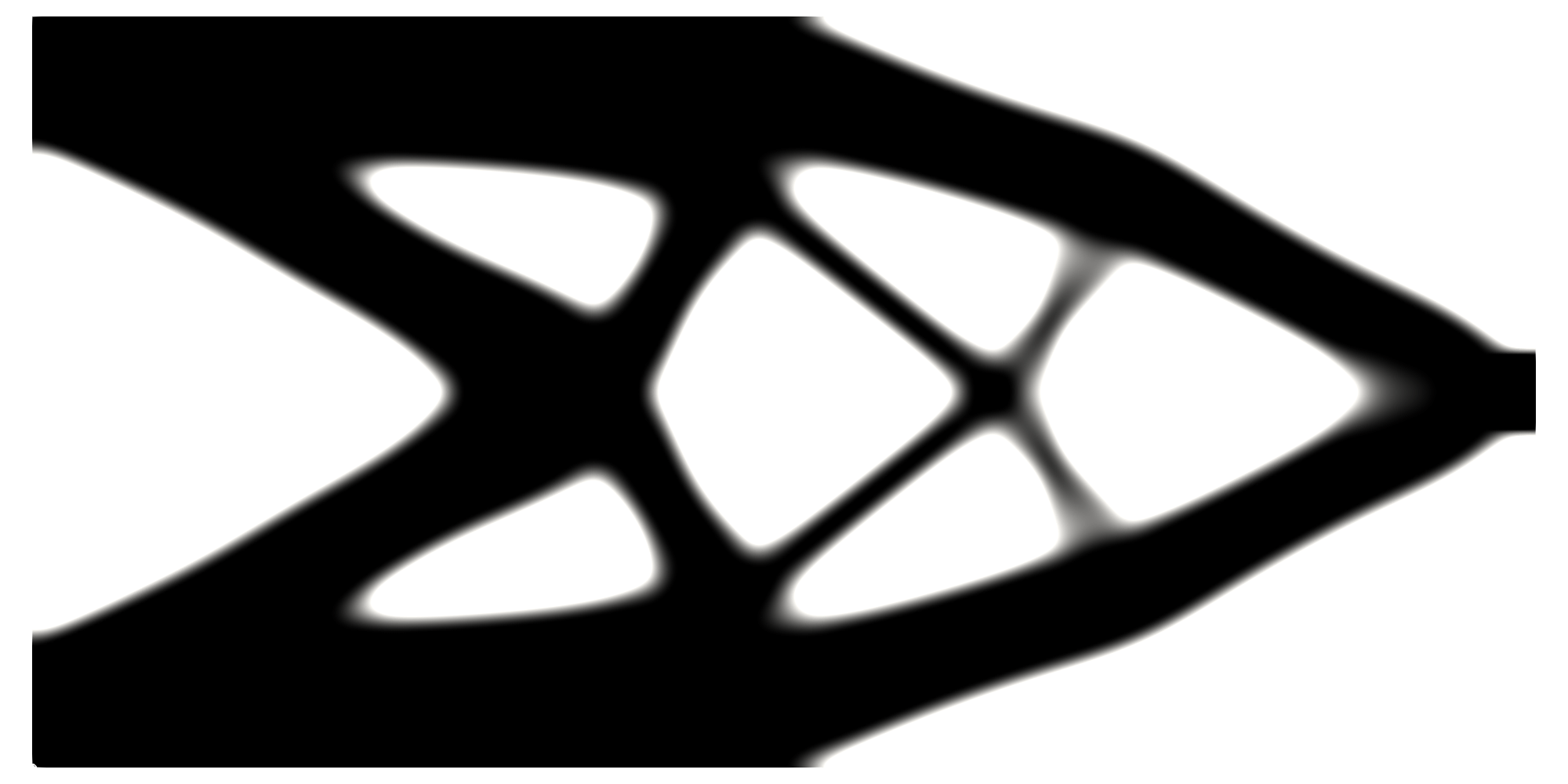}}\hfill
    \fbox{\includegraphics[trim=0 0 0 0,clip,width=0.23\textwidth]{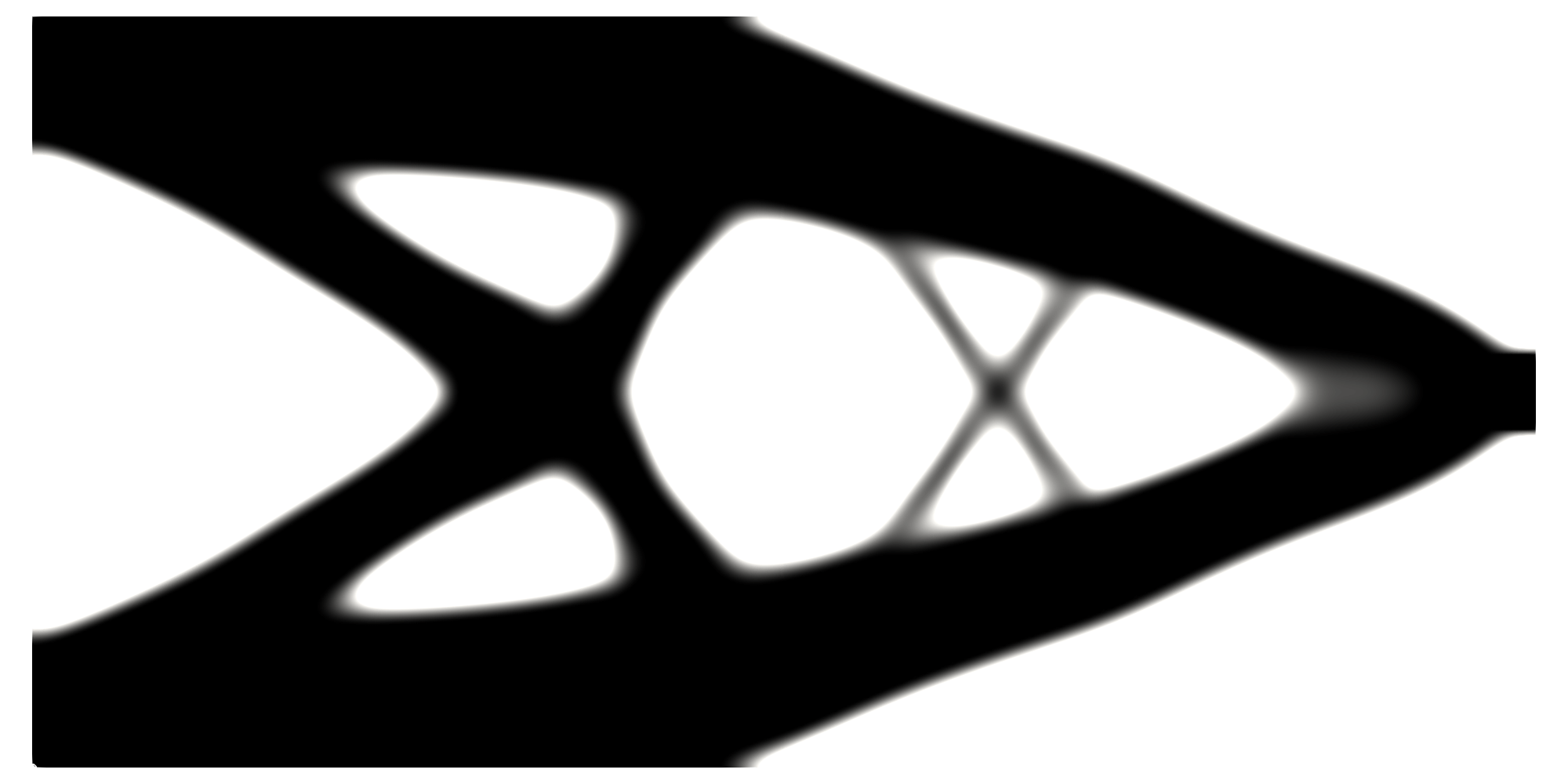}}\hfill
    \fbox{\includegraphics[trim=0 0 0 0,clip,width=0.23\textwidth]{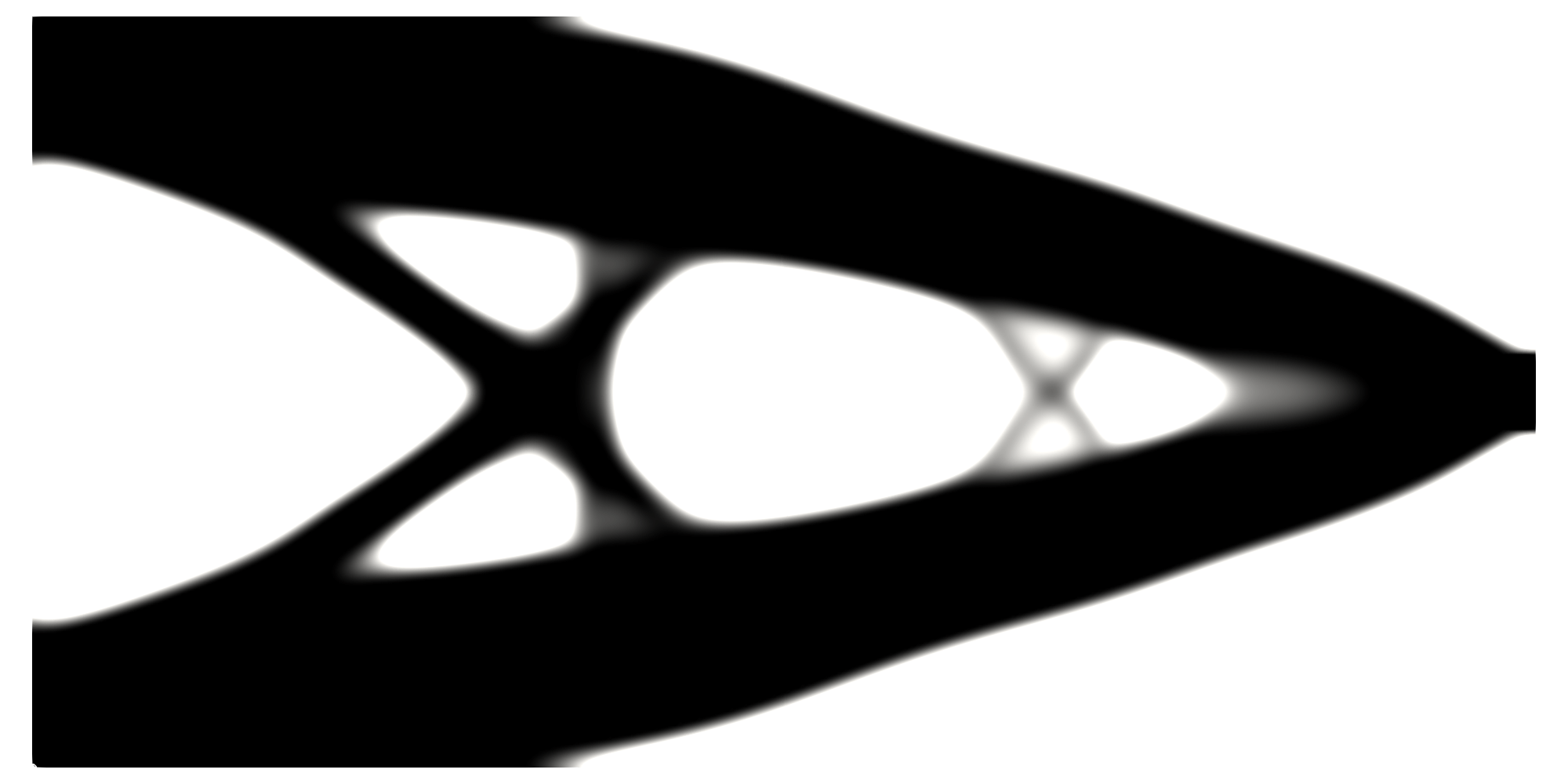}}\hfill
    \fbox{\includegraphics[trim=0 0 0 0,clip,width=0.23\textwidth]{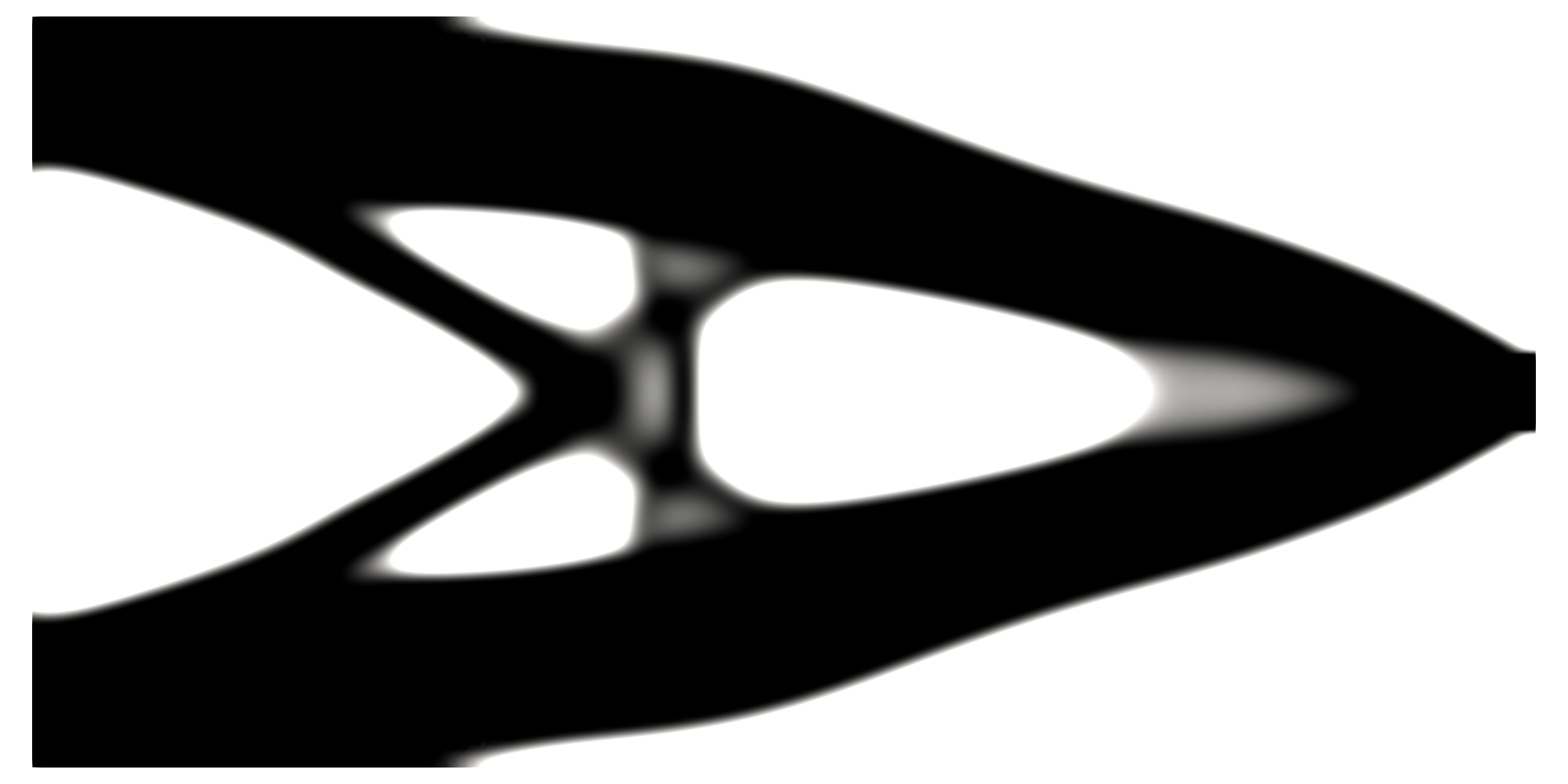}}
    
    \caption{Optimization of the cantilever beam for $\gamma = 10^{-4}$ and
    $\Psi(\lambda_1,\lambda_2) = -\lambda_1 - \alpha\lambda_2$, where
    $\alpha \in  \{ 10^{-2}, {2\cdot10^{-2}}, {6\cdot10^{-2}}, 10^{-1}\}$ (left to right). 
    We observe that increasing the weight of $\lambda_2$ above a certain value reduces the amount of fine structures. 
    }
    \label{fig:num:beamLam12}
\end{figure}

\begin{table}%[]
    \centering\small
    \begin{tabular}{c|rrrrrr}
        $\alpha$ & 0  & $1\cdot10^{-2}$  & $2\cdot10^{-2}$ & $6\cdot10^{-2}$ & $1\cdot10^{-1}$ \\
        \hline
        $\lambda_1$  & $1.677\cdot 10^{-2}$  & $1.662\cdot 10^{-2}$  & $1.606\cdot10^{-2}$ & $1.521\cdot10^{-2}$ & $1.508\cdot 10^{-2}$\\
        $\lambda_2$  & $9.181\cdot 10^{-2}$  & $11.874\cdot 10^{-2}$ & $15.178\cdot10^{-2}$ & $17.663\cdot10^{-2}$ & $18.047\cdot 10^{-2}$
    \end{tabular}
    \caption{The first and second eigenvalue ($\lambda_1$ and $\lambda_2$) for the optimal topologies for the beam example and 
    $\Psi(\lambda_1,\lambda_2) = -\lambda_1 -\alpha \lambda_2 $. 
    As expected, for larger weights $\alpha$ we reach a lower value for $\lambda_1$ 
    and a larger value for $\lambda_2$.
    }
    \label{tab:num:beamLam12}
\end{table}

\subsection{Joint optimization of compliance and principal eigenvalue}
\label{ssec:num:beam-compl}
In this subsection, we extend the problem by using a linear combination of compliance and the first eigenvalue as objective. 
For any given $\varphi \in H^1(\Omega)\cap L^\infty(\Omega)$, the compliance problem is to find a displacement field $\B{u}_c^\varphi \in H^1(\Omega;\R^d)$ satisfying
\begin{equation}
\label{eq:num:compliance}
    \begin{aligned}
    -\nabla\cdot(\mathbb C(\varphi)\mathcal E(\B{u}_c^\varphi)) &= \B{0}\quad \mbox{ in } \Omega,\\
    \B{u}_c^\varphi &= \B{0} \quad \mbox{ on } \Gamma_D \subset \partial\Omega,\\
    \left[\mathbb C(\varphi)\mathcal E(\B{u}_c^\varphi)\right] \cdot \B{n} &= \B{g} \quad \mbox{ on } \Gamma_g \subset \partial\Omega,\\
    \left[\mathbb C(\varphi)\mathcal E(\B{u}_c^\varphi)\right] \cdot \B{n} &= \B{0} \quad \mbox{ on } \Gamma_0\subset \partial\Omega,
\end{aligned}
\end{equation}
which minimizes the objective $\int_{\Gamma_g}\B{g}\cdot \B{u}_c^\varphi $. 

Combining this with our eigenvalue optimization problem for $\Psi(\lambda_1) = -\alpha\lambda_1$ for some $\alpha > 0$, we arrive at
\begin{equation}
\label{OP:COMP}
\left\{
\begin{aligned}
    &\min &&J(\varphi) =-\alpha\lambda_1^{\varphi} + \int_{\Gamma_g}\B{g}\cdot \B{u}_{\B{c}}^{\varphi} + \gamma E^\eps(\varphi)  \\
    &\mbox{over } 
    &&\varphi  \in H^1(\Omega)\cap L^\infty(\Omega),\\
    &\mbox{s.t. } 
    &&\B{u}_{\B{c}}^{\varphi} \mbox{ solves the compliance equation \eqref{eq:num:compliance}},\\
    &&&\lambda_1^{\varphi} \mbox{ is the first eigenvalue of \eqref{state}}.
\end{aligned}
\right.
\end{equation}
This means that we are looking for a structure that simultaneously minimizes the compliance with respect to a given force $\B{g}$ and maximizes 
the first eigenvalue $\lambda_1$. 
The optimization problem \eqref{OP:COMP} is actually a special case of the compliance and eigenvalue optimization problems studied in \cite{Garcke} (with the quantities therein being chosen as $N=2$, $\alpha=1$, $\beta=0$, $\B{f} \equiv \mathbf{0}$, $J_0 \equiv 0$ and $U_c = H^1(\Omega)$, i.e., $S_0=S_1=\emptyset$). More details about the formulation of the problem \eqref{OP:COMP} can be found in \cite[Section~2.7]{Garcke}. For the optimality system of \eqref{OP:COMP}, which we are going to solve numerically, we refer to \cite[Theorem~7.2]{Garcke}.

For the sharp-interface limit of compliance optimization problems \textit{without} eigenvalue optimization (such as \eqref{OP:COMP}, where $\alpha$ is set to zero), we refer to \cite{Blank}. As our sharp-interface analysis for eigenvalue optimization problems without compliance optimization relies on the same expansions as in \cite{Blank}, both approaches can be combined to formally derive the sharp-interface limit of problem \eqref{OP:COMP}.

For our numerical computations, we use the same setup as in Section~\ref{ssec:num:beam} for the beam example and fix $\gamma=1\cdot 10^{-3}$.
Moreover, the exterior force is $\B{g} = (0,-1)^T$ and acts on $\Gamma_g = \{ (2.0,y)\mid y \in (0.45,0.55)\}$. Note that $\Gamma_g$ belongs to the boundary of the domain $\Omega_\rho$ on which we assume a higher value of the density $\rho$.

In Figure~\ref{fig:num:beam-compl}, we show numerical result for this setting for different values of $\alpha$. We observe that the structures become finer when we increase the influence of the principal eigenvalue.
In Table~\ref{tab:num:compl_m_lam1}, we present the corresponding values for compliance and $\lambda_1$ for these shapes.
As expected, we achieve a larger compliance when we increase the weight $\alpha$ of the principal eigenvalue. 
Simultaneously, we also obtain larger values for the principal eigenvalue. It is worth mentioning that these results compare very well with the ones obtained in \cite{Allaire}, where a level-set method was used to directly tackle the sharp-interface problem (see especially Fig.~2 and Fig.~5 in \cite{Allaire}).

\begin{figure}
    \centering 
    \includegraphics[width=0.275\textwidth]{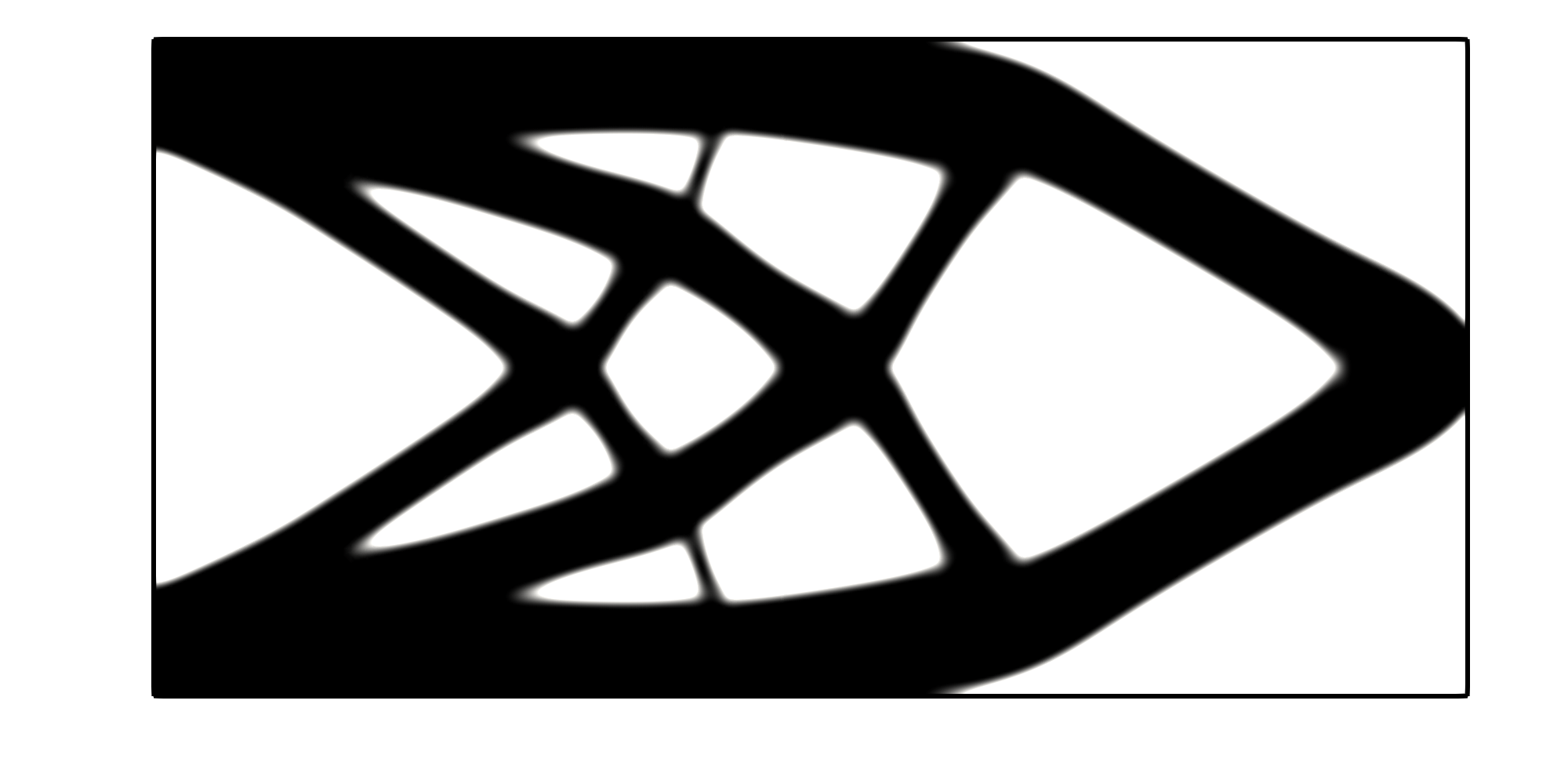}
    \includegraphics[width=0.275\textwidth]{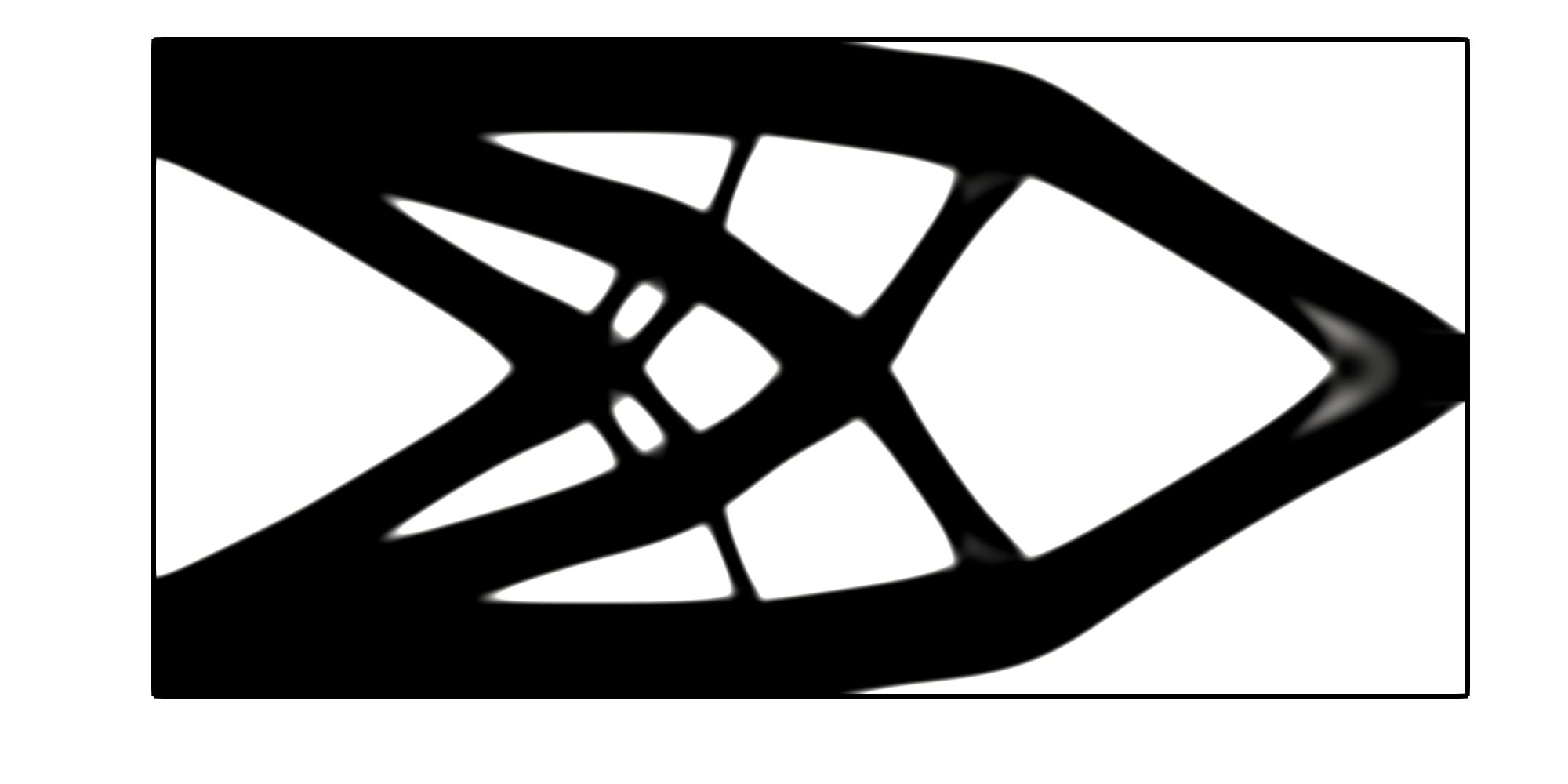}
    \includegraphics[width=0.275\textwidth]{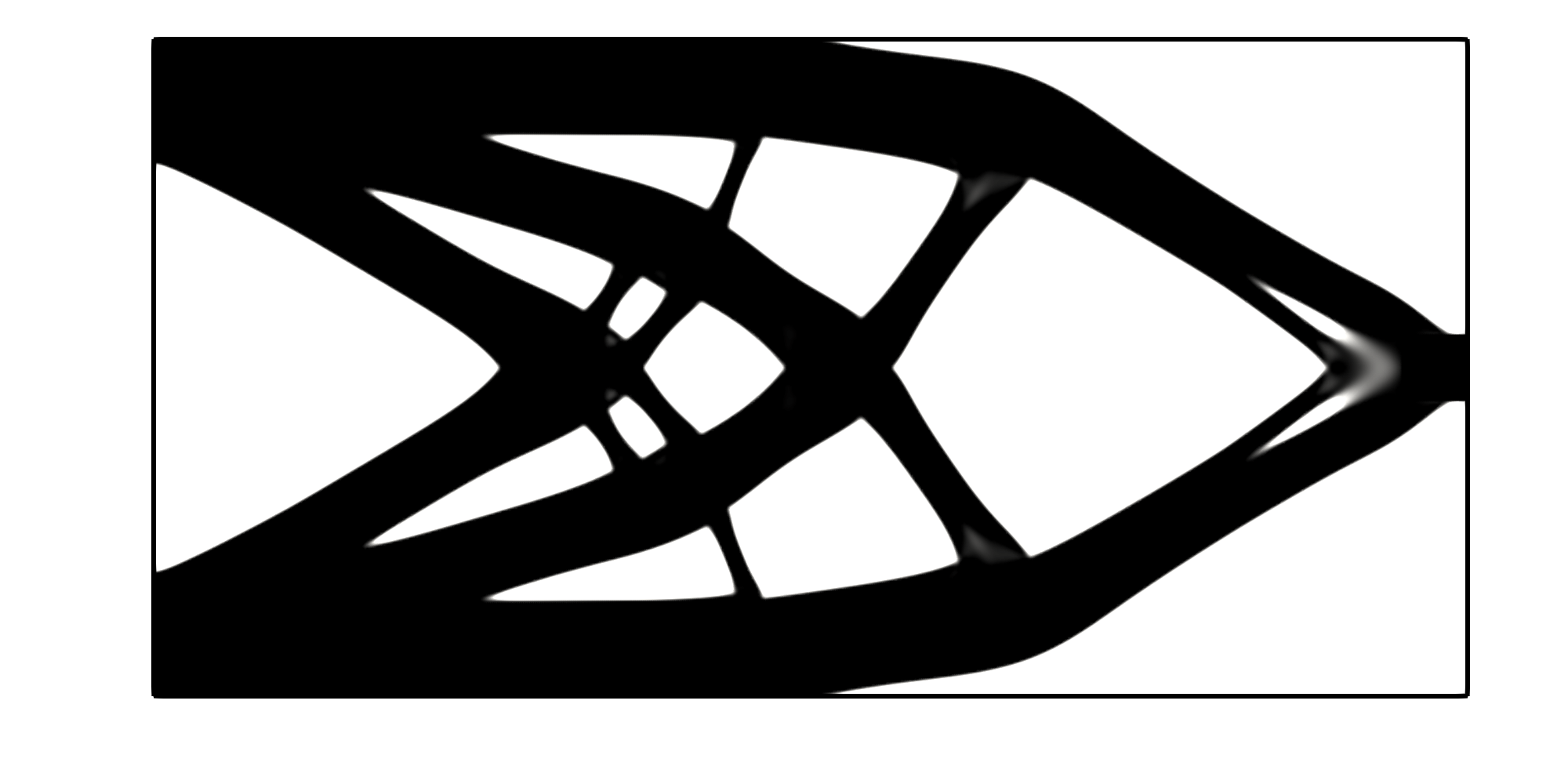}
    \caption{Numerical results for joint optimization of compliance and principal eigenvalue with weight $\alpha\in\{10,100,500\}$ (left to right).
    We observe that increasing the weight $\alpha$ of the first eigenvalue leads locally to a finer structure.}
    \label{fig:num:beam-compl}
\end{figure}

\begin{table}[h]
    \centering\small
    \begin{tabular}{c|ccccc}
        $\alpha$ &   10 & 100 & 200 & 500\\
        \hline
        Compliance  & 0.5507 & 0.5629 & 0.5676 & 0.5769\\
        \hline 
        $\lambda_1$ &  0.0164 & 0.0170 & 0.0172 & 0.0173
    \end{tabular}
    \caption{Values of compliance and principal eigenvalue $\lambda_1$ for joint optimization of compliance and principal eigenvalue 
    with weight $\alpha \in\{10,100,200,500\}$.
    We observe that increasing $\alpha$ leads, as expected, to larger values of the principal eigenvalue and larger values for the compliance.
    }
    \label{tab:num:compl_m_lam1}
\end{table}

%%%%%%%%%%%%%%%%%%%%%%%%%%%%%%%%%%%%
%%%%%%%% ACKNOWLEDGEMENT %%%%%%%%%%%
%%%%%%%%%%%%%%%%%%%%%%%%%%%%%%%%%%%%
 
\section*{Acknowledgment}
Harald Garcke, Paul H\"uttl and Patrik Knopf were partially supported by the RTG 2339 ``Interfaces, Complex Structures, and Singular Limits''
of the German Science Foundation (DFG). The support is gratefully acknowledged.
 
%%%%%%%%%%%%%%%%%%%%%%%%%%%%%%%%%%%%
%%%%%%%%%% BIBLIOGRAPHY %%%%%%%%%%%%
%%%%%%%%%%%%%%%%%%%%%%%%%%%%%%%%%%%%

\renewcommand{\sc}{\scshape}
\scriptsize
\setlength{\parskip}{0pt}
\bibliography{SIA_literature}

\end{document}